\DeclareFontShape{T1}{lmr}{bx}{sc} { <-> ssub * cmr/bx/sc }{}
\newcommand{\Lg}{\mathfrak{g}}
\newcommand{\Lie}{\mathfrak{Lie}}
\newcommand{\Lt}{\mathfrak{t}}
\newcommand{\ham}{\mathfrak{ham}}
\newcommand{\norm}[1]{\Vert #1\Vert}
\newcommand{\delz}[1]{\frac{\partial}{\partial z_{#1}}}
\newcommand{\cp}{\bar{\partial}}
\newcommand{\e}{\mathrm{e}}
\newcommand{\Hom}{\mathrm{Hom}}
\newcommand{\Aut}{\mathrm{Aut}}
\newcommand{\aut}{\mathfrak{aut}}
\newcommand{\id}{\operatorname{\text{\sf id}}}
\renewcommand{\Re}{\operatorname{Re}}
\renewcommand{\Im}{\operatorname{Im}}
\newcommand{\E}{\mathcal{E}}
\newcommand{\Oo}{\mathcal{O}}
\newcommand{\LL}{\mathcal{L}}
\newcommand{\TT}{\mathbb{T}}
\newcommand{\al}{\alpha}
\newcommand{\be}{\beta}
\newcommand{\la}{\lambda}
\renewcommand{\phi}{\varphi}
\newcommand{\ce}{\mathcal{C}^\infty}
\newcommand{\CC}{\mathbb{C}}
\newcommand{\HH}{\mathbb{H}}
\newcommand{\RR}{\mathbb{R}}
\newcommand{\Ss}{\mathbb{S}}
\newcommand{\ZZ}{\mathbb{Z}}
\newcommand{\NN}{\mathbb{N}}
\newcommand{\Ka}{K\"{a}hler}
\newcounter{Mycounter}[section]
\newcounter{lemma}[section]
\newcounter{claim}[section]
\newcounter{fact}[section]
\newcounter{sublemma}[section]
\newcounter{corollary}[section]
\newcounter{theorem}[section]
\newcounter{conjecture}[section]
\newcounter{proposition}[section]
\newcounter{definition}[section]
\newcounter{example}[section]
\newcounter{remark}[section]
\newcounter{problem}[section]
\newcounter{question}[section]
\renewcommand*{\backref}[1]{}
\renewcommand*{\backrefalt}[4]{%
    \ifcase #1 (Not cited.)%
    \or        (Cited on page~#2.)%
    \else      (Cited on pages~#2.)%
    \fi}
\def\eqref#1{(\ref{#1})}
\def\1{\sqrt{-1}\:}
\newcommand{\cntrct}                
{\hspace{2pt}\raisebox{1pt}{\text{$\lrcorner$}}\hspace{2pt}}
\newcommand*\rel@kern[1]{\kern#1\dimexpr\macc@kerna}
\newcommand*\widebar[1]{%
  \begingroup
  \def\mathaccent##1##2{%
    \rel@kern{0.8}%
    \overline{\rel@kern{-0.8}\macc@nucleus\rel@kern{0.2}}%
    \rel@kern{-0.2}%
  }%
  \macc@depth\@ne
  \let\math@bgroup\@empty \let\math@egroup\macc@set@skewchar
  \mathsurround\z@ \frozen@everymath{\mathgroup\macc@group\relax}%
  \macc@set@skewchar\relax
  \let\mathaccentV\macc@nested@a
  \macc@nested@a\relax111{#1}%
  \endgroup
}
\def\cleardoublepage{\clearpage\if@twoside \ifodd\c@page \else\hbox{}\thispagestyle{empty}\newpage
\if@twocolumn\hbox{}\newpage\fi\fi\fi} \makeatother
\title{Existence Criteria for LCK Metrics}
\author{Nicolina Istrati}
\address{Univ Paris Diderot, Sorbonne Paris Cit\'{e}, Institut de Math\'{e}matiques de Jussieu-Paris Rive Gauche, UMR 7586, CNRS, Sorbonne Universit\'{e}s, UPMC Univ Paris 06, F-75013, Paris, France}
\email{nicolina.istrati@imj-prg.fr}
\begin{document}
\begin{abstract} We investigate the relation between holomorphic torus actions on complex manifolds of LCK type and the existence of special LCK metrics. We show that if the group of biholomorphisms of such a manifold $(M,J)$ contains a non-real compact torus, then there exists a Vaisman metric on the manifold. Moreover, we show that if the group of biholomorphisms contains a compact torus whose dimension is half the real dimension of $M$, then $(M,J)$ admits an LCK metric with positive potential. As an application, we obtain a classification of manifolds of LCK type among all the manifolds having the structure of a holomorphic principal torus bundle. Moreover, we obtain new non-existence results for LCK metrics on certain products of complex manifolds. 
\end{abstract}
\maketitle

\section*{Introduction}

Locally conformally \Ka\ (LCK) metrics are natural conformal generalisations of \Ka\ metrics. Namely, a Hermitian metric on a complex manifold $(M,J)$ with fundamental form $\Omega$ is LCK if $d\Omega=\theta\wedge\Omega$ for some closed form $\theta$, called the Lee form. On the minimal cover of $M$ on which the pullback of $\theta$ becomes exact, given by $p:\hat M\rightarrow M$  with  $p^*\theta=d\phi$, $\phi\in\ce(\hat M,\RR)$, there exists a global \Ka\ metric $\Omega_K=\e^{-\phi}p^*\Omega$, and $(\hat M ,J, \Omega_K)$ is called the minimal \Ka\ cover of the LCK structure.

Any LCK metric on a manifold of \Ka\ type is globally conformal to a \Ka\ metric (\cite{v80}). For this reason, we will always assume tacitly that our manifolds are not of \Ka\ type, in order to study only strict LCK metrics. In this setting, a first obstruction appears for manifolds of LCK type, cf. \cite{v80} and \cite{g76}, namely: $0<b_1<2h^{0,1}$, where $h^{0,1}=\dim_\CC H^1(M,\Oo_M)$ and $b_1=\dim_\RR H^1(M,\RR)$. As a matter of fact, this is the only cohomological obstruction known for a general LCK manifold. Vaisman had conjectured that such a manifold should always have $b_{2k+1}$ odd for some $k\in\NN$, however this was disproved by the Oeljeklaus-Toma manifolds \cite{ot05}.

There are a few special LCK metrics which are better understood. The most important one is a \textit{Vaisman metric}, defined by the condition $\nabla^g\theta=0$, where $\nabla^g$ is the Levi-Civita connection determined by $g$. 
It can be seen that a Vaisman metric $(\Omega,\theta)$ on $(M,J)$ has the form 
\begin{equation}\label{potentialone}
\Omega=-adJ\theta+\theta\wedge aJ\theta, \ \ \ a\in\RR_{>0},
\end{equation} 
and the corresponding \Ka\ metric on $\hat M$ satisfies $\Omega_K=dd^c(a\e^{-\phi})$. Thus $\Omega_K$ has a positive potential. This was first noted by Verbitsky \cite{ve}, and as a consequence Ornea-Verbitsky \cite{ov10} introduced and started the study of the more general notion of a \textit{LCK metric with (positive) potential}. These are LCK metrics whose \Ka\ form on $\hat M$ satisfies 
\begin{equation}\label{potential}
\Omega_K=dd^c(p^*f\e^{-\phi}), \ \ \ f\in\ce(M,\RR). 
\end{equation} 
This class of metrics has the advantage of being closed under small deformations of the complex structure (\cite{ov10}, \cite{go14}), while the Vaisman manifolds are not (see \cite{b00}). Even more general than this is the notion of an \textit{exact LCK metric}, which is an LCK metric whose \Ka\ metric has the form:
\begin{equation}\label{exact}
\Omega_K=d(\e^{-\phi}\eta), \ \ \ \eta\in\E^1(M,\RR).
\end{equation}

The main objective of the present paper is to study the relation between the existence of special LCK metrics on a compact complex manifold and the group of biholomorphisms of the manifold. It turns out that this problem translates into certain properties of a compact torus acting homolorphically on the manifold. In order to give the precise statements, let us first specify these properties.

Let $(M,J)$ be a compact complex manifold and let $\TT\subset\Aut(M,J)$ be a compact torus of biholomorphisms of $M$, with Lie algebra $\Lt\subset\ce(TM)$. Moreover, let $[\theta]\in H^1(M,\RR)$ and let $\hat M_{[\theta]}$ be the minimal cover of $M$ on which $\theta$ becomes exact.
\begin{definition}\label{defTor}\begin{enumerate}
\item We say that the torus is \textit{purely real} if $\Lt\cap J\Lt=0$.
\item We say that $\TT$ is \textit{horizontal} with respect to $[\theta]$ if the action of $\TT$ on $M$ lifts to an action of $\TT$ on $\hat M_{[\theta]}$. Otherwise, we say that it is \textit{vertical} with respect to $[\theta]$.
\end{enumerate}
\end{definition}

We then have the following results:

\begin{theorem}\label{thVa}
Let $(M,J)$ be a compact complex manifold of LCK type, and let $\TT\subset\Aut(M,J)$ be a torus which is not purely real. Then $(M,J)$ admits a Vaisman metric.
\end{theorem}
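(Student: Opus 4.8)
The plan is to locate the Lee and anti-Lee directions of a Vaisman metric \emph{inside} the given torus, and to certify the metric by realising the minimal \Ka\ cover as a \Ka\ cone.

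\smallskip
\noindent\textbf{Step 1 (invariant reduction and the Lee morphism).} Since $\TT$ is connected it acts trivially on $H^*(M,\RR)$, so the Lee class is $\TT$-invariant; averaging an arbitrary LCK metric over the compact group $\TT$ yields a $\TT$-invariant LCK pair $(\Omega,\theta)$ with $\theta$ an invariant closed Lee form. (The LCK equation survives the averaging because $d(g^*\Omega)=\theta\wedge g^*\Omega$ for every $g\in\TT$ once $\theta$ is invariant.) For $X\in\Lt$, Cartan's formula with $d\theta=0$ and $L_X\theta=0$ gives $d(\theta(X))=L_X\theta=0$, so $\theta(X)$ is a constant; this defines the \emph{Lee morphism} $\ell\colon\Lt\to\RR$, $\ell(X)=\theta(X)$. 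On the minimal cover $p\colon\hat M\to M$ with $p^*\theta=d\phi$ and \Ka\ form $\omega=e^{-\phi}p^*\Omega$, the lift $\hat X$ of $X$ satisfies $\hat X\phi=\ell(X)\circ p$ and hence $L_{\hat X}\omega=-\ell(X)\,\omega$. Thus $\ell(X)=0$ makes $\hat X$ a holomorphic \emph{isometry} of $(\hat M,\omega)$, while $\ell(X)\neq0$ makes $\hat X$ a genuine (non-isometric) \emph{homothety} field.

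\smallskip
\noindent\textbf{Step 2 (the non-real direction is vertical).} Pick $0\neq\xi\in\Lt\cap J\Lt$ and set $V=\langle\xi,J\xi\rangle\subset\Lt$, a $J$-invariant line, with $\widehat{J\xi}=J\hat\xi$. The key claim is $\ell|_V\neq0$. Suppose not: then $\hat\xi$ and $J\hat\xi$ are both holomorphic Killing fields of $(\hat M,\omega)$. Writing $A=\nabla\hat\xi$, the Killing condition makes $A$ skew ($A^t=-A$), holomorphicity of $\hat\xi$ gives $AJ=JA$, and the Killing condition for $J\hat\xi$ makes $JA$ skew; but with $J^t=-J$ one computes $(JA)^t=AJ=JA$, so $JA=-JA$, i.e.\ $A=\nabla\hat\xi=0$. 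Hence $\hat\xi$ would be a nonzero \emph{parallel} field, of constant $\omega$-norm, and deck-invariant (being the lift of a field on $M$). This is impossible: the deck group $\Gamma$ acts by homotheties $\gamma^*\omega=\rho(\gamma)\omega$ with $\rho$ nontrivial (as $M$ is not of \Ka\ type), so $|\hat\xi|^2_\omega(\gamma x)=\rho(\gamma)|\hat\xi|^2_\omega(x)$, while deck-invariance gives $|\hat\xi|^2_\omega(\gamma x)=|\hat\xi|^2_\omega(x)$, forcing $\rho\equiv1$. Therefore $\ell|_V\neq0$.

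\smallskip
\noindent\textbf{Step 3 (homothety, potential, and conclusion).} The kernel of the nonzero functional $\ell|_V$ is a line; choose $\xi$ spanning it, so $\ell(\xi)=0$ and necessarily $c:=\ell(J\xi)\neq0$. Then $\hat\xi=-J\,\widehat{J\xi}$ is holomorphic Killing, while $Y:=\widehat{J\xi}$ is a holomorphic homothety field, $L_Y\omega=-c\,\omega$. In particular $\omega=-\tfrac1c\,d\,\iota_Y\omega$ is exact, and, $Y$ being holomorphic and homothetic, it is the gradient of a potential exhibiting a global \Ka\ potential on $\hat M$; equivalently $M$ carries an LCK metric with positive potential, its \Ka\ cover being a \Ka\ cone with Euler field $Y$ and Reeb field $\hat\xi$. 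Finally $\hat\xi$, having $\ell(\xi)=0$, descends to a holomorphic Killing field on $M$ whose flow is the anti-Lee flow of the resulting structure; by the structure theory of LCK metrics with potential admitting this extra Killing symmetry, $(M,J)$ is Vaisman, the metric taking the normal form \eqref{potentialone}, which completes the proof.

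\smallskip
\noindent The main obstacle is Step~2: ruling out the possibility that the non-real torus lifts to isometries of the \Ka\ cover. The algebraic ``$A=0$'' identity combined with the nontriviality of the homothety character of $\Gamma$ is exactly what makes the \emph{non-real} hypothesis indispensable --- for a purely real torus no $J$-partner lies in $\Lt$, the parallel-field argument never triggers, and such tori indeed need not produce any homothety. The remaining points --- passing from the homothety field to an honest global \Ka\ potential and reading off the cone/Vaisman normal form --- are then routine, and I would either invoke the standard characterisation of Vaisman metrics through \Ka\ cones or write \eqref{potentialone} out explicitly and verify $\nabla^g\theta=0$.
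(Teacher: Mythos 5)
Your Steps 1 and 2 are correct, and Step 2 takes a genuinely different route from the paper: the paper proves the same key claim entirely downstairs, computing for a $\TT$-invariant pair $(\Omega,\theta)$ that $\theta(\xi)=\theta(J\xi)=0$ would force $0=\iota_{[\xi,J\xi]}\Omega=d_\theta(-\norm{\xi}^2)$, and then invoking the injectivity of $d_\theta$ on functions (\ref{ldtheta}, valid since $\theta$ is not exact); you instead pass to the \Ka\ cover and kill the pair of Killing lifts by the skew/symmetric identity $\nabla\hat\xi=0$ together with the nontriviality of the homothety character $\rho$. Both arguments are sound, and up to this point your proof runs parallel to the first half of the paper's \ref{purr}.

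The genuine gap is Step 3. Once $\ell(\xi)=0$ and $c:=\ell(J\xi)\neq0$ are secured, one still has to \emph{produce} a Vaisman metric, and that is the actual content of the theorem; in the paper it is the entire second half of the proof of \ref{purr}: rescale conformally to $\Omega':=\Omega/\norm{\xi}^2$, show via Cartan's formula that the homothety direction is the Lee field of $\Omega'$ (i.e.\ $\iota_C\Omega'=J\theta'$), and conclude $\nabla\theta'=0$ from ``Killing $+$ closed''. Your Step 3 replaces this with assertions that are either unjustified as stated or presuppose the conclusion: (i) ``$Y$ being holomorphic and homothetic, it is the gradient of a potential'' does not follow from those two properties (adding a non-gradient Killing field to $Y$ preserves both, but destroys the gradient property); what is true is $\iota_{\hat\xi}\omega=-\tfrac1c\,d\norm{\hat\xi}^2_\omega$, and this uses that $J Y$ is Killing, via a short computation you never make; (ii) ``its \Ka\ cover being a \Ka\ cone'' is the \emph{conclusion} of the structure theory of Vaisman manifolds, so it cannot serve as an intermediate step without separately establishing properness of the potential, which is essentially what is to be proved; (iii) ``by the structure theory of LCK metrics with potential admitting this extra Killing symmetry'' names no theorem --- the precise statement needed is the Kamishima--Ornea criterion \cite{ko} (exactly the result this theorem generalizes), or else the paper's own \ref{potV}. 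Note also that you never exhibit the conformal representative that is Vaisman: it is not the averaged $\Omega$ but $\Omega/\norm{\xi}^2$ up to a constant, and without this rescaling the promised verification of $\nabla^g\theta=0$ cannot even be set up. The gap is fillable along your own lines --- from $\LL_{\hat\xi}\iota_Y\omega=0$ one gets $\omega=dd^c\bigl(\norm{\hat\xi}^2_\omega/c^2\bigr)$, this potential has the same $\Gamma$-equivariance as $\omega$, so $\omega/\norm{\hat\xi}^2_\omega$ descends to an LCK metric with constant potential whose Lee field is a constant multiple of $J\xi$, hence holomorphic, and \ref{potV} then yields Vaisman --- but this computation, or a citation of \cite{ko} with its hypotheses checked, is precisely the missing heart of the proof.
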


This result generalizes a criterion of Y. Kamishima and L. Ornea in \cite{ko} for deciding whether a given LCK conformal class is Vaisman or not, in terms of the presence of a complex Lie group with certain properties. As a corollary of the proof of our result, we obtain an obstruction to the existence of a general LCK metric:

\begin{corollary}\label{notLCK}
Let $(M,J)$ be a compact complex manifold. If there exists a torus $\TT\subset\Aut(M,J)$ so that $\dim_\RR(\Lt\cap J\Lt)>2$ then $(M,J)$ admits no LCK metrics.
\end{corollary}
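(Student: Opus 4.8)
The plan is to argue by contradiction, using Theorem~\ref{thVa} to promote the hypothetical LCK metric to a Vaisman one, and then showing that the Vaisman structure caps the real dimension of $\La:=\Lt\cap J\Lt$ at $2$. So suppose $(M,J)$ admits an LCK metric. Since $\La$ is $J$-invariant its real dimension is even, so the hypothesis $\dim_\RR\La>2$ means $\dim_\RR\La\geq 4$; in particular $\La\neq 0$, i.e.\ $\TT$ is not purely real. By Theorem~\ref{thVa}, $(M,J)$ then carries a Vaisman metric which, being manufactured out of the $\TT$-action, we may take to be $\TT$-invariant (alternatively, average a Vaisman metric over the compact group $\TT$, using that the connected group $\TT$ acts trivially on $H^1(M,\RR)$ so that the Lee class is preserved). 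Fix such an invariant Vaisman metric, with fundamental form $\Omega$, Lee form $\theta$, and anti-Lee form $J\theta$, written as in \eqref{potentialone}; set $\Omega_0:=\Omega-a\,\theta\wedge J\theta=-a\,d(J\theta)$ for the transverse form, whose kernel at each point is the canonical $2$-plane $\mathcal{D}=\langle\theta^\sharp,J\theta^\sharp\rangle$.

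The heart of the matter is a moment-map computation showing $\La\subseteq\mathcal{D}$ pointwise. Let $X\in\Lt$. As $X$ is a holomorphic Killing field it preserves $\Omega$, $J$, and hence the Lee form, so $L_X\theta=0$; since $\theta$ is closed, $\theta(X)$ is constant on $M$. Likewise $L_X(J\theta)=0$, and from $0=L_X(J\theta)=d\,i_X(J\theta)+i_X\,d(J\theta)$ together with $d(J\theta)=-\tfrac1a\Omega_0$ one obtains the identity $i_X\Omega_0=a\,d\big((J\theta)(X)\big)$; that is, $(J\theta)(X)$ is a Hamiltonian for $X$ with respect to $\Omega_0$. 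Now take $X\in\La$, so that $JX\in\Lt$ as well. Then $(J\theta)(X)=\theta(JX)$ is constant by the preceding remark applied to $JX$, whence $i_X\Omega_0=a\,d\big(\theta(JX)\big)=0$. Thus every $X\in\La$ is annihilated by $\Omega_0$, i.e.\ $X_p\in\ker\Omega_0|_p=\mathcal{D}_p$ for all $p\in M$.

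It remains to convert this pointwise bound into a dimension bound, and here I would invoke effectiveness of the action. Since $\TT\subset\Aut(M,J)$ acts effectively on the connected manifold $M$, its principal isotropy is trivial: being abelian, $\TT$ has all principal isotropy subgroups equal to a single subgroup $H$, which therefore fixes the dense principal stratum and hence all of $M$, forcing $H=\{e\}$. Consequently, at any principal point $p$ the evaluation $\mathrm{ev}_p\colon\Lt\to T_pM$, $X\mapsto X_p$, is injective. Restricting to $\La$ and using $\mathrm{ev}_p(\La)\subseteq\mathcal{D}_p$ with $\dim_\RR\mathcal{D}_p=2$ gives $\dim_\RR\La\leq 2$, contradicting $\dim_\RR\La\geq 4$. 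Therefore no LCK metric can exist on $(M,J)$.

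The delicate step is the reduction to a $\TT$-invariant Vaisman metric in the first paragraph: one must ensure that the averaging, or the metric produced in the proof of Theorem~\ref{thVa}, lands again inside the Vaisman class and not merely among general LCK metrics, since it is precisely the rigidity of the canonical foliation (the well-defined transverse form $\Omega_0$ with $\ker\Omega_0=\mathcal{D}$) that powers the moment-map identity. Once invariance is secured, the identity $i_X\Omega_0=a\,d\big((J\theta)(X)\big)$ and the principal-orbit argument are routine.
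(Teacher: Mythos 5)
Your core argument is sound, and it takes a genuinely different route from the paper's. The paper obtains the corollary in one line from \ref{purr}: average an LCK metric over $\TT$ so that $\TT\subset\Aut_0(M,J,[\Omega])$; since $\dim_\RR(\Lt\cap J\Lt)>2>0$ the torus is not purely real, and \ref{purr} --- whose proof uses \ref{ldtheta} to normalize $\theta(C)=1$, $\theta(JC)=0$, performs the explicit conformal change $\Omega'=\frac{1}{f}\Omega$ with $f=\norm{C}^2_\Omega$, and then invokes uniqueness of the Vaisman metric in its conformal class --- yields the exact equality $\Lt\cap J\Lt=\RR\{A,B\}$, contradicting the hypothesis. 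You instead prove only the inequality $\dim_\RR(\Lt\cap J\Lt)\leq 2$, by a different mechanism: the kernel of the transverse form $\Omega_0=-a\,dJ\theta$ of a $\TT$-invariant Vaisman metric is exactly the canonical rank-two distribution; your Cartan-formula identity $\iota_X\Omega_0=a\,d\bigl((J\theta)(X)\bigr)$, combined with the constancy of $\theta(JX)$ for $X\in\Lt\cap J\Lt$, traps this subalgebra pointwise in that kernel; and triviality of the principal isotropy of an effective torus action converts the pointwise containment into the dimension bound. These steps are all correct (your identity is a close cousin of the computation in \ref{isotropic}); what your route buys is independence from the Gauduchon-type uniqueness theorem, at the price of the principal orbit theorem, while the paper's route yields the stronger conclusion $\Lt\cap J\Lt=\RR\{A,B\}$, which it reuses elsewhere (e.g.\ in \ref{principal}).

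The one step that does not stand on its own --- and which you rightly flagged --- is the production of a \emph{$\TT$-invariant} Vaisman metric. The statement of \ref{thVa} gives no invariance, and your fallback of averaging a Vaisman metric over $\TT$ genuinely fails: the pullbacks $t^*\Omega_V$ are Vaisman with varying Lee forms $t^*\theta_V$, so their average need not even be LCK, and the Vaisman condition, being metric rather than conformal, is destroyed by the conformal rescalings one would use to fix the Lee form; averaging is legitimate only at the level of LCK structures, after first averaging the Lee form and conformally normalizing (this is exactly how the paper produces the invariant LCK metric it feeds into \ref{purr}). The invariance you need is nevertheless true, and two honest patches exist: (i) inspect the proof of \ref{thVa}, i.e.\ of \ref{purr} --- the Vaisman metric there is $\frac{1}{f}\Omega$ with $\Omega$ a $\TT$-invariant LCK form and $f=\norm{C}^2_\Omega$ for $C\in\Lt$, and since $\TT$ is abelian both $C$ and $f$, hence $\frac{1}{f}\Omega$, are $\TT$-invariant; or (ii) use that this Vaisman metric lies in the $\TT$-invariant conformal class, so $t^*\Omega_V=c(t)\Omega_V$ by uniqueness of Vaisman metrics up to positive constants, and the continuous homomorphism $c:\TT\rightarrow\RR_{>0}$ is trivial because $\TT$ is compact. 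With either patch inserted, your proof is complete.
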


Moreover, we give an alternative proof (Section~\ref{ePotential}) of the following result of Ornea-Verbitsky, in which we construct explicitely a positive potential by means of an ODE:

\begin{theorem}(\cite{ov12} and \cite{ov17})\label{crPotential}
Let $(M,J)$ be a compact complex manifold, and let $\tau\in H^1(M,\RR)$ be the de Rham class of a Lee form of an LCK structure on $(M,J)$. If there exists $\Ss^1\subset\Aut(M,J)$ which is vertical with respect to $\tau$, then there exists $\theta\in\tau$ and $\Omega$ so that $(\Omega,\theta)$ is an LCK structure with positive potential.
\end{theorem}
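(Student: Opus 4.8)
The plan is to reduce the statement to the construction of a strictly positive, automorphic Kähler potential on the minimal cover, and then to build that potential explicitly from the homothety generated by the vertical circle.

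First I would fix a generator $X\in\Lt$ of the given $\Ss^1\subset\Aut(M,J)$ and an LCK structure $(\Omega_0,\theta_0)$ with $[\theta_0]=\tau$, and pass to $\Ss^1$-invariant data. Replacing $\theta_0$ by its $\Ss^1$-average $\theta:=\int_{\Ss^1}s^*\theta_0\,ds\in\tau$ gives a closed, invariant representative, and since $L_X\theta=d(\theta(X))$ vanishes, the function $c:=\theta(X)$ is constant. The decisive preliminary point is that $\Ss^1$ is vertical with respect to $\tau$ exactly when $c\neq0$: the lift $\hat X$ to $p:\hat M\to M$ (the minimal cover $\hat M_\tau$, with $p^*\theta=d\phi$) satisfies $\hat X\phi=p^*(\theta(X))=c$, so for $c=0$ the flow of $\hat X$ preserves $\phi$ and descends to a circle action, while for $c\neq0$ the value of $\phi$ strictly increases along $\hat X$ and the action cannot lift. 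Hence $c\neq0$, and in particular $\theta(X)=c\neq0$ forces $X$ to be nowhere vanishing. Writing $\theta-\theta_0=d\rho$, the rescaled form $\Omega':=e^{\rho}\Omega_0$ is LCK with Lee form $\theta$, and because $s^*\theta=\theta$ its average $\Omega_1:=\int_{\Ss^1}s^*\Omega'\,ds$ is a positive, $\Ss^1$-invariant LCK form with the same invariant Lee form $\theta$.

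Next I would exploit the cover. With $g_1$ the metric of $\Omega_1$ and $\Omega_K=e^{-\phi}p^*\Omega_1$ the Kähler form, the invariance $L_X\Omega_1=0$ together with $\hat X\phi=c$ gives, by a direct computation, $L_{\hat X}\Omega_K=-c\,\Omega_K$, and since $\hat X$ is holomorphic also $L_{\hat X}g_K=-c\,g_K$: the lifted field $\hat X$ is a nowhere-zero holomorphic homothety of the Kähler cover. As $\Omega_K$ is closed, this yields immediately $\Omega_K=-\tfrac1c\,d(\iota_{\hat X}\Omega_K)$, so $\Omega_K$ is exact with an explicit primitive. The natural candidate potential is then $\psi:=\tfrac{1}{c^2}\,|\hat X|^2_{g_K}$, which is automatic\-ally strictly positive (as $X$, hence $\hat X$, is nowhere zero) and, since $|\hat X|^2_{g_K}=e^{-\phi}\,p^*\!\big(|X|^2_{g_1}\big)$, is automorphic with the correct character; its projected potential would be $f:=\tfrac{1}{c^2}|X|^2_{g_1}>0$ on $M$. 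The only thing to verify is the identity $dd^c\psi=\Omega_K$, which is equivalent to $\hat X=-c\,\grad_{g_K}\psi$, i.e. to $\iota_{\hat X}\Omega_K$ being $d^c$-closed. Downstairs this is the pointwise cone relation $d\big(|X|^2_{g_1}\big)=|X|^2_{g_1}\,\theta-c\,X^{\flat}$, where $X^\flat=g_1(X,\cdot)$.

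I expect this cone relation to be the main obstacle: it holds precisely when the skew part of $\nabla^{g_K}\hat X$ vanishes, which is not automatic for the merely averaged metric $\Omega_1$. To circumvent it I would not insist on $\Omega_1$ but construct the metric directly, seeking the LCK form as $\Omega:=d_\theta d^c_\theta f$ with $f\in\ce(M,\RR)$ invariant and positive. Such an $\Omega$ is real of type $(1,1)$ and automatically $d_\theta$-closed, since $d_\theta^2=-(d\theta)\wedge\,\cdot=0$; equivalently $d\Omega=\theta\wedge\Omega$, so $(\Omega,\theta)$ is an LCK structure with potential $f$ as soon as $\Omega>0$. The whole problem thus collapses to producing an invariant $f>0$ with $d_\theta d^c_\theta f>0$. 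Because all transverse directions are already controlled by $\Omega_1$ and only the single homothety (radial) direction $\phi$ must be corrected, I would take $f=h\circ u$ for a suitable invariant function $u$ built from $|X|^2_{g_1}$ and the homothety, whereupon the positivity condition $d_\theta d^c_\theta f>0$ reduces to a single ordinary differential inequality for the profile $h$, which the expanding nature of the homothety guarantees can be solved with $h>0$. Carrying out this reduction and checking both the solvability of the ODE and the strict positivity of $d_\theta d^c_\theta f$ is the heart of the argument; granting it, $f$ is the desired positive potential and $(\Omega,\theta)$ the required LCK structure, with $[\theta]=\tau$ since $\theta_0$ was modified only by the exact form $d\rho$ and by averaging.
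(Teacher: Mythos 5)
Your setup is the same as the paper's: averaging to an $\Ss^1$-invariant pair $(\Omega_1,\theta)$ with $\theta(X)=c$ constant, identifying verticality with $c\neq 0$, and observing that the lift $\hat X$ acts on the minimal \Ka\ cover as a holomorphic homothety, $\LL_{\hat X}\Omega_K=-c\,\Omega_K$, so that $\Omega_K$ is exact. Your diagnosis that $|\hat X|^2_{g_K}$ fails to be a potential for a merely averaged metric is also correct. The gap is precisely in the step you flag as the heart of the argument: the claim that positivity of $d_\theta d^c_\theta(h\circ u)$, with $u$ built from $|X|^2_{g_1}$, reduces to ``a single ordinary differential inequality for the profile $h$'' because ``all transverse directions are already controlled by $\Omega_1$''. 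Expanding,
\begin{equation*}
d_\theta d^c_\theta(h\circ u)=h''(u)\,du\wedge d^c u+h'(u)\,dd^c u-h'(u)\left(du\wedge J\theta+\theta\wedge d^c u\right)+h(u)\left(\theta\wedge J\theta-dJ\theta\right),
\end{equation*}
and one sees that $\Omega_1$ does not appear at all: the positivity in directions transverse to $X$ and $JX$ must come entirely from $h'(u)\,dd^c u$ and $h(u)(\theta\wedge J\theta-dJ\theta)$, and neither term has any positivity there for a general invariant LCK metric. The complex Hessian $dd^c u$ of the invariant function $u$ can be arbitrarily negative transversally, and its coefficient $h'(u)$ cannot be suppressed: taking $h$ constant reduces the requirement to $\theta\wedge J\theta-dJ\theta>0$, which is exactly the non-automatic ``potential $1$'' condition of \eqref{potentialone}. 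So no choice of radial profile $h$ can repair the transverse directions, and the proposed scalar inequality does not capture the problem.

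The missing idea, which is how the paper's proof (Section~\ref{ePotential}) actually proceeds, is to use the flow $\Phi_t$ of $JC$ (where $C=X/c$), not only the homothety direction. On the cover one computes $\LL_{JC}^2\omega=dd^cf-\omega$ with $f=\omega(C,JC)>0$, hence $\tfrac{d^2}{dt^2}\omega_t=-\omega_t+dd^cf_t$ for $\omega_t=\Phi_t^*\omega$, $f_t=\Phi_t^*f$. Solving the linear ODE $\tfrac{d^2}{dt^2}g_t+g_t=f_t$ with $g_0=0$, $\tfrac{d}{dt}|_{t=0}g_t=0$ yields the identity $\omega_t=\cos t\,\omega+\sin t\,dJ\eta+dd^cg_t$, and averaging over $t\in[0,2\pi]$ kills the $\cos$ and $\sin$ terms, giving $\omega'=dd^cg$ with $g=\tfrac{1}{2\pi}\int_0^{2\pi}g_t\,dt$. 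Here positivity of $\omega'$ is free, since $\omega'$ is an average of pullbacks of the \Ka\ form by biholomorphisms, and positivity of $g$ follows from the ODE: at any zero of $t\mapsto g_t(x)$ the second derivative equals $f_t(x)>0$, so each $g_t(x)\ge 0$ with strict positivity on a dense set. Finally $g$ has the same $\Gamma$-equivariance as $\omega$, so $g^{-1}dd^cg$ descends to an LCK metric with positive potential and Lee class $\tau$. It is this passage to the $JC$-flow, together with the trick of obtaining positivity by averaging biholomorphic pullbacks rather than by engineering a profile function, that your outline lacks and that cannot be bypassed in the way you suggest.
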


As a corollary of these results and a previous one concerning toric LCK manifolds \cite{is17} we also obtain:

\begin{theorem}\label{maxTor}
Let $(M,J)$ be a compact complex manifold of complex dimension $n$. Suppose that the group of biholomorphisms of $(M,J)$ contains an $n$-dimensional compact torus $\TT^n$. Then, for any $\tau\in H^1(M,\RR)$ which is the class of a Lee form of an LCK metric, there exists an LCK metric with positive potential $(\Omega,\theta)$ so that $\theta\in\tau$. 
\end{theorem}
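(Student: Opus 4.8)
The plan is to split according to whether the maximal torus $\TT^n$ is horizontal or vertical with respect to $\tau$ in the sense of Definition~\ref{defTor}, treating the vertical case directly by Theorem~\ref{crPotential} and the horizontal case through the toric theory of \cite{is17}. First I would record the reduction that $\TT^n$ is vertical with respect to $\tau$ if and only if it contains a circle $\Ss^1$ which is itself vertical. Lifting the $\RR^n$-action on the universal cover to the minimal cover $\hat M_\tau$ (on which a representative $\theta\in\tau$ becomes exact), each generator of the orbit lattice $\ZZ^n\subset\RR^n$ acts as the deck transformation determined by the $\tau$-period of the corresponding orbit loop. Since this pairing is linear on $\RR^n$, it vanishes on the lattice exactly when it vanishes identically; hence the compact torus fails to lift precisely when some rational $1$-parameter subgroup has an orbit $\gamma$ with $\langle\tau,[\gamma]\rangle\neq 0$, and that subgroup closes up to a vertical circle $\Ss^1$. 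In the vertical case I then apply Theorem~\ref{crPotential} to this $\Ss^1$ directly: it produces $\theta\in\tau$ and $\Omega$ with $(\Omega,\theta)$ an LCK structure with positive potential, which is exactly the assertion.

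It remains to treat the horizontal case, where by definition the $\TT^n$-action lifts to $\hat M_\tau$. Here I would first average the \Ka\ form: writing $\Omega_K$ for the \Ka\ form on $\hat M_\tau$ of some LCK metric with Lee class $\tau$, the lifted torus acts by biholomorphisms commuting with the deck group, so $\Omega_K':=\int_{\TT^n}t^*\Omega_K\,dt$ is again a \Ka\ form, now $\TT^n$-invariant. As the generator $\psi$ of the deck group satisfies $\psi^*\Omega_K=c\,\Omega_K$ for a homothety constant $c$, commutation of $\psi$ with the torus gives $\psi^*\Omega_K'=c\,\Omega_K'$, so $\Omega_K'$ descends to a $\TT^n$-invariant LCK metric on $M$ whose Lee class, being governed by the unchanged factor $c$, is still $\tau$. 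Because $\dim_\RR\TT^n=n=\dim_\CC M$ and, by Corollary~\ref{notLCK}, $\dim_\RR(\Lt\cap J\Lt)\le 2$, this is precisely the setting of a compact toric LCK manifold. Using the infinitesimal homothety vector field $\rho$ on $\hat M_\tau$, normalized so that $L_\rho\Omega_K'=\Omega_K'$ and hence $\Omega_K'=d\iota_\rho\Omega_K'$, I would check that the invariant action is twisted Hamiltonian with moment map $\mu_\xi=\Omega_K'(\rho,\xi)$ for $\xi\in\Lt$. The structure theorem of \cite{is17} for toric LCK manifolds then yields an LCK metric with positive potential in the same conformal data, in particular with Lee class $\tau$.

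The main obstacle is the horizontal case. One must verify that the lifted half-dimensional torus genuinely endows $(M,J)$ with a toric LCK structure to which \cite{is17} applies — that the averaged action remains effective and is twisted Hamiltonian on the cover — and, most delicately, that the potential produced by \cite{is17} can be taken with Lee class exactly the prescribed $\tau$ rather than some a priori different class arising from the normalization in the toric classification. The vertical case, by contrast, is an immediate consequence of Theorem~\ref{crPotential} once the vertical circle has been extracted.
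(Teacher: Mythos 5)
Your dichotomy and your treatment of the vertical case are correct and coincide with the paper's own argument: average an LCK structure $(\Omega,\theta)$, $\theta\in\tau$, over $\TT^n$; if $\theta$ does not vanish identically on $\Lt$, then by linearity it is nonzero on one of the fundamental vector fields $\xi_k$ of the circle factors, that circle is vertical by \ref{hor}, and \ref{crPotential} finishes the proof.

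The horizontal case, however, contains a genuine gap, and it is exactly the point you yourself flag as the ``main obstacle.'' Your route requires an infinitesimal homothety $\rho$ on $\hat M_\tau$ with $\LL_\rho\Omega_K'=\Omega_K'$ in order to define the moment map $\mu_\xi=\Omega_K'(\rho,\xi)$. No such vector field exists in general: if $\rho$ exists (and is deck-invariant, as it must be for $\mu_\xi$ to descend to twisted Hamiltonian data on $M$), then $\Omega_K'=d(\iota_\rho\Omega_K')$ with $\iota_\rho\Omega_K'$ equivariant, i.e.\ the LCK metric is \emph{exact} --- and exactness is precisely what is not available a priori. (The Inoue surface $S^+$ admits LCK metrics and a holomorphic $\Ss^1$-action but no exact LCK metric, which is why the maximal-dimension hypothesis is needed at all.) A half-dimensional, effective, holomorphic, horizontal action is \emph{not} automatically twisted Hamiltonian; proving that it is, is the content of \ref{tHam}, which the paper establishes by a substantive argument: the principal orbit theorem gives a dense open set of free orbits, $\Lt\cap J\Lt=0$ yields a holomorphic $(\CC^*)^n$-action whose open dense orbit identifies $\hat M_0$ with $\TT^c/H$, the deck group $\Gamma$ is then shown to embed in $\TT^c/H$, and a one-parameter subgroup through a deck transformation descends to a \emph{vertical} $\Ss^1$-action on $M$; averaging over it produces an exact, $\TT^n$-invariant (hence toric) LCK metric, and only at that point does the classification of \cite{is17} apply, yielding a Vaisman metric --- in particular an LCK metric with constant positive potential --- with Lee class exactly $\tau$ (your worry about the class being preserved is resolved there, and in \ref{tHam} also via \cite{llmp}). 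In short: your skeleton matches the paper's, your vertical case is complete, but the horizontal case as proposed assumes the conclusion (exactness/twisted Hamiltonianity) rather than proving it.
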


We should note here that the dimension hypothesis on the torus is necessary, as the Inoue-Bombieri surface $S^+$ (\cite{in}) admits an effective holomorphic action of $\Ss^1$, but no exact LCK metric (\cite{o16}).

We also tackle the following problem, related to the above results. Any LCK metric $(\Omega,\theta)$ defines two natural vector fields $B$ and $A=JB$, the Lee and anti-Lee vector fields, via: 
\begin{equation}\label{Leedef}
\iota_A\Omega=-\theta, \ \ \ \iota_B\Omega=J\theta.
\end{equation}
It is well known that, for a Vaisman metric, these vector fields are real holomorphic. It is natural to ask whether the converse holds, or under which conditions. In the recent paper \cite{mmo18}, Ornea-Moroianu-Moroianu find additional properties ensuring that an LCK metric with holomorphic Lee vector field is Vaisman, namely: if the metric has harmonic Lee form (i.e. is Gauduchon), or if $B$ has constant norm. Moreover, they construct an example of a non-Vaisman LCK metric with Lee vector field. In the present paper, we show:

\begin{proposition}
Let $(M,J,\theta,\Omega)$ be an LCK metric of the form \eqref{potentialone} with holomorphic Lee vector field. Then $\Omega$ is Vaisman. 
\end{proposition}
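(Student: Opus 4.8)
The plan is to reduce the statement to the constant-norm criterion of Moroianu--Moroianu--Ornea \cite{mmo18}: once the Lee vector field $B$ is known to have constant length, the assumption that $B$ is holomorphic immediately forces $\Omega$ to be Vaisman by \cite{mmo18}. So the whole task becomes proving that $|\theta|^2=g(B,B)$ is constant on $M$ (which I take to be compact, as throughout the paper). The mechanism is that the special shape \eqref{potentialone} collapses $\LL_B\Omega$ to a pointwise multiple of $\Omega$, turning $B$ into a conformal vector field whose factor is governed by $|\theta|^2$ itself; a one-line integration by parts then pins $|\theta|^2$ down.

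First I would compute $\LL_B\Omega$ via Cartan's formula. Using $\iota_B\Omega=J\theta$, the LCK relation $d\Omega=\theta\wedge\Omega$, and that \eqref{Leedef} makes $B$ the metric dual of $\theta$ (so $\theta(B)=|\theta|^2$),
\[
\LL_B\Omega = d(J\theta)+\iota_B(\theta\wedge\Omega)=dJ\theta+|\theta|^2\,\Omega-\theta\wedge J\theta .
\]
Solving \eqref{potentialone} for $dJ\theta$, namely $dJ\theta=\theta\wedge J\theta-\tfrac1a\Omega$, and substituting gives the clean identity
\[
\LL_B\Omega=\Big(|\theta|^2-\tfrac1a\Big)\Omega .
\]
Since $B$ is holomorphic we have $\LL_B J=0$, so from $g=\Omega(\cdot,J\cdot)$ it follows that $\LL_B g=(|\theta|^2-\tfrac1a)g$; thus $B$ is conformal with factor $|\theta|^2-\tfrac1a$.

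Then I would extract two scalar identities. From $dV=\Omega^n/n!$ the displayed identity yields $\LL_B\,dV=n(|\theta|^2-\tfrac1a)\,dV$, while $\LL_B(|\theta|^2)=(\LL_B g)(B,B)=(|\theta|^2-\tfrac1a)|\theta|^2$ because $\LL_B B=0$. Both $\LL_B\,dV$ and $\LL_B(|\theta|^2\,dV)$ are exact (they equal $d\iota_B(\cdot)$, being top-degree), so integrating over the closed manifold $M$ gives
\[
\int_M\Big(|\theta|^2-\tfrac1a\Big)\,dV=0,\qquad \int_M|\theta|^2\Big(|\theta|^2-\tfrac1a\Big)\,dV=0 .
\]
Subtracting $\tfrac1a$ times the first identity from the second produces $\int_M(|\theta|^2-\tfrac1a)^2\,dV=0$, whence $|\theta|^2\equiv\tfrac1a$ is constant.

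Finally, constancy of $|\theta|^2$ makes $\LL_B g=0$, so $B$ is a holomorphic Killing field of constant norm, and \cite{mmo18} concludes that $\Omega$ is Vaisman. The only genuinely delicate point is the first step: checking that \eqref{potentialone} collapses $\LL_B\Omega$ to a multiple of $\Omega$, so that holomorphicity of $B$ upgrades this to conformality of $g$. After that, the vanishing of $\int_M(|\theta|^2-\tfrac1a)^2\,dV$ is automatic and the reduction to \cite{mmo18} is immediate. I would also double-check the sign conventions in \eqref{Leedef} and in $g=\Omega(\cdot,J\cdot)$, so that $\theta(B)=|\theta|^2$ and the constant $\tfrac1a$ come out consistently.
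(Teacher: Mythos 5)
Your proof is correct, but it reaches the crucial intermediate fact --- constancy of $\norm{B}$ --- by a genuinely different mechanism than the paper. Both arguments open with the same Cartan-formula computation of $\LL_B\Omega$ exploiting the special shape \eqref{potentialone}; but the paper first pins down the norm using the anti-Lee field $A=JB$: holomorphicity gives $\LL_A J\theta=0$, which unravels to $d_\theta(\norm{B}^2-1)=0$, and then injectivity of $d_\theta$ (\ref{ldtheta}, valid because strictness makes $\theta$ non-exact) forces $\norm{B}^2\equiv 1$ pointwise; only after that does the Cartan computation collapse to $\LL_B\Omega=0$. You instead keep the norm free, observe that \eqref{potentialone} makes $B$ conformal with factor $\norm{B}^2-\tfrac1a$, and kill the factor globally by integrating $\LL_B(dV)$ and $\LL_B(\norm{B}^2\, dV)$ over the closed manifold, obtaining $\int_M(\norm{B}^2-\tfrac1a)^2\, dV=0$. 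The trade-offs are real: the paper's argument never uses compactness, so it works on any connected strict LCK manifold, but it does need $\theta$ non-exact; yours needs $M$ compact without boundary (Stokes) but bypasses \ref{ldtheta} entirely and is insensitive to whether $\theta$ is exact. One small economy you miss: having shown $\LL_B g=0$, you do not need the constant-norm criterion of \cite{mmo18} at all --- a Killing Lee field already forces the Vaisman condition, since Killing makes $\nabla^g\theta$ antisymmetric while $d\theta=0$ makes it symmetric, so $\nabla^g\theta=0$; this is exactly how the paper closes, and it would keep your proof self-contained rather than resting on the theorem the proposition is meant to sharpen.
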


This criterion should be particularly useful when constructing examples, as it is easy to check. Moreover, we note that the example of \cite{mmo18} can be chosen with positive potential, so the above result is the sharpest statement one can get. Finally, let us mention that the example of non-Vaisman metric with holomorphic Lee field is constructed on a manifold of Vaisman type. Thus the question remains open whether a manifold admitting an LCK metric with holomorphic Lee field is of Vaisman type. 

As a direct application of \ref{thVa}, we obtain in Section~\ref{torusPrincipal} a classification of manifolds of LCK type among all the manifolds having the structure of a holomorphic torus principal bundle. This is analogous to the result of Blanchard \cite{bl} in the \Ka\ context.

In the last part, we discuss the issue of irreducibility in LCK geometry. From early time \cite{v80}, it was known that if one takes two compact LCK manifolds $(M_i,\Omega_i)$, $i=1,2$, the product metric is not LCK on $M_1\times M_2$. However, whether there might exist some other LCK metric on $M_1\times M_2$ has remained an open question, and in Section~\ref{anIrred}, we extend the known cases (\cite{t99}, \cite{opv}) in which this fails, as an application of \ref{thVa}. 


\textbf{Acknowledgments.} I am grateful to Andrei Moroianu for his encouragement and valuable suggestions that improved this paper.

\section{LCK metrics}

In this section, we fix a complex manifold $(M,J)$. A metric $g$ on $(M,J)$ is \textit{Hermitian} if $g(J\cdot,J\cdot)=g$. In this case, $g$ induces a fundamental form $\Omega=g(J\cdot,\cdot)$ of bidegree $(1,1)$ with respect to $J$. Conversely, a $(1,1)$-form $\Omega\in\E^{1,1}(M,\RR)$ is called positive and we write $\Omega>0$ if the symmetric tensor $\Omega(\cdot,J\cdot)=:g$ is positve definite, in which case $g$ is a Hermitian metric. This one-to-one correspondence will be used implicitly throughout the present text.

We begin with the equivalent definitions of a locally conformally \Ka\ (LCK) metric. Let $g$ be a Hermitian metric with fundamental form $\Omega$ on $(M,J)$.

\begin{definition} The metric $g$ is called LCK if one of the following equivalent facts holds: \begin{enumerate}[(a)]
\item There exists a real closed one-form $\theta$ on $M$, called \textit{the Lee form}, for which we have: 
\begin{equation}\label{LCK1}
d\Omega=\theta\wedge\Omega.
\end{equation}
\item $M$ is covered by open sets $\{U_\al\}_{\al\in I}$ so that for each $\al\in I$ there exists a \Ka\ metric $g_\al$ on $(U_\al, J)$ and a real function $\phi_\al\in\ce(U_\al,\RR)$ so that: 
\begin{equation}\label{LCK2}
g_{|_{U_\al}}=\e^{\phi_\al}g_\al.
\end{equation}
\item There exists a \Ka\ metric $\Omega_K$ on the universal cover with the induced complex structure $\pi:(\tilde M,J)\rightarrow (M,J)$  on which $\pi_1(M)$, seen as the deck group of $\pi$, acts by homotheties:
\begin{equation}\label{LCK3}
\gamma^*\Omega_K=\rho(\gamma)^{-1}\Omega_K, \ \ \ \gamma\in\pi_1(M), \ \ \rho(\gamma)\in\RR_{>0}.
\end{equation}
\end{enumerate}
\end{definition}

It is not difficult to see that indeed all the above conditions are equivalent, and for the details, one can consult the monograph \cite{do}. The Lee form is given on the open sets $U_\al$ by $\theta|_{U_\al}=d\phi_\al$. The pullback of the \Ka\ metrics $\{g_\al\}_{\al\in I}$ to $\tilde M$ glue up to a global \Ka\ metric, which corresponds precisely to $\Omega_K$. Moreover, it is easy to check that the constants given in \eqref{LCK3} form a group morphism $\rho:\pi_1(M)\mapsto (\RR_{>0},\cdot)$, $\gamma\mapsto\rho(\gamma)$. The kernel of $\rho$ is a normal subgroup of $\pi_1(M)$, so one can consider $\Gamma:=\pi_1(M)/\ker\rho$ and the corresponding Galois cover $p:\hat M\rightarrow M$ of deck group $\Gamma$. By definition, $\Omega_K$ is $\ker\rho$-invariant, so descends to a \Ka\ metric on $\hat M$. 

In fact, $\hat M$ is the minimal cover of $M$ on which the pullback of $\Omega$ is globally conformal to a \Ka\ metric. For this reason, we will call $(\hat M,\Omega_K)$ \textit{the minimal \Ka\ cover} corresponding to $\Omega$. Moreover, the pullback of $\theta$ becomes exact on $\hat M$ and one has: 
\begin{equation*}
p^*\Omega=\e^{\phi}\Omega_K, \ \ \ p^*\theta=d\phi, \ \ \ \phi\in\ce(\hat M,\RR).
\end{equation*}

Note that the notion of an LCK metric is conformal in nature. Thus, any relevant definition concerning a general LCK structure should be conformally invariant. We will denote by $[\Omega]=\{\e^f\Omega|f\in\ce(M,\RR)\}$ the conformal class of an LCK metric. Among the objects defined above, the de Rham class $[\theta]$, the morphism $\rho$ and the half-line of \Ka\ metrics $\RR_{>0}\Omega_K$ are indeed univoquely defined by the conformal class $[\Omega]$. 

For later use, let us introduce the set of de Rham classes of Lee forms of LCK structures:
\begin{equation*}
\LL(M,J):=\{[\theta]\in H^1(M,\RR)|\exists \Omega\in\E^{1,1}(M,\RR), \ \Omega>0, \ d_\theta\Omega=0\}.
\end{equation*}
We will say that $(M,J)$ is of LCK type if $\LL(M,J)$ is not empty.

In LCK geometry, an important role plays the differential operator: 
\begin{equation*}\label{dtheta}
\begin{split}
d_\theta:\E^k(M)\rightarrow \E^{k+1}(M)\\
d_\theta\eta=d\eta-\theta\wedge\eta.
\end{split}
\end{equation*}
This operator naturally appears when one is led to consider $\rho^{-1}$-equivariant forms on $\hat M$, such as the \Ka\ form. Indeed, equivariant forms are exactly pullbacks of forms from $M$ multiplied by $\e^{-\phi}$, and under this operation $d$ on $\hat M$ corresponds to $d_\theta$ on $M$, as for any smooth form $\al$ on $M$, one has the relation:
\begin{equation*}
d(\e^{-\phi}p^*\al)=\e^{-\phi}p^*(d_\theta\al).
\end{equation*}

In the same manner appears also the operator $d^c_\theta:=d^c-J\theta\wedge\cdot$. A simple, but very useful fact is the following lemma, cf. \cite{v85}:

\begin{lemma}\label{ldtheta}
Let $M$ be a connected differentiable manifold and $\theta$ a real-valued closed $1$-form on $M$. Then $d_\theta:\ce(M)\rightarrow\E^1(M)$ is injective if and only if $\theta$ is not exact. 
\end{lemma}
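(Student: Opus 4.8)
The plan is to prove the two implications separately, the substantive content lying entirely in the converse direction; note first that for a function $f$ the definition reduces to $d_\theta f = df - f\theta$, since $\theta\wedge f = f\theta$ on $0$-forms. For the direction ``injective $\Rightarrow$ non-exact'' I would argue by contraposition: if $\theta = dg$ for some $g\in\ce(M)$, then setting $f=\e^{g}$ gives a nowhere-zero function with $d_\theta f = \e^{g}\,dg - \e^{g}\,dg = 0$, so $f\in\ker d_\theta\setminus\{0\}$ and the operator is not injective. Hence injectivity forces $\theta$ to be non-exact.

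For the converse, I would assume $d_\theta$ is not injective and manufacture an explicit primitive of $\theta$. Choose $f\in\ce(M)$ with $f\not\equiv 0$ and $df = f\theta$. The crucial claim is that such an $f$ vanishes nowhere. To establish this I would test the equation along paths: for any smooth curve $\gamma\colon[0,1]\to M$, the scalar function $h(t) := f(\gamma(t))$ satisfies the linear first-order ODE $h'(t) = \theta(\dot\gamma(t))\,h(t)$, whose unique solution is $h(t) = h(0)\exp\!\big(\int_0^t \theta(\dot\gamma(s))\,ds\big)$. Consequently $h(0)=0$ forces $h\equiv 0$ along the whole curve. Since a manifold is connected if and only if it is path-connected, a single zero of $f$ would then propagate to every point of $M$, contradicting $f\not\equiv 0$. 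Thus $f$ is nowhere zero, and being continuous on a connected space it has constant sign; after replacing $f$ by $-f$ if necessary we may assume $f>0$. Then $\theta = df/f = d(\log f)$, so $\log f\in\ce(M)$ is a global primitive and $\theta$ is exact, completing the contrapositive.

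The main obstacle, and the only point requiring care, is the nowhere-vanishing assertion: without it one could only conclude $\theta = d(\log|f|)$ on the open set $\{f\neq 0\}$, which need not extend across the zero locus. The path-ODE uniqueness argument combined with connectedness of $M$ is precisely what rules out zeros and upgrades the local formula to a global one; everything else is a routine verification.
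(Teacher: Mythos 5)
Your proof is correct and complete. One thing to note at the outset: the paper does not actually prove this lemma --- it is stated with a citation to Vaisman's paper \cite{v85}, so there is no in-text argument to compare against; your proposal supplies the missing proof. The easy direction (exact $\Rightarrow$ not injective, via $f=\e^{g}$) is the standard one. For the substantive direction, the classical argument (the one behind the citation) runs slightly differently from yours: since $\theta$ is closed, one writes $\theta=dg_\al$ on a cover $\{U_\al\}$ of local primitives, observes that $d_\theta f=0$ is equivalent to $d(\e^{-g_\al}f)=0$, i.e. $f=c_\al \e^{g_\al}$ on $U_\al$, and concludes that the zero set of $f$ is both open and closed; connectedness then gives the dichotomy ``$f\equiv 0$ or $f$ nowhere zero,'' after which $\theta=d\log\abs{f}$ exactly as in your last step. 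Your route replaces the local-primitive argument by restricting the equation $df=f\theta$ to smooth paths and invoking uniqueness for linear first-order ODEs; this is the parallel-transport incarnation of the same dichotomy (kernel elements of $d_\theta$ are parallel sections of a flat line bundle, hence identically zero or nowhere zero), and it is equally elementary and fully rigorous --- the only implicit standard facts you use are that a connected manifold is smoothly path-connected and that injectivity of the linear operator $d_\theta$ is equivalent to triviality of its kernel. Both approaches isolate the same key point, which you correctly flag: without the nowhere-vanishing claim, $\log\abs{f}$ would only be a primitive of $\theta$ off the zero locus of $f$.
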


Most of the special LCK metrics defined in the introduction can also be given equivalent definitions in terms of the operator $d_\theta$. As such, an LCK structure $(\Omega,\theta)$ on $(M,J)$  is: 
\begin{enumerate}[(a)]
\item \textit{exact} if there exists a one-form $\eta$ on $M$ so that $\Omega=d_\theta\eta$. Note that, indeed, this is equivalent to \eqref{exact}. 
\item \textit{with potential} if there exists $f\in\ce(M,\RR)$ so that $\Omega=d_\theta d^c_\theta f$. This is equivalent to \eqref{potential}. Moreover, it is called \textit{with positive potential} if $f$ can be chosen positive. 
\end{enumerate}
Remark that these definitions are invariant by conformal transformations, so can also be used for conformal classes of metrics. 

\subsection*{Vaisman metrics} On the other hand, Vaisman metrics cannot be defined by the operator $d_\theta$ alone, but it is true that they admit constant potential. First of all, the Lee and anti-Lee vector fields $A$ and $B$ of a Vaisman structure $(g,\Omega,\theta)$ on $(M,J)$, defined by \eqref{Leedef}, have remarkable properties. The defining condition $\nabla^g\theta=0$ is also equivalent to $\nabla^gB=0$, as $B$ is the metric dual of $\theta$. This immediately implies that $B$ (and so also $A$) is of constant norm. Moreover, it is not difficult to see that $A$ and $B$ are real holomorphic and Killing. Finally, this also implies that $B$ is symplectic, which is equivalent to $\Omega$ admitting $f=\frac{1}{\norm B^2}\in\RR$ as a potential:
\begin{equation*}
\Omega=\frac{1}{\norm B^2}(-dJ\theta+\theta\wedge J\theta)=d_\theta d^c_\theta(\frac{1}{\norm B^2}).
\end{equation*}

Moreover, a Vaisman metric is a \textit{Gauduchon metric}, meaning that its Lee form is $d^*$-closed (and thus harmonic), where $d^*$ is the co-differential with respect to $g$. This is easy to see if one writes $d^\star=-\sum_{j=1}^{2n}\iota_{e_j}\nabla_{e_j}$, with $\{e_1, \ldots e_{2n}\}$ a local orthonormal real basis of $TM$. In particular, a Vaisman metric inherits the property of Gauduchon metrics of being unique in their conformal class up to multiplication by a positive constant, cf. \cite{g77}. For this reason, we will usually normalize a Vaisman metric to verify $\norm B=1$, which then implies that $\Omega=d_\theta d^c_\theta 1$.

\section{The Lee vector field}\label{theLee}

It is easy to see that, if the Lee vector field of an LCK metric is Killing, then the metric is Vaisman. Moreover, the same conclusion holds if the Lee vector field preserves the fundamental form, by a result of \cite{mm17}. However, it is not true that the holomorphicity of the Lee vector field implies the Vaisman condition. It was recently shown:

\begin{theorem}(\cite{mmo18})
Let $(M,J,g,\Omega,\theta)$ be a compact LCK manifold with holomorphic Lee vector field. If $B$ has constant norm, or if $g$ is Gauduchon, then $g$ is a Vaisman metric.
\end{theorem}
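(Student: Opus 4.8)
The plan is to show that the hypotheses force $\nabla^g B=0$, which is exactly the Vaisman condition (since $\theta=g(B,\cdot)$, one has $\nabla^g\theta=0\iff\nabla^g B=0$). Write $S:=\nabla^g B$, regarded as an endomorphism of $TM$. Because $\theta$ is closed, $g(SX,Y)=(\nabla^g_X\theta)(Y)$ is symmetric, so $S$ is $g$-symmetric; in particular $\tfrac12 d|B|^2=SB$ and $\tr S=\mathrm{div}\,B=-d^*\theta$. First I would record two consequences of holomorphicity. From the Hermitian formula for $\nabla^g J$ of an LCK metric (equivalently, from the conformal relation between $\nabla^{g_K}$ on the \Ka\ cover and $\nabla^g$) one gets the pointwise identity $\nabla^g_B J=0$; combined with $\mathcal{L}_B J=0$ and $(\mathcal{L}_B J)X=(\nabla^g_B J)X-S(JX)+J(SX)$ this yields $[S,J]=0$. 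Moreover $A=JB$ is again real holomorphic, and since $\iota_A\Omega=-\theta$ is closed one has, for free, $\mathcal{L}_A\Omega=d\iota_A\Omega+\iota_A(\theta\wedge\Omega)=\theta(A)\,\Omega=0$; as $A$ is holomorphic this forces $\mathcal{L}_A g=0$, i.e. $A$ is a Killing field with $\mathrm{div}\,A=0$.

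The geometric core is a curvature identity supplying information the Bochner formulas below cannot see. On $\hat M$ one has $g_K=\mathrm e^{-\phi}g$ with $p^*\theta=d\phi$, and $\Ric^{g_K}$ is $J$-invariant. Plugging $f=-\tfrac12\phi$ into the conformal transformation law of the Ricci tensor, and using $[S,J]=0$ to kill the Hessian contribution, the $J$-anti-invariant part of $\Ric$ is forced to be algebraic in $\theta$ (everything descends to $M$):
\[
\Ric(X,X)-\Ric(JX,JX)=\tfrac{n-1}{2}\big((J\theta)(X)^2-\theta(X)^2\big).
\]
Taking $X=B$, so that $(J\theta)(B)=g(A,B)=0$ and $\theta(B)=|B|^2$, this becomes the pointwise relation $\Ric(B,B)-\Ric(A,A)=-\tfrac{n-1}{2}|B|^4$.

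Next I would compute both Riccis through $\Ric(X,X)=\mathrm{div}(\nabla^g_X X)-X(\mathrm{div}\,X)-\tr\big((\nabla^g X)^2\big)$. For $X=B$ this gives $\Ric(B,B)=\mathrm{div}(SB)-B(\tr S)-|S|^2$, and for the Killing field $X=A$ it gives $\Ric(A,A)=\mathrm{div}(\nabla^g_A A)+|\nabla^g A|^2$, where $|\nabla^g A|^2$ is evaluated from the $\nabla^g J$ formula as $\tfrac{n-1}{2}|B|^4+|S|^2-|B|^2\tr S+2g(SB,B)$. Substituting into the curvature relation and integrating over the compact $M$ (so the divergences and the terms $\int B(\cdot)$, $\int|B|^2\tr S$ are controlled), the $\tfrac{n-1}{2}|B|^4$ terms cancel and one is left with a single scalar identity
\[
\int_M\big(2|S|^2-(\tr S)^2-2|B|^2\,\tr S\big)\,\mathrm{vol}_g=0 .
\]
If $g$ is Gauduchon then $\tr S=-d^*\theta\equiv0$, so this reads $\int_M|S|^2=0$ and $S\equiv0$. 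In the constant-norm case $SB=0$, whence also $\nabla^g_A A=\nabla^g_B B=0$ and $g(SB,B)=0$; then the identity holds \emph{pointwise} in the sharper form $2|S|^2=|B|^2\,\tr S-B(\tr S)$, and a maximum principle on $h:=\tr S$ finishes: at a minimum of $h$ one has $B(h)=0$, so $2|S|^2=|B|^2h\ge0$ gives $h\ge0$ there, hence $h\ge0$ everywhere; since $\int_M h=\int_M\mathrm{div}\,B=0$, this forces $h\equiv0$ and then $S\equiv0$. In either case $\nabla^g B=0$, so $g$ is Vaisman.

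I expect the main obstacle to be the curvature identity of the second step: the Weitzenböck/divergence identities for $A$ and for $B$ turn out to be mutually consistent and so individually carry no information, and the proof must isolate the genuinely new input, namely the explicit $J$-anti-invariant part of the Riemannian Ricci tensor, which is precisely what the conformal comparison with the \Ka\ cover provides. A secondary subtlety is the constant-norm case, where integration alone yields only $\int 2|S|^2=\int(\tr S)^2$, and the decisive sign information must instead come from the pointwise maximum principle.
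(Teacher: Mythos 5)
You should know at the outset that the paper contains no proof of this statement: it is imported verbatim from \cite{mmo18} as background for Section 2, and the only neighbouring argument in the paper (the proof of the proposition on metrics with potential $1$) is a short Cartan-formula computation logically independent of yours. So the comparison can only be between your proposal and what the statement actually requires; on that score your proof is, as far as I can check, correct and complete. All the key identities hold with the paper's conventions ($d\Omega=\theta\wedge\Omega$, $\iota_A\Omega=-\theta$, $\iota_B\Omega=J\theta$, $d^*=-\sum_j\iota_{e_j}\nabla_{e_j}$): the LCK formula for $\nabla J$ does give $\nabla_BJ=0$, hence $[S,J]=0$ from $\mathcal{L}_BJ=0$; $A=JB$ is Killing; since $\mathrm{Hess}\,\phi=g(S\cdot,\cdot)$ is then $J$-invariant, the conformal transformation law for $\mathrm{Ric}$ under $g_K=\e^{-\phi}g$ (whose Ricci is $J$-invariant) leaves only the $\theta\otimes\theta$ term in the $J$-anti-invariant part, and the constant $\tfrac{2n-2}{4}=\tfrac{n-1}{2}$ and the sign are right (I checked them against the Hopf manifold). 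Your two divergence formulas are the standard contracted Ricci identities, your expression for $|\nabla A|^2$ is the one consistent with $d\Omega=+\theta\wedge\Omega$, and the integrated identity $\int_M\bigl(2|S|^2-(\tr S)^2-2|B|^2\tr S\bigr)=0$ follows after converting $\int_M B(\tr S)=-\int_M(\tr S)^2$ and $\int_M g(SB,B)=-\tfrac12\int_M|B|^2\tr S$. Both endgames close: Gauduchon gives $\tr S=-d^*\theta\equiv0$ hence $\int_M|S|^2=0$; constant norm gives $SB=0$, $\nabla_AA=JSA=0$, the pointwise identity $2|S|^2=|B|^2\tr S-B(\tr S)$, and your minimum-principle plus $\int_M\tr S=\int_M\mathrm{div}\,B=0$ forces $S\equiv0$.

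Two caveats worth recording. First, the constant-norm case tacitly uses $|B|>0$; this is covered by the paper's standing assumption that all LCK structures are strict (a non-exact $\theta$ cannot vanish identically, so a constant-norm Lee field is nowhere zero), but you should say so. Second, your argument is sensitive to the sign convention in the $\nabla J$ formula: with the opposite sign (i.e.\ the one adapted to $d\Omega=-\theta\wedge\Omega$) the cross terms in $|\nabla A|^2$ flip to $+|B|^2\tr S-2g(SB,B)$ and the final identity comes out wrong, so the verification that your formula matches the paper's orientation conventions is not cosmetic. Finally, for context: the route you found — $A$ Killing, $[S,J]=0$, the $J$-anti-invariant part of $\Ric$ extracted from the conformal comparison with the \Ka\ cover, and a pair of Bochner/divergence identities — is in the same spirit as the argument in the cited source \cite{mmo18}, so this is a faithful reconstruction rather than a genuinely new proof; its main merit is that it isolates cleanly, as you note, why the individual Weitzenb\"ock identities for $A$ and $B$ carry no information and where the extra input enters.
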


Moreover, we have the following simple result:
\begin{proposition}\label{potV}
Let $(M,J)$ be a compact manifold with an LCK metric with potential $1$: $\Omega=-dJ\theta+\theta\wedge J\theta$. If $\Omega$ has real holomorphic Lee vector field $B$, then $\Omega$ is Vaisman.
\end{proposition}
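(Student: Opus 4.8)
The hypothesis gives an LCK structure with $\Omega = -dJ\theta + \theta\wedge J\theta$, i.e.\ with constant potential $f=1$, so that $\Omega = d_\theta d^c_\theta 1$. My plan is to show that the holomorphicity of $B$ forces $\theta$ to have constant norm, and then invoke the result of \cite{mmo18} quoted just above (constant norm of $B$ plus holomorphic $B$ implies Vaisman). The natural strategy is to pass to the minimal \Ka\ cover $p:\hat M\to M$, where $p^*\Omega = \e^\phi\Omega_K$ with $p^*\theta = d\phi$ and $\Omega_K = dd^c(\e^{-\phi})$ by the potential-$1$ normalization. On $\hat M$ the Lee vector field $B$ lifts (it is the metric dual of $\theta$, hence descends from the gradient structure upstairs), and $B$ holomorphic downstairs is equivalent to its lift being holomorphic.

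First I would translate the geometric data into the cover. Writing $\psi=\e^{-\phi}$, the \Ka\ form is $\Omega_K = dd^c\psi$, so $\psi$ is a global \Ka\ potential and $\norm\theta^2_g = \e^{-\phi}\norm{d\phi}^2_{g_K}$; the content to be extracted is that $\norm B_g$ is constant on $M$. The key computation is to express the condition $\mathcal L_B J = 0$ (real holomorphicity of $B$) in terms of $\phi$ on $\hat M$. Because $\Omega = d_\theta d^c_\theta 1$, the vector fields $A=JB$ and $B$ are tied to the potential: I would use $\iota_A\Omega = -\theta$ and $\iota_B\Omega = J\theta$ together with $d\Omega = \theta\wedge\Omega$ to derive structural identities, in particular relating $\mathcal L_B\Omega$, $\mathcal L_B\theta$ and $d\iota_B\Omega = d(J\theta)$. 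Since $\Omega = -dJ\theta + \theta\wedge J\theta$, one gets $d(J\theta) = -\Omega + \theta\wedge J\theta$, and contracting/differentiating should produce an equation for $B(\norm\theta^2)$ in terms of $\theta(\norm\theta^2)$ and the non-Vaisman defect.

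The crux is the following: holomorphicity of $B$ means $\mathcal L_B J=0$, hence $B$ preserves both $\Omega$ and $g$ up to the conformal factor governed by $\theta$; concretely $\mathcal L_B\Omega = (\iota_B\theta)\Omega + \theta\wedge\iota_B\Omega = \norm\theta^2\,\Omega + \theta\wedge J\theta$ using $\iota_B\theta = \theta(B) = \norm\theta^2_g$ and $\iota_B\Omega = J\theta$. On the other hand, since $B$ is holomorphic and $\Omega=g(J\cdot,\cdot)$, the $(1,1)$-part is preserved in a controlled way, and comparing $\mathcal L_B\Omega$ computed the two ways yields an equation whose $d^c$-derivative, combined with the potential relation $\Omega=d_\theta d^c_\theta 1$, forces $d(\norm\theta^2)$ to be proportional to $\theta$. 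The main obstacle I anticipate is precisely this last step: extracting constancy of $\norm B_g$ rather than merely $B(\norm B^2)=0$. I would handle it by integrating over the compact manifold $M$: pairing the derived identity with a suitable global form and using $\int_M d(\cdots)=0$ together with positivity of $\Omega$ (so $\int_M \norm\theta^2\,\Omega^n/n! = \int_M \theta\wedge J\theta\wedge\Omega^{n-1}/(n-1)!\ge 0$ type estimates) should pin down that the non-Vaisman defect vanishes. Once $\norm B_g$ is constant, the hypothesis of \cite{mmo18} applies directly and $\Omega$ is Vaisman, completing the proof.
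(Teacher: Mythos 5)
Your high-level strategy---prove $\norm{B}$ is constant and then quote the theorem of \cite{mmo18}---is legitimate, but the mechanism you propose for extracting constancy from holomorphicity has a genuine gap, and the one concrete formula you write down is wrong. By Cartan's formula, $\LL_B\Omega=d\iota_B\Omega+\iota_Bd\Omega=dJ\theta+\theta(B)\Omega-\theta\wedge J\theta$; your expression drops the term $d\iota_B\Omega=dJ\theta$ and has a sign error in $\iota_B(\theta\wedge\Omega)$. More seriously, once the computation is done correctly and one substitutes $dJ\theta=\theta\wedge J\theta-\Omega$ (the potential-$1$ hypothesis), one finds
\begin{equation*}
\LL_B\Omega=(\norm{B}^2-1)\,\Omega,
\end{equation*}
an identity valid for \emph{any} LCK metric of this form, with no holomorphicity used at all. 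In particular $\LL_B\Omega$ is automatically of type $(1,1)$ and proportional to $\Omega$, so your plan of ``comparing $\LL_B\Omega$ computed two ways'' produces no new information; one can check that applying $d$, or applying $d^c$ and commuting it with $\LL_B$ via holomorphicity, also returns tautologies. Your fallback---an integral identity over $M$ to kill the ``non-Vaisman defect''---is not spelled out and could not in any case yield the pointwise conclusion $\norm{B}^2\equiv1$; note also that even knowing $d(\norm{B}^2)$ is proportional to $\theta$ does not give constancy.

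The missing idea, which is where holomorphicity actually enters, is to take the Lie derivative of $J\theta$ rather than of $\Omega$. The paper uses the anti-Lee field $A=JB$: since $B$ is real holomorphic so is $A$, and since $\theta(A)=0$ and $d\theta=0$ one has $\LL_A\theta=0$, hence $\LL_A(J\theta)=J\LL_A\theta=0$. Expanding $0=\LL_AJ\theta=d\iota_AJ\theta+\iota_AdJ\theta$ with $dJ\theta=\theta\wedge J\theta-\Omega$ gives exactly $d_\theta(\norm{B}^2-1)=0$, and then \ref{ldtheta} (injectivity of $d_\theta$ on functions, valid because $\theta$ is not exact for a strict LCK structure) gives $\norm{B}^2\equiv1$ pointwise---no integration needed; this lemma, not compactness estimates, is the tool your proposal lacks. (Equivalently, one can compare two computations of $\LL_B(J\theta)$: holomorphicity gives $\LL_B(J\theta)=Jd(\norm{B}^2)$, while Cartan gives $(\norm{B}^2-1)J\theta$, leading to the same equation.) After that you could indeed invoke \cite{mmo18}; the paper instead finishes directly: $\norm{B}^2=1$ turns the displayed identity into $\LL_B\Omega=0$, so $B$ preserves $J$ and $\Omega$, hence $g$, making $\nabla^g\theta$ antisymmetric, and then $2\nabla^g\theta=d\theta=0$.
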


\begin{proof}
If $B$ is real holomorphic, then also $A=JB$ is. The Cartan formula and $\LL_A\theta=0$ imply:
\begin{align*}
0&=\LL_AJ\theta=d\iota_AJ\theta+\iota_AdJ\theta\\
=&-d(\theta(JA))+\iota_A(\theta\wedge J\theta-\Omega)\\
=&d(\norm{B}^2)-\theta\norm{B}^2+\theta\\
=&d_\theta(\norm{B}^2-1).
\end{align*}
Thus \ref{ldtheta} implies that $\norm{B}^2=1$. Using again Cartan's formula and the form of $\Omega$, we obtain:
\begin{align*}
\LL_B\Omega&=d\iota_B\Omega+\iota_B(\theta\wedge\Omega)\\
&=dJ\theta+\norm B^2\Omega-\theta\wedge J\theta\\
&=-\Omega+\Omega=0.
\end{align*}
Finally, since $B$ preserves both the complex structure and the symplectic form, it also preserves the metric. This implies that $\nabla^g\theta$ is antisymmetric. We therefore obtain: $0=d\theta=2\nabla^g\theta$, i.e. $g$ is Vaisman.
\end{proof}

In the paper \cite{mmo18}, the authors also construct an example of an LCK metric which is not Vaisman, but which has holomorphic Lee vector field, thus showing that one needs some additional hypotheses on $\Omega$ to ensure that it is Vaisman. We now present this example, with the remark that in the original construction, the metric can in fact be chosen with positive potential. This shows that the hypotheses in \ref{potV} cannot be relaxed. 

\begin{example}(\cite{mmo18})\label{Leeolo}
Let $(M,J,\Omega,\theta)$ be a compact Vaisman manifold with $\norm{\theta}^2=1$, and let $B$ be its Lee vector field. Suppose there exists a non-constant smooth function $f\in\ce(M,\RR)$ verifying $f>-1$ everywhere on $M$ and such that $df$ is colinear with $\theta$.  After taking the interior product with $B$, this last condition is more precisely $df=B(f)\theta$. Such functions exist any time $B$ generates an $\Ss^1$-action on $M$, for instance on the standard Hopf manifold.

Consider next the form: 
\begin{equation*}
\Omega':=\Omega+f\theta\wedge J\theta=d_{(1+f)\theta}(-dJ\theta).
\end{equation*}
As $f>-1$, $\Omega'$ is a strictly positive real $(1,1)$-form on $M$, and verifies $d\Omega'=(1+f)\theta\wedge\Omega'$. Thus $\Omega'$ is the fundamental form of an LCK metric with Lee form $\theta'=(1+f)\theta$. 
 
\begin{lemma}\label{notconf}
The Lee vector field of $\Omega'$ is $B$, and so also holomorphic. The metric $\Omega'$ is not conformal to any Vaisman metric.
\end{lemma}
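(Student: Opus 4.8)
The plan is to treat the two assertions separately: the identification of the Lee field is a direct computation, while the non-conformality to any Vaisman metric is a rigidity statement whose proof needs more work.

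For the Lee field, I would start from the defining relation $\iota_{B'}\Omega'=J\theta'$ for the Lee field $B'$ of $\Omega'$, where $\theta'=(1+f)\theta$, and verify that $B$ solves it. Writing $\Omega'=\Omega+f\,\theta\wedge J\theta$ and using $\iota_B\Omega=J\theta$ together with the elementary values $\theta(B)=\norm{B}^2=1$ and $(J\theta)(B)=-\theta(JB)=-\theta(A)=0$, one computes $\iota_B(\theta\wedge J\theta)=J\theta$, hence $\iota_B\Omega'=(1+f)J\theta=J\theta'$. Since $\Omega'$ is nondegenerate this forces $B'=B$, and holomorphy of $B$ is inherited from the original Vaisman structure $(\Omega,\theta)$. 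This step is routine.

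For the second assertion I would first record that $\Omega'$ is not itself Vaisman: in the metric $g'=\Omega'(\cdot,J\cdot)$ the norm of its Lee field is $\norm{B}^2_{g'}=\Omega'(B,A)=\Omega(B,A)+f\,(\theta\wedge J\theta)(B,A)=1+f$, which is non-constant because $f$ is. As a Vaisman metric always has a Lee field of constant norm, $\Omega'$ is not Vaisman. It then remains to exclude a conformal factor repairing this. Suppose $\Omega_V=e^h\Omega'$ were Vaisman. I would transform the Lee data: the Lee form becomes $\theta_V=\theta'+dh$, and since $B$ is the $g'$-dual of $\theta'$, the Lee field becomes $B_V=e^{-h}(B+\grad_{g'}h)$. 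The Vaisman hypothesis imposes three conditions on $B_V$ — it is holomorphic, Killing (in fact parallel) for $g_V$, and of constant $g_V$-norm — and the goal is to show that together they force $dh\equiv 0$, so that $\Omega_V$ is a constant multiple of $\Omega'$, contradicting the previous sentence.

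I expect this last implication to be the main obstacle. The constant-norm condition reads $\norm{B_V}^2_{g_V}=e^{-h}\big((1+f)+2B(h)+\norm{dh}^2_{g'}\big)=\mathrm{const}$, while the Killing property of $B_V$ and $A_V=JB_V$ yields a second, second-order relation for $h$. The cleanest route I foresee is to evaluate the norm equation at the maximum and minimum of $h$, where $dh=0$ and it collapses to $1+f=\mathrm{const}\cdot e^{h}$, pinning the value of $f$ at those points, and then to feed the divergence-free property of the Killing fields $B_V,A_V$ — rewritten on $(M,g')$ through the conformal transformation $\mathrm{div}_{g_V}X=\mathrm{div}_{g'}X+nX(h)$ — into an integration by parts over the compact manifold $M$, so that the sign-definite term $\norm{dh}^2_{g'}$ forces $dh=0$. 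Structurally, the reason such an argument must succeed is that a conformal factor $e^h$ rescales all tangent directions uniformly, whereas $g'$ differs from the genuinely Vaisman metric $g$ only by the factor $1+f$ in the two directions spanned by $B$ and $A=JB$, namely $g'=g+f(\theta\otimes\theta+J\theta\otimes J\theta)$, with $f$ varying only along the Lee flow; such an anisotropic, foliation-only distortion cannot be absorbed by a uniform rescaling unless $f$ is constant. Turning this picture into the PDE/maximum-principle argument above is the technical heart of the proof.
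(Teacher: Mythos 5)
Your first assertion and your observation that $\Omega'$ itself cannot be Vaisman are correct and coincide with the paper's argument: $\iota_B\Omega'=(1+f)J\theta=J\theta'$ identifies $B$ as the Lee field of $\Omega'$, and $\norm{B}^2_{g'}=\theta'(B)=1+f$ is non-constant, whereas the Lee field of a Vaisman metric has constant norm. The genuine gap is in the step you yourself flag as the ``technical heart'': excluding a non-trivial conformal factor. Your plan (a maximum-principle evaluation at critical points of $h$ plus an integration by parts using the divergence of the Killing fields) is never carried out, and as stated it does not close. Evaluating the norm equation at extrema of $h$ only produces the pointwise relation $1+f=\mathrm{const}\cdot\e^{h}$ at those points; since $f$ and $h$ are otherwise unrelated there, no contradiction follows. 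Likewise, nothing in your sketch shows that the divergence identity yields a sign-definite integral identity: the cross term $2B(h)$ in $\norm{B_V}^2_{g_V}=\e^{-h}\bigl((1+f)+2B(h)+\norm{dh}^2_{g'}\bigr)$ has no sign, so $\norm{dh}^2_{g'}$ does not obviously dominate, and this is not a routine completion.

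The paper avoids all of this with one external input you did not invoke: Tsukada's theorem (\cite{t97}) that the Lee vector field of a Vaisman metric on a fixed compact complex manifold is unique up to a multiplicative constant. Since $M$ already carries the Vaisman metric $\Omega$ with Lee field $B$, any hypothetical Vaisman metric $\Omega''=\e^{h}\Omega'$ may be normalized so that its Lee field is exactly $B$. Contracting $B$ into $\Omega''$ then gives $\e^{h}J\theta'=\iota_B\Omega''=J\theta'+d^ch$, which after applying $J$ and multiplying by $-\e^{-h}$ becomes the first-order equation $d_{\theta'}(\e^{-h}-1)=0$; since $\theta'=(1+f)\theta$ is nowhere zero, hence not exact, the injectivity lemma \ref{ldtheta} forces $\e^{-h}=1$, i.e. $h=0$, and the non-constancy of $1+f$ gives the contradiction. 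So the missing idea is precisely this reduction, via Tsukada's uniqueness, of the second-order Vaisman condition to a first-order equation that \ref{ldtheta} kills; without it, your second paragraph remains an unproved program rather than a proof.
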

\begin{proof}
As $\iota_B\Omega'=(1+f)J\theta=J\theta'$, $B$ is also the Lee vector field of $\Omega'$. Now suppose that there exists a Vaisman metric $\Omega''$ on $M$ so that $\Omega''=\e^h\Omega'$. By a theorem of K. Tsukada \cite{t97}, the Lee vector field of a Vaisman metric is unique on the manifold $M$ up to multiplication by a constant. Thus, we can suppose right from the beginning that the Lee vector field of $\Omega''$ is also $B$. Now this reads:
\begin{equation*}
\e^hJ\theta'=\e^h\iota_B\Omega'=\iota_B\Omega''=J\theta'+d^ch
\end{equation*}
that is: $dh+\theta'(1-\e^h)=0$, or also, after multiplying by $-\e^{-h}$: $d_{\theta'}(\e^{-h}-1)=0.$ As $\theta'$ has no zero, it is non-exact, so \ref{ldtheta} implies that $\e^{-h}=1$, i.e. $h=0$ and $\Omega'$ is Vaisman. But this last fact is impossible, as the norm of $B$ is non-constant: $\Omega'(B,JB)=\theta'(B)=1+f$.
\end{proof}

Suppose now that the flow of $B$ on $M$ is periodic: $\Phi^{2\pi}_B=\id_M$, and that we have a diffeomorphism $M\cong N\times \Ss^1$, where $N$ is a compact Sasaki manifold. Any non-constant function on $\Ss^1$, bounded bellow by $-1$, induces a function $f$ on $M$ verifying the desired properties.

\begin{lemma}\label{positiveP}
Under the above hypothesis, the metric $\Omega'$ admits a positive potential.
\end{lemma}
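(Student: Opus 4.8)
The plan is to produce the potential by solving a linear ODE along the Lee direction. By the definition of positive potential, what I must exhibit is a function $F\in\ce(M,\RR)$ with $F>0$ and $\Omega'=d_{\theta'}d^c_{\theta'}F$, where $\theta'=(1+f)\theta$ is the Lee form of $\Omega'$. The ansatz I would make is that $F$, like $f$, be constant along $N$, i.e.\ that $dF$ be colinear with $\theta$: under the product diffeomorphism $M\cong N\times\Ss^1$ with $t$ the coordinate on $\Ss^1=\RR/2\pi\ZZ$, this means $F=F(t)$, while $B=\partial_t$, $\theta=dt$, and $J\theta$ (the pullback of the contact form of $N$) is independent of $t$. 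Recall from the potential-$1$ normalisation that $\Omega=-dJ\theta+\theta\wedge J\theta$, so that $\Omega'=\Omega+f\,\theta\wedge J\theta=-dJ\theta+(1+f)\,\theta\wedge J\theta$.

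First I would compute $d^c_{\theta'}F=d^cF-F\,J\theta'$. Since $F=F(t)$ and $J\theta$ is basic, a direct computation gives $d^c_{\theta'}F=-H\,J\theta$ with $H:=F'+(1+f)F$ a first-order linear expression in $F$ (the only delicate point being the sign convention for $d^c$ on functions, which I would fix once and for all against the identity $\Omega=d_\theta d^c_\theta 1$; the opposite choice merely replaces the ODE below by its mirror image, with identical conclusion). Then, using $dH=H'\theta$ and $d_{\theta'}\alpha=d\alpha-\theta'\wedge\alpha$,
\begin{align*}
d_{\theta'}d^c_{\theta'}F&=d_{\theta'}(-H\,J\theta)\\
&=-H\,dJ\theta+\bigl(H(1+f)-H'\bigr)\,\theta\wedge J\theta.
\end{align*}
Comparing with $\Omega'=-dJ\theta+(1+f)\,\theta\wedge J\theta$, the coefficient of $dJ\theta$ forces $H\equiv1$, whence $H'=0$ and the coefficient of $\theta\wedge J\theta$ is then automatically correct. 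Thus the entire problem reduces to finding a strictly positive $2\pi$-periodic solution of the scalar linear ODE
\begin{equation*}
F'(t)+\bigl(1+f(t)\bigr)F(t)=1 .
\end{equation*}

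To solve it I would use the integrating factor $\e^{\Psi}$, $\Psi(t):=\int_0^t\bigl(1+f(s)\bigr)\,ds$, which satisfies $\Psi(t+2\pi)=\Psi(t)+L$ with $L:=\int_0^{2\pi}(1+f)\,dt$. This is exactly where the hypothesis $f>-1$ is used decisively: it yields $L>0$, hence $\e^{L}>1$, so the periodicity requirement $(\e^{L}-1)\,\e^{\Psi(t)}F(t)=\int_t^{t+2\pi}\e^{\Psi(s)}\,ds$ has the unique solution
\begin{equation*}
F(t)=\frac{\e^{-\Psi(t)}}{\e^{L}-1}\int_t^{t+2\pi}\e^{\Psi(s)}\,ds ,
\end{equation*}
which is manifestly smooth, $2\pi$-periodic and strictly positive. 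Pulling $F$ back to $M=N\times\Ss^1$ gives the desired positive potential for $\Omega'$.

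The reduction to the ODE is routine once the ansatz $F=F(t)$ is adopted, and the $f=0$ case correctly recovers the constant potential $F\equiv1$ of the underlying Vaisman metric, which is a useful consistency check. The genuinely essential point — the only place the geometric hypotheses enter — is the existence and positivity of the periodic solution: the global periodicity comes from the periodicity of the flow of $B$ together with the product structure $M\cong N\times\Ss^1$, while positivity comes from $f>-1$, i.e.\ from $L>0$. I expect establishing this positivity (equivalently, that the one-period monodromy $\e^{L}$ of the homogeneous equation differs from $1$) to be the main, though short, obstacle.
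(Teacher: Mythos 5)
Your proof is correct and follows essentially the same route as the paper: the ansatz that the potential depends only on the Lee parameter $t$ reduces the equation $\Omega'=d_{\theta'}d^c_{\theta'}F$ to a first-order linear ODE along the flow of $B$ (yours is the mirror image of the paper's $\frac{d}{dt}g-g(1+f)+1=0$, the discrepancy being exactly the $d^c$ sign convention you flagged), and in both cases the hypothesis $f>-1$ enters only through $L=\int_0^{2\pi}(1+f)\,dt>0$, i.e.\ a nontrivial monodromy $\e^{L}\neq 1$ guaranteeing a periodic solution. The only genuine difference is the endgame: you write the unique periodic solution in closed form, $F(t)=\frac{\e^{-\Psi(t)}}{\e^{L}-1}\int_t^{t+2\pi}\e^{\Psi(s)}\,ds$, which makes positivity manifest, whereas the paper fixes integration constants to force periodicity and then argues positivity separately via a sign-change argument — a slight streamlining, but the same proof.
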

\begin{proof}
We think of $f$ as a function on $\RR$ which is $2\pi$-periodic, and we are looking for another positive function $g:\RR\rightarrow\RR$, also $2\pi$-periodic, verifying, when seen as a function on $M$:
\begin{equation}\label{ecu}
\Omega'=d_{\theta'}d^c_{\theta'}g.
\end{equation} 
The function $g$ we are looking for verifies that both $dg$ and $d\LL_Bg$ are colinear with $\theta$, which implies the following relations:
\begin{equation*}
dg=\LL_Bg\cdot\theta,\ \ \  d^cg=\LL_Bg\cdot J\theta, \ \ \  dd^cg=\LL_B^2g\cdot\theta\wedge J\theta+\LL_Bg\cdot dJ\theta.
\end{equation*}
With this in mind, \eqref{ecu} writes:
\begin{align*}
-dJ\theta+(1+f)\cdot\theta\wedge J\theta=&(\LL_Bg-g(1+f))dJ\theta+\\
+&(\LL_B^2g-\LL_Bf\cdot g-2(1+f)\LL_Bg+g(1+f)^2)\theta\wedge J\theta.
\end{align*}
Now, the two forms $-dJ\theta$ and $\theta\wedge J\theta$ are linearly independent, which implies that in the above equation, the corresponding coefficients preceding them must be equal. We denote by $t$ the variable on $\RR$, and identify $B$ with the vector field $\frac{d}{dt}$ on $\RR$. Seeing $f$ and $g$ as functions on $\RR$, \eqref{ecu} now becomes equivalent to:
\begin{align}
\frac{d}{dt}g-g(1+f)+1&=0\\
\frac{d^2}{dt^2}g-2(1+f)\frac{d}{dt}g-g\frac{d}{dt}f+g(1+f)^2-(1+f)&=0.
\end{align}
By differentiating the first equation, one obtains the second one, while the first ODE has a solution of the form:
\begin{equation*}
g(t)=(c-\int_0^t\e^{-F(s)}ds)\e^{F(t)}, \ \ \ \text{ with } F(t)=a+\int_0^t(f(s)+1)ds, \ \ \ a,c\in\RR.
\end{equation*}
Thus a solution $g$ of the above system exists, and now it is left for us to show that we can choose the constants $a$ and $c$ such that $g$ is moreover strictly positive and $2\pi$-periodic. 

Let us note that, because $f$ is $2\pi$-periodic, we have, for any $t\in\RR$:
\begin{equation*}
F(t+2\pi)=F(t)+b, \ \ \ \text{ where } b=\int_0^{2\pi}(f(s)+1)ds>0.
\end{equation*}
Thus we obtain:
\begin{align*}
g(t+2\pi)&=(c-\int_0^{2\pi}\e^{-F(s)}ds-\int_{2\pi}^{2\pi+t}\e^{-F(s)}ds)\e^{F(t)}\e^b\\
&=(c-K-\int_{0}^t\e^{-F(u)}\e^{-b}du)\e^{F(t)}\e^b\\
&=g(t)+\e^{F(t)}((c-K)\e^b-c)
\end{align*}
where $K=\int_{0}^{2\pi}\e^{-F(s)}>0$ and, for the second equality, we made the change of variable $s=u+2\pi$. Thus, in order for $g$ to be $2\pi$-periodic, we take $c:=\frac{K\e^b}{\e^b-1}>0$.
Finally, we need to see that $g$ is in fact positive, which is also equivalent to saying that $v(t):=c-\int_0^t\e^{-F(s)}ds$ is positive. Note that $\frac{d}{dt}v(t)=-\e^{-F(t)}<0$, so $v$ can change sign at most once, and the same is then true for the function $g$. On the other hand, $g$ is periodic and $g(0)=c\e^a>0$, thus $g$ is indeed everywhere positive.
\end{proof}
\end{example}

Note that, although the above example shows that there can exist non-Vaisman metrics with holomorphic Lee vector field, it is however constructed out of a Vaisman metric. So one can still ask the following question:

\begin{question}
Let $(M,J,\Omega)$ be a compact LCK manifold with holomorphic Lee vector field. Does there exist an LCK metric on $M$, not necessarily conformal to $\Omega$, which is Vaisman?
\end{question}

Also, recall that the Lee vector field of any Vaisman metric is uniquely determined up to multiplication by a positive constant, by \cite{t97}. A related question is then:  

\begin{question}
Suppose that the Lee vector field of an LCK metric on a manifold of Vaisman type is holomorphic. Is it then the Lee vector field of a Vaisman metric? 
\end{question}

\section{Existence of LCK metrics with positive potential} \label{ePotential}

Let us start by reviewing the notion of a \textit{vertical} action of a torus. For our discussion, it is enough to consider $\Ss^1$-actions. In what follows, we fix $M$ a compact smooth manifold and $\tau\in H^1(M,\RR)$ a de Rham class. By the universal coefficient theorem, we can also view $\tau\in\Hom(\pi_1(M),\RR)$. Then $\ker \tau$ is a normal subgroup of $\pi_1(M)$, so we can take $\hat M_\tau:=\tilde M/\ker\tau$, which is a normal cover of $M$. If $\theta\in\ce(T^*M)$ is a smooth representative of $\tau$, then $\hat M_\tau$ is the minimal cover of $M$ on which $\theta$ becomes exact.

Suppose $\Ss^1$ acts on $M$ with fundamental vector field $C$, and let $\Phi_t$ denote the corresponding $1$-periodic flow. By averaging $\theta$ to an $\Ss^1$-invariant form: $\theta':=\int_0^1\Phi_t^*\theta dt$, the de Rham class does not change, i.e. $[\theta']=[\theta]=\tau$, so we can just suppose that $\theta$ is $\Ss^1$-invariant. Now we have $0=\LL_C\theta=d(\theta(C))$, so $\theta(C)=a\in\RR$. Moreover, the value $a$ only depends on the de Rham class $\tau$: it is in fact $\tau$ evaluated on the homotopy class of an orbit of $\Ss^1$. 

We have the following simple characterisation of vertical actions:

\begin{lemma}\label{hor}
The action $\Ss^1$ is vertical for $\tau\in H^1(M,\RR)$ if and only if $\theta(C)\neq 0$ for some (and so any) $\Ss^1$-invariant representative $\theta\in\tau$.
\end{lemma}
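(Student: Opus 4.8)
The plan is to reduce the lifting question to a one-line computation with a primitive of $\theta$ on the cover, after first lifting not the group but its infinitesimal generator. Since $p\colon\hat M_\tau\to M$ is a covering and $C$ is complete (as $M$ is compact), there is a unique vector field $\hat C$ on $\hat M_\tau$ with $p_*\hat C=C$, and every integral curve of $C$ lifts through path-lifting to one of $\hat C$ defined for all times; hence $\hat C$ is complete and generates a global flow $\hat\Phi_t$ with $p\circ\hat\Phi_t=\Phi_t\circ p$. I would then note that lifting the $\Ss^1$-action is equivalent to $\hat\Phi_1=\id$: any $\Ss^1$-action on $\hat M_\tau$ covering the given one has generator projecting to $C$, hence equal to $\hat C$, so it must coincide with $\hat\Phi$, and a lift exists precisely when $\hat\Phi$ is $1$-periodic. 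Because $\Phi_1=\id_M$, the diffeomorphism $\gamma_0:=\hat\Phi_1$ satisfies $p\circ\gamma_0=p$ and is therefore a deck transformation, $\gamma_0\in\Gamma:=\pi_1(M)/\ker\tau$. Thus horizontality is exactly the condition $\gamma_0=\id$.

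The heart of the argument is to identify $\gamma_0$ via $\theta$. Writing $p^*\theta=d\phi$ on $\hat M_\tau$, the primitive $\phi$ obeys the automorphy relation $\phi\circ\gamma-\phi=\tau(\gamma)$ for every $\gamma\in\Gamma$, where $\tau$ is now the character induced on $\Gamma$. Differentiating $\phi$ along the lifted flow and using that $\theta$ is $\Ss^1$-invariant, I would compute $\frac{d}{dt}(\phi\circ\hat\Phi_t)=(d\phi)(\hat C)=(p^*\theta)(\hat C)=\theta(C)\circ p=a$, a constant, so $\phi\circ\hat\Phi_1-\phi=a$. Comparing this with the automorphy relation at $\gamma_0$ gives $\tau(\gamma_0)=a=\theta(C)$.

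To finish, I would invoke that the induced character $\tau\colon\Gamma\to\RR$ is injective --- this is precisely why $\hat M_\tau$ is defined as the quotient by $\ker\tau$ --- so $\gamma_0=\id$ if and only if $\tau(\gamma_0)=0$, i.e. if and only if $\theta(C)=0$. Combined with the second step, the action is horizontal exactly when $\theta(C)=0$, hence vertical exactly when $\theta(C)\neq 0$, which is the claim. The independence of the value from the chosen invariant representative is automatic, since two invariant primitives of $\tau$ differ by $dh$ with $h$ invariant and then $dh(C)=\LL_C h=0$; equivalently $\theta(C)=a$ is just $\tau$ evaluated on an orbit.

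I expect the only delicate point to be the covering-space bookkeeping of the first paragraph: establishing that $\hat C$ is complete and, above all, that $\hat\Phi_1$ is genuinely a deck transformation whose $\tau$-value records $\theta(C)$. Once the flow is lifted globally and $\gamma_0$ is identified, the rest is the short computation above. Alternatively, one could bypass the flow by citing the general criterion that a connected group action lifts to a normal cover if and only if the image of $\pi_1(\Ss^1)$ under the orbit map lands in the subgroup defining the cover; here that image is the orbit class, on which $\tau$ takes the value $a=\theta(C)$, giving the same dichotomy.
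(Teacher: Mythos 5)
Your proof is correct and takes essentially the same route as the paper: lift the generator $C$ to $\hat M_\tau$, observe that the $\Ss^1$-action lifts if and only if $\hat\Phi_1=\id$, recognize $\hat\Phi_1$ as a deck transformation, and conclude by injectivity of the induced character on $\Gamma=\pi_1(M)/\ker\tau$. The only cosmetic difference is that you evaluate $\tau(\hat\Phi_1)=\theta(C)$ by integrating the primitive $\phi$ along the lifted flow, whereas the paper identifies $\hat\Phi_1$ with the deck transformation of the orbit class by path-lifting the orbit loop --- the same computation in different clothing.
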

\begin{proof}
In any case, $C$ lifts to a vector field, also denoted by $C$, to $\hat M_\tau$, generating an $\RR$-action on $\hat M$. Let us denote by $\hat \Phi_t$ the corresponding flow. We want to show the equivalence: $\hat\Phi_1=\id_{\hat M_\tau} \Leftrightarrow \int_\gamma \tau\neq 0$, where $\gamma=[\Ss^1.x]\in\pi_1(M,x)$ is the homotopy class of an $\Ss^1$-orbit through an arbitrary point $x\in M$. 

Denote by $\hat \pi:\hat M_\tau\rightarrow M$ the covering of deck group $\Gamma:=\pi_1(M,x)/\ker\tau\subset\Aut(\hat M_\tau)$, and let $p:\pi_1(M,x)\rightarrow \Gamma$ be the natural projection. Then $\int_\gamma\tau\neq 0$ if and only if $p\gamma\neq id_{\hat M_\tau}$. But $p\gamma=\hat\Phi_1$: indeed, for any $\hat x\in\hat\pi^{-1}(x)$, the curve $[0,1]\ni t\mapsto \hat\Phi_t(\hat x)$ is the unique lift from $\hat x$ of the loop $[0,1]\ni t\mapsto \Phi_t(x)$ representing $\gamma$. Thus the conclusion follows.
\end{proof}

Thus, given a de Rham class $\tau\in H^1(M,\RR)$, a torus action $\TT^n$ on $M$ lifts to an action of $\TT^n$ on $\hat M_\tau$ if and only if $\Lie(\TT^n)\subset\ker\theta$ for any smooth $\TT^n$-invariant closed one-form $\theta\in\tau$.

\begin{proof}[\textbf{Proof of \ref{crPotential}}]
We recall that, by hypotheses, we have a compact LCK manifold $(M,J,\Omega,\theta)$ and a vertical $\Ss^1$-action on $M$ with respect to $\tau=[\theta]$. Let $(\hat M,J,\omega)$ be the minimal \Ka\ cover of $(M,J,[\Omega])$. We denote by $D$ the real holomorphic vector field on $M$ generating the $\Ss^1$-action, as well as its lift to $\hat M$. By a standard average argument which does not change the de Rham class of $\theta$, we can suppose that both $\Omega$ and $\theta$ are preserved by $D$. In particular, $\LL_D\theta=0$ implies, by \ref{hor}, that $\theta(D)=\la\in\RR^*$, as the action is vertical. Let $C:=\frac{1}{\la}D$, so that $\theta(C)=1$.

Let $\theta=d\phi$ on $\hat M$, so that the \Ka\ form writes $\omega=\exp(-\phi)\Omega$. Then we have:
\begin{equation}\label{omega1}
\LL_C\omega=-\theta(C)\omega=-\omega.
\end{equation}  
Let us denote by $\eta$ the real one-form on $\hat M$ defined by $\iota_C\omega=\eta$. Then \eqref{omega1} together with Cartan's formula imply:
\begin{equation}\label{omega2}
\omega=-(d\iota_C+\iota_Cd)\omega=-d\eta.
\end{equation}
At the same time, using the fact that $\eta(JC)=\omega(C,JC)=\norm{C}^2_\omega:=f$, we have:
\begin{align*}
\LL_{JC}\eta=d\iota_{JC}\eta+\iota_{JC}d\eta=df-J\eta,
\end{align*} 
from which it follows:
\begin{align*}
\LL_{JC}\omega &=-d(df-J\eta)=dJ\eta\\
\LL_{JC}^2\omega&=dJ\LL_{JC}\eta=dd^cf+d\eta=dd^cf-\omega.
\end{align*}
If we let $\Phi_t$ denote the one-parameter group generated by $JC$ and denote by $\omega_t:=\Phi_t^*\omega$ and by $f_t=\Phi_t^*f$, the last equation reads:
\begin{equation}\label{omega3}
\frac{d^2}{dt^2}\omega_t=-\omega_t+dd^cf_t.
\end{equation}

Let now $g_t$ be the real-valued functions on $\hat M$ defined by the second order linear differential equation:
\begin{align}\label{omega4}
\frac{d^2}{dt^2}g_t+g_t=f_t, \ \ \ g_0=0, \ \ \ \frac{d}{dt}|_{t=0}g_t=0.
\end{align}

We want to show that $\omega_t=\cos t \omega+\sin t dJ\eta+dd^c g_t$. For this, consider the forms $\be_t:=\omega_t-(\cos t \omega+\sin t dJ\eta+dd^c g_t)$, $t\in\RR$. Using \eqref{omega3} and the definition \eqref{omega4} of the functions $g_t$, we have:
\begin{align*}
\frac{d^2}{dt^2}\be_t&=\frac{d^2}{dt^2}\omega_t+\cos t  \omega+\sin t dJ\eta-dd^c(\frac{d^2}{dt^2}g_t)\\
&=-\omega_t+dd^cf_t+\cos t \omega+\sin t dJ\eta-dd^c f_t+dd^c g_t\\
&=-\be_t.
\end{align*}
Thus, the forms $\be_t$ verify the following  homogeneous second order linear differential equation with initial conditions:
\begin{equation*}
\frac{d^2}{dt^2}\be_t+\be_t=0, \ \ \ \be_0=0, \ \ \ \frac{d}{dt}|_{t=0}\be_t=0.
\end{equation*}
By the uniqueness of the solution, we have then that for all $t\in\RR$, $\be_t$ vanishes identically, and so:
\begin{equation}\label{omega5}
\omega_t=\cos t \omega+\sin t dJ\eta+dd^c g_t, \ \ \ t\in\RR.
\end{equation}

Define now, using \eqref{omega5}, a new form $\omega'$ by: 
\begin{equation*}
\omega':=\frac{1}{2\pi}\int_0^{2\pi}\Phi_t^*\omega dt=dd^c\frac{1}{2\pi}\int_0^{2\pi}g_tdt
\end{equation*}
and let us denote by $g$ the function $1/2\pi\int_0^{2\pi}g_tdt$. As $\{\Phi_t\}_{t\in\RR}$ is a subgroup of biholomorphism of $\hat M$, $\omega'$ is a \Ka\ form on $\hat M$. 

Next, we want to show that the function $g$ is everywhere positive on $\hat M$. Note first that, as $\theta(C)=1$, $C$ has no zeroes so the function $f$ is everywhere positive. Moreover, as $JC$ is real holomorphic, it commutes with both $C$ and $JC$, so we have $f_t=\Phi_t^*(\omega(C,JC))=\omega_t(C,JC)$, which is also everywhere positive for any $t\in\RR$. Finally, let us fix $x\in \hat M$, and define the real-valued function on $\RR$  $h(t):=g_t(x)$. For any $t_0\in\RR$ where $h(t_0)=0$, equation \eqref{omega4} implies that $\frac{d^2}{dt^2}|_{t=t_0}h=f_{t_0}(x)>0$, so $t_0$ is a strict local minimum for $h$. It follows then that $h$ is everywhere semipositive on $\RR$, and strictly positive on a dense open set of $\RR$. Thus $g(x)=1/2\pi\int_0^{2\pi} h(t)dt>0$. 

Hence we can define $\theta':=d\ln g$. Note that by the uniqueness of the solution of \eqref{omega4}, the functions $g_t$ have the same $\Gamma$-equivariance as the functions $f_t$, or also as the function $f$. Here, $\Gamma$ denotes the deck group of the cover $\hat M\rightarrow M$. Also we should note that, as $C$ and $JC$ are $\Gamma$-invariant, being lifts of vector fields from $M$, then the $\Gamma$-equivariance of $f:=\omega(C,JC)$ is exactly the equivariance of $\omega$. Thus it follows that $\theta'$ has the same $\Gamma$-equivariance as $\theta$, and so the two one-forms are cohomologuous. Hence the form
\begin{equation}
\Omega':=g^{-1}dd^cg
\end{equation}
descends to $M$ to an LCK metric with positive potential with Lee form $\theta'\in\tau$, and the proof is finished.
\end{proof}

\begin{remark}
Let us note that the above construction of an LCK metric with potential is natural and only depends on $\Omega$ and on $C$. In particular, if $\Omega$ is already $JC$-invariant, which will imply that the metric is Vaisman, then we have $f_t=f$ and the solution of $\eqref{omega4}$ is then $g_t=(1-\cos t)f$, so in particular the potential $g=f$ remains unchanged. 
\end{remark}

\begin{remark}
On the other hand, for a metric $\Omega$ which is not $JC$-invariant, the above construction gives us a countable set of metrics with potential associated to the de Rham class of $\theta$. Indeed, we considered the potential $g_{[1]}:=g$, but for any $n\in\NN^*$, the potential $g_{[n]}:=1/2n\pi\int_{0}^{2n\pi}g_tdt$ works as well.
\end{remark}

\section{Existence of Vaisman metrics}\label{eVaisman}

In this section we are interested in giving a proof of \ref{thVa} and of \ref{notLCK}. We start by giving the main proposition, which will directly imply the general criteria.

Let $(M, J, \Omega,\theta)$ be a Vaisman manifold with corresponding fundamental vector fields $B$ and $A=JB$.  Then $A,B\in\aut(M,J,\Omega)$ generate a holomorphic $\RR^2$ action on $M$, and we will denote by $G$ the image of $\RR^2$ in $\Aut_0(M,J,\Omega)$. Since the Lie group $\Aut_0(M,J,\Omega)$ is compact, we can take the closure of $G$ in it, obtaining thus a compact torus $\TT\subset \Aut_0(M, J,\Omega)$. The torus $\TT$ is not purely real, since both $A$ and $B$ are in $\Lt\cap J\Lt$. In fact, we have:

\begin{proposition}\label{purr}
Let $(M,J,[\Omega],[\theta]_{dR})$ be a strict LCK manifold and $\TT\subset \Aut_0(M,J,[\Omega])$ be a compact torus. If $\TT$ is not purely real, then $[\Omega]$ is Vaisman and $\Lt\cap  J\Lt=\RR\{A,B\}$, where $B=-JA$ is the Lee vector field of some Vaisman metric in $[\Omega]$.
\end{proposition}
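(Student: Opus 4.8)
The plan is to extract from the hypothesis a single distinguished holomorphic Killing field, which will turn out to be (proportional to) the Lee field of a Vaisman metric, with the whole argument resting on one vanishing statement. First I would replace $\Omega$ by the Gauduchon metric of its conformal class: it is unique up to scaling and is therefore preserved by the connected group $\TT$, so I may assume that $\Omega$ and its Lee form $\theta$ are $\TT$-invariant. Then every $X\in\Lt$ is a holomorphic Killing field and $\theta(X)\in\RR$ is constant, since $\LL_X\theta=d(\theta(X))=0$. On $V:=\Lt\cap J\Lt$ I consider $\Theta(X):=\theta(X)+\mi\,\theta(JX)$, whose kernel $\{X\in V:\theta(X)=\theta(JX)=0\}$ is a $J$-invariant subspace; note $V$ is itself $J$-invariant and nonzero because $\TT$ is not purely real. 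Everything will follow once I establish the

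\textbf{Vanishing Lemma.} \emph{If $Y$ is a holomorphic Killing field such that $JY$ is also Killing and $\theta(Y)=\theta(JY)=0$, then $Y=0$.}

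To prove it I set $\alpha:=\iota_Y\Omega=g(JY,\cdot)$, the metric dual of $JY$, so that $\abs{\alpha}^2=\abs{Y}^2$ and $\langle\alpha,\theta\rangle=\theta(JY)=0$. From $\LL_Y\Omega=0$ together with $d\Omega=\theta\wedge\Omega$ one computes $d_\theta\alpha=-\theta(Y)\,\Omega=0$, hence $d\alpha=\theta\wedge\alpha$. Since $JY$ is Killing, $\nabla\alpha$ is antisymmetric, so $\nabla\alpha=\tfrac12 d\alpha=\tfrac12\,\theta\wedge\alpha$; contracting with $\alpha$ and using $\langle\alpha,\theta\rangle=0$ yields $d\abs{\alpha}^2=\abs{\alpha}^2\,\theta$, i.e. $d_\theta\abs{\alpha}^2=0$. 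As $\Omega$ is strict, $\theta$ is not exact, so \ref{ldtheta} forces $\abs{\alpha}^2=0$ and $Y=0$. The crux, and the step I expect to be the main obstacle to \emph{find}, is exactly the observation that $\abs{\iota_Y\Omega}^2$ is $d_\theta$-closed, which collapses what looks like a Bochner computation into a one-line application of \ref{ldtheta}.

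Granting the lemma, $\ker\Theta=0$: for $X\in\ker\Theta$ both $X$ and $JX$ lie in $\Lt$, hence are holomorphic Killing, and $\theta(X)=\theta(JX)=0$, so $X=0$. Thus $\Theta\colon V\to\CC$ is injective, and since $V$ is a nonzero complex subspace this gives $\dim_\RR V=2$ together with a unique $C\in V$ satisfying $\theta(C)=1$, $\theta(JC)=0$. Because $\theta(C)=1\neq0$, the circle generated by $C$ is vertical for $\tau=[\theta]$ by \ref{hor}, so \ref{crPotential} yields an LCK metric with positive potential in the class. Moreover $\theta(JC)=0$ means that on the minimal \Ka\ cover $\LL_{JC}\omega=-\theta(JC)\omega=0$, i.e. $\Omega$ is $JC$-invariant; by the Remark following \ref{crPotential} the constructed metric is then Vaisman. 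Hence $[\Omega]$ is Vaisman.

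It remains to identify $\Lt\cap J\Lt$ with a Lee/anti-Lee plane. Let $\Omega'\in[\Omega]$ be the Vaisman metric just obtained, with Lee form $\theta'$, Lee field $B'$ and anti-Lee field $A'=JB'$; being Gauduchon, $\Omega'$ is $\TT$-invariant, so $C$ and $JC$ are holomorphic Killing fields for it, and $\theta'=(B')^\flat$ with $\norm{B'}\equiv k$. The relations $\theta'(C)=1$, $\theta'(JC)=0$ give $g(B',C)=1$ and $g(A',C)=0$, so I may decompose $C=k^{-2}B'+C_{\HA}$ with $C_{\HA}$ orthogonal to $B'$ and $A'$. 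Then $C_{\HA}=C-k^{-2}B'$ is holomorphic Killing, $JC_{\HA}=JC-k^{-2}A'$ is Killing, and $\theta'(C_{\HA})=\theta'(JC_{\HA})=0$; applying the Vanishing Lemma to $Y=C_{\HA}$ forces $C_{\HA}=0$, whence $C$ is proportional to $B'$. Setting $B:=B'$ and $A:=A'=JB$ we conclude $\Lt\cap J\Lt=\RR\{C,JC\}=\RR\{A,B\}$, with $B=-JA$ the Lee field of a Vaisman metric in $[\Omega]$, as claimed.
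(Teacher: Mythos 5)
Your Gauduchon-gauge reduction, your Vanishing Lemma, and your final identification of $\Lt\cap J\Lt$ are all correct: the Lemma is a Killing-field variant of the computation the paper performs with $\iota_{[C,D]}\Omega=0$, and your last step (subtracting $k^{-2}B'$ and applying the Lemma again) is a clean alternative to the paper's appeal to the uniqueness of Vaisman metrics in a conformal class. The gap is in the middle step, where you conclude that $[\Omega]$ is Vaisman. You invoke \ref{crPotential} for ``the circle generated by $C$'', but $C$ is the \emph{unique} element of $V=\Lt\cap J\Lt$ with $\theta(C)=1$, $\theta(JC)=0$, so you have no freedom to perturb it, and there is no reason for its flow to be periodic: $\exp(\RR C)$ is a one-parameter subgroup of $\TT$ whose closure may be a subtorus of dimension at least $2$. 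This is exactly the irregular case, which occurs already for generic diagonal Hopf manifolds, where $C$ turns out to be the Lee field. So the hypothesis of \ref{crPotential} --- an actual $\Ss^1\subset\Aut(M,J)$ --- is not verified. Replacing $C$ by a nearby element $\xi\in\Lt$ generating a circle with $\theta(\xi)\neq 0$ does give a vertical $\Ss^1$, but then either $J\xi\notin\Lt$ or $\theta(J\xi)\neq 0$ (inside $V$ your two normalizations pin down $\RR C$ exactly), and your second ingredient collapses. One could repair this by observing that the \emph{proof} of \ref{crPotential} never uses periodicity of the flow of $C$, only completeness and invariance of $(\Omega,\theta)$ --- but that requires reworking that proof, not citing its statement.

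The second, more serious, problem is that the Vaisman property itself is drawn from the parenthetical assertion in the Remark following \ref{crPotential} (``if $\Omega$ is already $JC$-invariant, which will imply that the metric is Vaisman''). That assertion is stated in the paper without proof, and proving it is essentially the content of \ref{purr}: in the $JC$-invariant case the construction of \ref{crPotential} returns $g=f$ and the metric $\norm{C}_\Omega^{-2}\Omega$, and one still has to show that this rescaled metric is Vaisman. The paper does this by a direct computation which your proposal is missing: setting $\Omega'=f^{-1}\Omega$ with $f=\norm{C}^2_\Omega$ and $\theta'=\theta-d\ln f$, a Cartan-formula calculation with $\eta:=\iota_C\Omega'$ gives $0=\LL_{JC}\eta=-J\eta-\theta'$, i.e. $\iota_C\Omega'=J\theta'$, so $C$ is the Lee field of $\Omega'$; since $C$ is holomorphic and Killing for $\Omega'$, one gets $2\nabla\theta'=d\theta'=0$. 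Your argument needs this computation (or an equivalent one, e.g. verifying the hypotheses of \ref{potV} for $\Omega'$) spelled out; as written, the crucial step rests on an unproved remark and on a circle that need not exist.
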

\begin{proof}
Choose a $\TT$-invariant LCK structure $(\Omega,\theta)$ in the conformal class $[\Omega]$, so that for any $X\in\Lt$, $d(\theta(X))=\LL_X\theta=0$. Let $0\neq C\in \Lt$ with $D:=JC\in\Lt$. Then both $\theta(C)$ and $\theta(D)$ are constant. However, we cannot have $\theta(C)=\theta(D)=0$. Indeed, if it was the case, then: 
\begin{align*}
0=&\iota_{[C,D]}\Omega=\LL_C\iota_D\Omega-\iota_D\LL_C\Omega=\\
=&d\iota_C\iota_D\Omega+\iota_Cd\iota_D\Omega=\\
=&d(-\norm{C}^2)+\theta\iota_C\iota_D\Omega
=d_\theta(-\norm{C}^2)
\end{align*}
implying, by \ref{ldtheta}, that $\norm{C}^2=0$, contradiction.
Hence, if $\theta(C)=a$ and $\theta(D)=b$, then $X:=aD-bC\neq0$ still verifies $X\in\Lt$ and $JX\in\Lt$ and, moreover, $\theta(X)=0$, so $\theta(JX)\neq 0$. Therefore, we can suppose from the beginning that $\theta(C)=1$ and $\theta(D)=0$.

Let $f:=\norm{C}^2_\Omega$, which is an everywhere positive function since $C$ cannot have any zeros. Take $\Omega':=\frac{1}{f}\Omega$, with corresponding Lee form $\theta'=\theta-d\ln f$. Then, since $f$ is preserved by both $C$ and $D$, we still have $\theta'(C)=1$ and $\theta'(D)=0$, and $C,D\in\aut(M,J,\Omega')$. 

Let $\eta:=\iota_C\Omega'$. Then we have:
\begin{equation*}
d\eta=\LL_C\Omega'-\iota_Cd\Omega'=-\theta'(C)\Omega'+\theta'\wedge\eta
\end{equation*} 
or also $\Omega'=d_{\theta'}(-\eta)$. Since $D$ preserves both $C$ and $\Omega'$, it also preserves $\eta$. Moreover, we have $1=\norm{C}^2_{\Omega'}=\eta(D)$. Hence we get:
\begin{align*}
0=&\LL_{D}\eta=d\iota_D\eta+\iota_Dd\eta=\iota_D(-\Omega'+\theta'\wedge\eta)=\\
=&-J\eta+\theta'(D)\eta-\theta'\eta(D)=-J\eta-\theta'.
\end{align*}
Finally, this implies that $\eta=J\theta'$, so that $C$ is actually the Lee vector field $B$ of $\Omega'$. Since $C$ is holomorphic and preserves $\Omega'$, it is also Killing, so $2\nabla\theta'=d\theta'=0$, that is, $\Omega'$ is Vaisman.

Finally, since a Vaisman metric is unique in its conformal class up to multiplication by constants, it follows that $\Lt\cap J\Lt=\RR\{C,D\}=\RR\{A,B\}$. 
\end{proof}

Note that this immediately implies \ref{notLCK}.

\begin{proof}[\textbf{Proof of \ref{thVa}}]
As we already noted at the beginning of the section, if $M$ admits a Vaisman metric then the corresponding holomorphic vector fields $B$ and $A=JB$ sit in the Lie algebra of a torus in $\Aut_0(M,J)$.

Conversly, suppose $\TT\subset\Aut_0(M,J)$ is not purely real. Take any LCK metric $(\Omega,\theta)$ and average it over $\TT$, in order to get a $\TT$-invariant LCK metric. Hence we have $\TT\subset \Aut_0(M,J,\Omega)$, and we can apply \ref{purr} in order to get the conclusion.  
\end{proof}

\section{Maximal torus actions}

The main goal of this section is to give a proof of \ref{maxTor}, as a consequence of the previous results, together with our result concerning toric LCK manifolds of \cite{is17}. 

Let $(M,J,[\Omega],[\theta])$ be a compact LCK manifold. There are two natural Lie algebras of vector fields one can consider in this context, which we present next. 

\begin{definition}
A vector field $X\in\Gamma(TM)$ is called \textit{horizontal} for $([\Omega],[\theta])$ if $\LL_X\Omega=\theta(X)\Omega$ for some (and hence any) form $\Omega\in[\Omega]$. We denote by $\aut'(M,[\Omega])$ the set of horizontal vector fields, and note that:
\begin{equation*}
\aut'(M,[\Omega])\subset\aut(M,[\Omega]):=\{X\in\Gamma(TM)|\LL_X\Omega=f_X\Omega, \ \ f_X\in\ce(M)\}.
\end{equation*}
Moreover, $\aut'(M,[\Omega])$ inherits the structure of a Lie subalgebra of $\Gamma(TM)$.
\end{definition}

\begin{remark} If a vector field generates an $\Ss^1$-action and is a horizontal vector field for $([\Omega],[\theta])$, then the $\Ss^1$-action is horizontal with respect to $[\theta]$ in the sense of \ref{defTor}. 
\end{remark}

Inside $\aut'(M,[\Omega])$ there is another natural Lie algebra, namely the one given by twisted Hamiltonian vector fields.

\begin{definition}
A vector field $X\in\Gamma(TM)$ is called \textit{twisted Hamiltonian} for $([\Omega],[\theta])$ if for some (and so any) representative $\Omega\in[\Omega]$, there exists $f_X\in\ce(M)$ so that $\iota_X\Omega=d_\theta f_X$. Twisted Hamiltonian vector fields form a Lie subalgebra of horizontal vector fields, denoted by $\ham(M,[\Omega])$.
\end{definition}

Note that these definitions are conformally invariant. The claims that are made above are easy to check, but for the complete proofs and for a motivation of these definitions, one can consult the paper of Vaisman \cite{v85}, where they were first considered.

In particular, we call an action of a connected Lie group $G$ on $(M,J,[\Omega])$ twisted Hamiltonian if $\Lie(G)\subset\ham(M,[\Omega])$. Moreover, if $n$ is the complex dimension of $M$, the manifold $(M,J,[\Omega])$ together with a torus $\TT^n$ that acts on the manifold effectively by biholomorphisms and in a twisted Hamiltonian way is called a \textit{toric LCK manifold}. These kind of manifolds were studied recently in \cite{p}, \cite{mmp} and \cite{is17}. 

\begin{remark}\label{isotropic}
It was shown in \cite[Proposition~3.9]{mmp}, although not explicitly stated, that the orbits of a horizontal torus action of $\TT^n$ on $(M,[\Omega])$ are isotropic for $\Omega$. The main point is that, if $(\Omega,\theta)$ is $\TT^n$-invariant, then for any $X,Y\in \Lie(\TT^n)\subset\aut(M)$, as $X,Y\subset\ker\theta$, we have:
\begin{align*}
0=&\iota_{[X,Y]}\Omega=\LL_X\iota_Y\Omega-\iota_Y\LL_X\Omega=d\iota_X\iota_Y\Omega+\iota_Xd\iota_Y\Omega=d_\theta(\Omega(Y,X)).
\end{align*}
This implies, by \ref{ldtheta}, that $\Omega(Y,X)=0$.
\end{remark}

It is not difficult to see that if $[\Omega]$ is exact, then horizontal actions of compact tori coincide with twisted Hamiltonian ones, see the above references for details. Also, as shown in \cite[Lemma~3.7]{mmp}, this is also the case if $\hat M$ is simply connected. In fact, when the dimension of the torus is maximal, we do not need any hypothesis for this equivalence to hold. The proof of this follows the lines of the one from \cite{is17}, for this matter we will skip some of the details:

\begin{proposition}\label{tHam}
Let $(M,J,[\Omega])$ be a compact LCK manifold of complex dimension $n$ and let $\TT^n$ be a torus that acts effectively by biholomorphisms on the manifold. Then the action is twisted Hamiltonian if and only if it is horizontal.
\end{proposition}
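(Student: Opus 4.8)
\emph{Plan.} The implication from twisted Hamiltonian to horizontal is immediate: it is exactly the inclusion $\ham(M,[\Omega])\subset\aut'(M,[\Omega])$ recorded above. So the content is the converse, which I would organize as follows. First, average an LCK representative so that $(\Omega,\theta)$ is $\TT^n$-invariant. Then $\LL_X\Omega=0$ for every $X\in\Lt$ by invariance, and since the action is horizontal it lifts to $\hat M$, so \ref{hor} gives $\theta(X)=0$. A short Cartan computation then yields $d\iota_X\Omega=\LL_X\Omega-\iota_X(\theta\wedge\Omega)=\theta\wedge\iota_X\Omega$, i.e. $d_\theta(\iota_X\Omega)=0$; thus each $\iota_X\Omega$ is $d_\theta$-closed. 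Being twisted Hamiltonian means precisely that $\iota_X\Omega=d_\theta f_X$ for some $f_X$, so everything reduces to showing that the $d_\theta$-class of $\iota_X\Omega$ vanishes. On the minimal Kähler cover this is equivalent to $\iota_X\omega=d(\e^{-\phi}p^*f_X)$, that is, to the lifted action on $\hat M$ being Hamiltonian with a $\rho^{-1}$-equivariant moment map.

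I would split this into two steps. The equivariance step is soft: if $\iota_X\omega=dh$ for some $h$ on $\hat M$, then each deck transformation $\gamma\in\Gamma$ satisfies $\gamma^*h=\rho(\gamma)^{-1}h+c(\gamma)$, so the moment map intertwines the $\Gamma$-action with an affine action on $\Lt^*$ whose linear parts are the scalings $\rho(\gamma)^{-1}$. Since $M$ is strictly LCK, $\rho$ is nontrivial, so some $\gamma_0$ acts as a genuine contraction with a unique fixed point; and because $\Gamma$ embeds into $(\RR_{>0},\cdot)$ it is abelian, so the commuting affine maps all share that fixed point. Translating the moment map to it makes it equivariant, whence it descends to the desired $f_X$ on $M$. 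This step uses neither maximality nor the complex structure, and it isolates the only real issue: the exactness of $\iota_X\omega$ on $\hat M$.

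The hard part is precisely showing $[\iota_X\omega]=0$ in $H^1(\hat M,\RR)$, and this is where maximality must enter — in general it fails, which is exactly why the hypotheses ``$[\Omega]$ exact'' or ``$\hat M$ simply connected'' were imposed. By \ref{isotropic} the orbits are isotropic, and since $\dim_\RR\TT^n=n=\tfrac12\dim_\RR M$ the principal orbits are Lagrangian and $\TT^n$ is purely real. Consequently each $\iota_{X_i}\omega$ is basic for the projection $\pi:\hat M\to Q:=\hat M/\TT^n$: it is invariant and $\iota_{X_j}(\iota_{X_i}\omega)=\omega(X_i,X_j)=0$. Writing $\iota_{X_i}\omega=\pi^*\beta_i$, the $\beta_i$ are closed, and evaluating on the transverse directions through $\omega(X_i,JX_j)=\e^{-\phi}g(X_i,X_j)$ shows they form a coframe on the $n$-manifold $Q$; hence $Q$ carries a flat (locally Euclidean) structure in which the $\beta_i$ are parallel. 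The homothety group acts by $\gamma^*\beta_i=\rho(\gamma)^{-1}\beta_i$, with compact quotient $Q/\Gamma=M/\TT^n$ and nontrivial scale.

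Following \cite{is17}, this closed similarity structure with a genuine contraction forces the orbit space to unwrap onto (a domain in) Euclidean space on which the coframe $\beta_i$ becomes exact, i.e. $[\beta_i]=0$; since $\pi^*$ is injective on $H^1$ (the orbits being connected) this gives $[\iota_{X_i}\omega]=0$, completing the argument together with the equivariance step. I expect this cohomological vanishing — extracting the topology of $Q$ from the flat similarity structure, while handling the non-freeness of the action along the lower-dimensional orbits and the resulting orbifold points of $Q$ — to be the main obstacle; the remaining bookkeeping ($\Gamma$-equivariance of $\phi$ and of the $f_X$) is routine.
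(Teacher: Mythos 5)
Your first half is solid and matches what one would hope: the easy direction via $\ham(M,[\Omega])\subset\aut'(M,[\Omega])$, the averaging, the Cartan computation showing each $\iota_X\Omega$ is $d_\theta$-closed, the reduction of the whole problem to the vanishing $[\iota_X\omega]=0$ in $H^1(\hat M,\RR)$, and the equivariance step (the affine $\Gamma$-action on $\RR$, the unique fixed point of a contraction, abelianity of $\Gamma\hookrightarrow(\RR_{>0},\cdot)$) are all correct; that last argument is complete as stated. The genuine gap is exactly where you flag it: the cohomological vanishing itself is asserted, not proved, and the mechanism you sketch does not close it. Concretely: (a) $Q=\hat M/\TT^n$ and its compact quotient $M/\TT^n$ are not manifolds --- the action has non-principal orbits, so your closed coframe and the induced translation/similarity structure live only on the principal stratum, and no Fried-type theorem for \emph{closed} similarity manifolds applies off the shelf; this is not a technicality to be handled at the end, it is the heart of the step. (b) Even on the principal stratum, the holonomy-scaling argument is insufficient: invariance of the period group $P\subset\RR^n$ of $(\beta_1,\dots,\beta_n)$ under multiplication by $\lambda=\rho(\gamma_0)^{-1}$ does not force $P=0$ (for instance $\ZZ[\lambda,\lambda^{-1}]^n$ is a nonzero, $\lambda$-invariant subgroup), so ``contraction $\Rightarrow$ unwraps onto a domain in $\RR^n$ $\Rightarrow$ $[\beta_i]=0$'' requires a structure theorem you do not supply. (c) The appeal to \cite{is17} is not legitimate at this point: the standing hypothesis of that paper is that the torus action is already twisted Hamiltonian (toric LCK), i.e.\ it assumes the moment map whose existence is precisely what is at stake, so it cannot serve as the black box for this step.

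For comparison, the paper's proof avoids the vanishing question entirely and runs through group theory plus two external results. Horizontality gives $\Lt\subset\ker\theta$, hence (by the proof of \ref{purr}) $\Lt\cap J\Lt=0$, so the complexified torus $\TT^c=(\CC^*)^n$ acts holomorphically; on the principal stratum this action is locally free, and a dimension count shows the open dense set $\hat M_0$ is a single orbit $\TT^c/H$. Since the deck group $\Gamma$ commutes with $\TT^c$ and preserves $\hat M_0$, it embeds into $\TT^c/H$; a one-parameter subgroup $\RR\cong G\subset\TT^c/H$ through a nontrivial $\gamma\in\Gamma$ descends to a \emph{vertical} $\Ss^1$-action on $M$, and averaging $(\Omega,\theta)$ over it produces an exact, $\TT^n$-invariant LCK metric, for which the horizontal action is automatically twisted Hamiltonian, i.e.\ toric. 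Only now is \cite{is17} applicable, yielding a Vaisman structure with Lee class $[\theta]$; then the vanishing of the $d_\theta$-cohomology on Vaisman manifolds \cite{llmp} makes every $d_\theta$-closed form $d_\theta$-exact, in particular the original $\Omega$, whence its horizontal action was twisted Hamiltonian. If you wish to salvage your more direct route, you would essentially have to prove a Fried-type dichotomy for the compact similarity ``orbifold'' $M/\TT^n$, including the analysis near singular orbits --- that is, redo from scratch the structure theory the paper obtains by passing through the vertical circle and \cite{is17}.
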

\begin{proof}
Clearly, we only need to show the if direction, so let us suppose that the action is horizontal. Let us fix $\Omega\in[\Omega]$ which is $\TT^n$ invariant, with Lee form $\theta$, so that $\Lt:=\Lie(\TT^n)\subset\ker\theta$. Here and in all that follows, we identify $\Lt$ with a Lie subalgebra of $\aut(M,J)$. Also let $(\hat M,J,\Omega_K)$ be the corresponding minimal \Ka\ cover. Then, by \ref{hor}, we have a lifted action by biholomorphisms of $\TT^n$ on $(\hat M,J)$, and as it is not difficult to see, this action is also symplectic with respect to $\Omega_K$.

By the principal orbit theorem, see for instance \cite{br}, there exists a dense connected open subset $M_0\subset M$ on which $\TT^n$ acts locally freely. Moreover, as $\TT^n$ is abelian and acts effectively on $M$, it acts in fact freely on $M_0$. The preimage $\hat M_0$ of $M_0$ in $\hat M$ is exactly the dense connected open subset of $\hat M$ on which $\TT^n$ acts freely. 

Now the proof of \ref{purr} shows that $\Lt\subset\ker \theta$ implies that $\Lt\cap J\Lt=\{0\}$. Thus we have a complex linear injection $\Lt\oplus J\Lt\rightarrow \aut(M,J)$ generating a holomorphic action of $\TT^c:=(\CC^*)^n$ on $M$ and on $\hat M$. As for any $\xi\in\Lt$, $\xi$ has no zeroes on $M_0$, and as at any point of $M_0$, $\Lt$ is  orthogonal to $J\Lt$ with respect to the metric $g:=\Omega(\cdot,J\cdot)$ by \ref{isotropic}, it follows that the action of $\TT^c$ is locally free on $M_0$, and so also on $\hat M_0$. 

Let us fix $x\in\hat M_0$ and let $H=\{g\in \TT^c| g.x=x\}$, which by the above discussion is a closed discrete subgroup of $\TT^c$. We have a holomorphic embedding $F:\TT^c/H\rightarrow \hat M_0$, $g\mapsto g.x$. As $\dim_{\CC}\TT^c/H=\dim_\CC\hat M_0$, $F$ must be an open embedding, and as $\hat M_0$ is connected, $F$ is thus a biholomorphism. In particular, as $\hat M_0$ is dense in $\hat M$, $H$ acts trivially on the whole of $\hat M$, so we have a well-defined effective action of $\TT^c/H$ on $\hat M$. 

Now, as $\Gamma$ commutes with $\TT^c$ and preserves $\hat M_0$, it follows easily that $\Gamma\subset\TT^c/H$. Thus, given $\id_{\hat M}\neq \gamma\in\Gamma$, there exists a subgroup $\RR\cong G\subset \TT^c/H$ containing $<\gamma>$ as a subgroup. $G$ acts by biholomorphisms on $\hat M$ and this action clearly commutes with $\Gamma$, so descends to an effective $\Ss^1$ action on $M$. By definition, this action is vertical with respect to $[\theta]$. One can average $(\Omega,\theta)$ over this $\Ss^1$-action to obtain an exact $\TT^n$-invariant LCK metric, which is in particular toric for the given action of $\TT^n$. This is exactly the construction of \cite[Lemma~5.1]{is17}, where the details can be found.

Finally, by applying the result of \cite{is17}, it follows that there exists a Vaisman structure on $(M,J)$ with Lee class $[\theta]$. But by \cite{llmp} any $d_\theta$-closed form on $M$ is $d_\theta$-exact. Therefore, any LCK form on $(M,J)$ is exact, and so the torus action for the inital LCK form $[\Omega]$ was twisted Hamiltonian. 
\end{proof}

Now, summing up, we get:
\begin{proof}[\textbf{Proof of \ref{maxTor}}]
Let $\TT^n\subset\Aut(M,J)$, let $\tau\in\LL(M,J)$ and let $(\Omega,\theta)$ be a $\TT^n$-invariant LCK structure with $\theta\in\tau$. If $\Lt\subset\ker\theta$, then the above result implies that $(M,J)$ admits a Vaisman structure $(\Omega',\theta')$ with $\theta'\in\tau$.

If not, then identify $\TT^n$ with $(\Ss^1)^n$ and let $\xi_1,\ldots, \xi_n$ be the fundamental vector fields generating each of the $\Ss^1$-actions on $M$. As $\theta$ does not vanish on the whole of $\Lt$, there exists at least one $\xi=\xi_k$, $k\in\{1,\ldots,n\}$ generating a vertical $\Ss^1$-action with respect to $[\theta]$. Thus, by applying \ref{crPotential}, it follows that $\theta$ is the Lee form of an LCK metric with positive potential. 
\end{proof}

In what follows, we present two examples which show that \ref{tHam} is false if we forget any of the two hypotheses: either that the torus has maximal dimension, or that the action is horizontal. 

\begin{example}({\bf Non-diagonal primary Hopf surfaces})
These are complex LCK surfaces which admit a holomorphic effective action of $\TT^2$, but which are not toric. A non-diagonal primary Hopf surface $M$ is a quotient of $\CC^2-\{0\}$ by a group $\Gamma\cong\ZZ$ generated by:
\begin{equation*}
\gamma(z_1,z_2):=(\be z_1, \be^m z_2+\la z_1^m)
\end{equation*} 
with $\be,\la\in \CC^*, |\be|<1$ and $m\in\NN^*$. These manifolds are known to admit LCK metrics, cf. \cite{go}. We will show that $\Aut(M,J)$ contains a unique maximal torus $\TT^2$ which is purely real. Thus $(M,J)$ does not admit a Vaisman metric, and the action of $\TT^2$ on $(M,J)$ is not toric with respect to any LCK metric on $(M,J)$. These two facts had already been shown in \cite{b00} and in \cite{mmp} respectively. Moreover, $M$ does indeed admit an LCK metric with positive potential, confirming \ref{maxTor}.

The complex Lie algebra $\Lg:=\aut(M,J)=\aut(\CC^2-\{0\}, J)^\Gamma$ was determined in \cite{mmp}, and is given by $\Lg=\CC\{Z_1=z_1\delz{1}+mz_2\delz{2}, Z_2=z_1^m\delz{2}\}$, which is commutative. The complex flow of a vector field $W=aZ_1+bZ_2\in\Lg$ is given by:
\begin{equation*}
\Phi^u_W(z_1,z_2)=(\e^{au}z_1,\e^{amu}(z_2+buz^m_1)), u\in\CC.
\end{equation*}

Hence $\Phi^1_W=\id$ gives $a\in 2\pi i\ZZ$ and $b=0$, while $\Phi^1_W=\gamma$ gives $a\in c+2\pi i\ZZ$ and $b=\frac{\la}{\be^m}$, where $c\in\CC$ is chosen so that $\e^c=\be$. Thus we obtain two real vector fields with closed orbits $\xi_1=\Re(2\pi i Z_1)$ and $\xi_2=\Re(c Z_1+\frac{\la}{\be^m}Z_2)$, generating a maximal torus $\TT^2\subset \Aut(M,J)$. Moreover, $\TT$ is indeed purely real. 
\end{example}

\begin{example}({\bf Inoue-Bombieri surfaces of type $S^+$}). 

These are  compact complex surfaces $S_t^+$ introduced by Inoue in \cite{in}, indexed by $t\in\CC$, which admit an LCK metric if and only if $t\in\RR$, by \cite{tr} and \cite{b00}. We will show that for $t\in\RR$, the LCK structure $(\Omega,\theta)$ constructed in \cite{tr} admits a $[\theta]$-horizontal holomorphic $\Ss^1$-action which is not twisted Hamiltonian, showing thus that the dimension hypothesis on the torus in \ref{tHam} is a necessary condition.

The manifolds $S_t^+$ are obtained as quotients of $\HH\times \CC$ by discrete groups of affine transformations as follows. Let $N=(n_{ij})\in SL(2,\ZZ)$ be a matrix with real eigenvalues $\al$ and $1/\al$ ($\al>1$) and with real corresponding eigenvectors $(a_1,a_2)$ and $(b_1,b_2)$. Fix $p,q\in \ZZ$, $r\in\ZZ^*$ and $t\in\CC$, and let $(c_1,c_2)\in\RR^2$ be defined by: 
\begin{equation*}
(c_1,c_2)(\id-N^t)=(e_1,e_2)+\frac{b_1a_2-b_2a_1}{r}(p,q)
\end{equation*}
where for $i=1,2$, $e_i$ is given by:
\begin{equation*}
e_i=\frac{1}{2}n_{i1}(n_{i1}-1)a_1b_1+\frac{1}{2}n_{i2}(n_{i2}-1)a_2b_2+n_{i1}n_{i2}b_1a_2.
\end{equation*}
Let $G^+\subset\Aut(\HH\times \CC)$ be the group generated by:
\begin{align*}
g_0(w,z)&=(\al w, z+t)\\
g_i(w,z)&=(w+a_i,z+b_iw+c_i) , \text{  } i=1,2\\
g_3(w,z)&=(w,z+\frac{b_1a_2-b_2a_1}{r})
\end{align*}
and let $S^+_t:=\HH\times\CC/G^+$. Here we have denoted by $w$ the holomorphic coordinate on $\HH$ and by $z$ the holomorphic coordinate on $\CC$. Then $H^1(S^+_t,\RR)$ is of rank one, spanned by the class of $\theta:=\frac{d \Im w}{\Im w}$. 

As it was shown in \cite{in}, the only holomorphic vector field on $S^+_t$ is $Z=\delz{}$. Its complex flow is given by $\Phi^\la_Z(w,z)=(w,z+\la)$, and the only solution $\la\in\CC$ to $\Phi^1_{\la Z}\in G^+$ is $\la_0=\frac{b_1a_2-b_2a_1}{r}\in\RR$, in which case $\Phi^1_{\la_0 Z}=g_3$. Hence there exists only one torus, which is one-dimensional, $S^1\subset\Aut_0(S^+,J)$, generated by $\xi=\Re\la_0 Z$, and this torus is $[\theta]$-horizontal by \ref{hor} since $\theta(\xi)=0$. 

Now for $t\in\RR$, an LCK metric on $S^+_t$ with Lee form $\theta$ is given by:
\begin{equation*}
\Omega=i\left(\frac{1+(\Im z)^2}{(\Im w)^2}dw\wedge d\bar w -\frac{\Im z}{\Im w}(dw\wedge d\bar z+dz\wedge d\bar w)+dz\wedge d\bar z\right )
\end{equation*}
and we have $\iota_\xi\Omega=\la_0(d\Im z-\frac{\Im z}{\Im w}d\Im w)=\la_0d_\theta \Im z$. The relation that we wrote is valid on $\HH\times\CC$, however $\Im z$ does not define a function on $S^+_t$. In fact, the form $\be:=d_\theta\Im z$ is not $d_\theta$-exact on $S^+_t$. Indeed, if it were, then there would exist a $G^+$-invariant function $g\in\ce(\HH\times\CC)$ verifying $d_\theta(g-\Im z)=0$ on $\HH\times \CC$. If we let $h:=g-\Im z\in\ce(\HH\times\CC)$, then $h$ would verify $\Im w dh=d\Im w$, and so $h(w,z)=C\Im w$ for some $C\in\RR$. But then the function $g=\Im z+C\Im w$ is not $g_0$-invariant, which is a contradiction. Hence the vector field $\xi$ is not twisted Hamiltonian for $\Omega$. 

\end{example}

\section{Holomorphic torus principal bundles}\label{torusPrincipal}

Let  $\mathbf{T}=\Lt/\Lambda$ be a compact complex torus of dimension $n$, let $N$ be a compact complex manifold and let $\pi:M\rightarrow N$ be a holomorphic $\mathbf{T}$-principal bundle over $N$. Its Chern class is an element: 
\begin{equation*}
c^\ZZ(\pi)\in H^2(N,\Lambda)\cong H^2(N,\ZZ)\otimes\Lambda.
\end{equation*}
The inclusion $\Lambda\subset \Lt$ induces a natural map $H^2(N,\Lambda)\rightarrow H^2(N,\Lt)\cong H^2(N,\CC)\otimes \Lt$, and we will denote by $c(\pi)$ the image of $c^\ZZ(\pi)$ under this map.
The class $c(\pi)$ has a well defined rank. If we choose $\CC$-basis for both $\Lt$ and $H^2(N,\CC)$, then $c(\pi)$ can be represented by a $2n\times b_2(N)$ matrix over $\CC$, and then the rank of $c(\pi)$ is the rank of this matrix. 

Note that if the rank of $c(\pi)$ is 1, then there exists a minimal element $a\in\Lambda$, unique modulo sign, such that the non-torsion part of $c^\ZZ(\pi)$ writes $c^\ZZ(\pi)_0=c_1^\ZZ(\pi)\otimes a$ with $c_1^\ZZ(\pi)\in H^2(N,\ZZ)$. If $c_1(\pi)$ is the image of $c_1^\ZZ(\pi)$ under $H^2(N,\ZZ)\rightarrow H^2(N,\CC)$, then we will have $c(\pi)=c_1(\pi)\otimes a$, and again $c_1(\pi)$ is uniquely defined modulo sign. So it makes sense to ask weather $c_1(\pi)$ is a positive or negative class, i.e. weather $c_1(\pi)$ or $-c_1(\pi)$ can be represented by a \Ka\ form on $N$. In the affirmative case, we will call the class $c(\pi)$ \textit{definite}.     

By a theorem of Blanchard \cite{bl}, when $N$ is of \Ka\ type, $M$ carries a \Ka\ metric if and only if the rank of $c(\pi)$ is 0. On the other hand, a theorem of Vuletescu \cite{vu} states that if $n=1$ and the rank of $c(\pi)$ is 2, then $M$ cannot admit LCK metrics. 

As a direct application of our existence criterion for Vaisman metrics and of \ref{notLCK}, we obtain a characterisation of manifolds of $LCK$-type among all the compact torus principal bundles over compact complex manifolds.   

\begin{proposition}\label{principal}
Let $\mathbf{T}$ be a complex compact $n$-dimensional torus and $\pi:M\rightarrow N$ be a $\mathbf{T}$-principal bundle over a compact complex manifold $N$. Then $M$ admits a (strict) LCK metric if and only if $n=1$ and the Chern class of $\pi$ is of rank $1$ and definite. In this case, $M$ is of Vaisman type.
\end{proposition}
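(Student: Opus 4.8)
The plan is to extract from the principal bundle structure a compact torus of biholomorphisms whose Lie algebra is $J$-invariant, and then to play the two halves of the characterisation off against \ref{notLCK} and \ref{thVa}. Since $\mathbf{T}=\Lt/\Lambda$ acts holomorphically and effectively on $M$, its image in $\Aut(M,J)$ is a compact torus $\TT$ of real dimension $2n$, and holomorphicity of the action forces the Lie algebra $\Lt_M\subset\aut(M,J)$ of fundamental vector fields to satisfy $J\Lt_M=\Lt_M$ (the vertical distribution $\ker d\pi$ is $J$-invariant, and the fundamental field of $J_\Lt\xi$ is $J$ applied to that of $\xi$). Hence $\Lt_M\cap J\Lt_M=\Lt_M$ has dimension $2n$. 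If $n\geq 2$ this is $>2$, so \ref{notLCK} shows $M$ carries no LCK metric, which already settles the statement for $n\geq 2$ (both conditions being false). For $n=1$ the torus $\TT$ is not purely real, so if $M$ is of LCK type then, after averaging an LCK form to a $\TT$-invariant one, \ref{thVa} and \ref{purr} yield a Vaisman metric $\Omega$ in the conformal class and identify $\Lt_M\cap J\Lt_M=\RR\{A,B\}$; since the left side is all of $\Lt_M$, the anti-Lee and Lee fields $A,B=-JA$ are exactly the generators of the fibre action, so the canonical foliation of the Vaisman structure coincides with the fibration $\pi$.

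For the ``only if'' part with $n=1$, I would read off the Chern class from this Vaisman metric. Normalising $\norm{B}=1$ one has $\Omega=-dJ\theta+\theta\wedge J\theta$, and the transverse form $\Omega_0:=\Omega-\theta\wedge J\theta$ is closed, of type $(1,1)$, and basic for the foliation ($\iota_A\Omega_0=\iota_B\Omega_0=0$ and $\LL_A\Omega_0=\LL_B\Omega_0=0$), hence descends to a closed positive $(1,1)$-form $\omega_N$ on $N$; in particular $N$ is of \Ka\ type. Taking the $\Lt_M$-valued principal connection $\varpi=J\theta\otimes A+\theta\otimes B$, its curvature is $d\varpi=dJ\theta\otimes A=-\Omega_0\otimes A$, which is basic and represents $c(\pi)$. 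Being a nonzero simple tensor with \Ka\ factor $[\omega_N]$, this shows that $c(\pi)$ has rank exactly $1$ and that $c_1(\pi)=\pm[\omega_N]$ is a \Ka\ class, i.e. $c(\pi)$ is definite. (Alternatively, rank $2$ is ruled out directly by Vuletescu \cite{vu}, and rank $0$ by Blanchard \cite{bl} applied to the now-\Ka\ base $N$.)

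For the ``if'' part, suppose $n=1$ and $c(\pi)=c_1(\pi)\otimes a$ with, say, $c_1(\pi)=[\omega_N]$ a \Ka\ class on $N$. Choosing $\{a,Ja\}$ as a real basis of $\Lt\cong\CC$, I would pick a principal connection whose curvature is $\pi^*\omega_N\otimes a$; writing it as $\alpha\otimes a+\beta\otimes Ja$ produces real one-forms on $M$ with $d\alpha=\pi^*\omega_N$ and $d\beta=0$, which can be arranged to satisfy $\alpha=-J\beta$ by taking the horizontal distribution to be $J$-invariant. Setting $\theta:=\beta$ and $J\theta:=\alpha$, the form $\Omega:=\pi^*\omega_N+\alpha\wedge\beta=-dJ\theta+\theta\wedge J\theta$ is a positive $(1,1)$-form (positive transversally by $\omega_N$ and along the fibres by $\alpha\wedge\beta$) satisfying $d\Omega=\theta\wedge\Omega$; since $\theta$ is closed of constant norm with holomorphic Killing dual, $\Omega$ is Vaisman, so $M$ is of Vaisman type.

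The main obstacle I expect is the bookkeeping that converts the transverse \Ka\ form into the Chern class and back: concretely, (i) justifying that the Vaisman connection's curvature genuinely represents $c(\pi)$ and lies in a single real direction $\RR A\subset\Lt$, so that rank $=1$ and definiteness follow, and (ii) in the converse, arranging the connection so that $\alpha$ and $\beta$ are $J$-conjugate and that the resulting $\Omega$ is genuinely positive of type $(1,1)$. The positivity and the $J$-compatibility of the connection are the delicate points; once they are secured, the LCK and Vaisman conditions reduce to the short computations indicated above.
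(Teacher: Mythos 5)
Your ``only if'' direction is essentially the paper's own proof. The paper likewise obtains a Vaisman metric from \ref{thVa} (the complex torus is not purely real), uses \ref{purr} to conclude $\Lt\cap J\Lt=\RR\{A,B\}$ and hence $n=1$ (your preliminary appeal to \ref{notLCK} for $n\geq 2$ is the same fact), and then builds the identical connection: your $\varpi=J\theta\otimes A+\theta\otimes B$ is the paper's connection form written invariantly rather than in the basis of fundamental vector fields, with the same curvature computation $d\varpi=dJ\theta\otimes A$, the same identification of this basic form with a representative of $c(\pi)$, and the same positivity argument for $-dJ\theta$ on $Q=\ker\theta\cap\ker J\theta$ yielding rank one and definiteness.

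The divergence is in the ``if'' direction, and that is where your proposal has a genuine gap. The paper does not prove the converse at all; it cites Tsukada \cite{t99} for the construction. You sketch a direct construction, but the step you yourself flag as delicate is exactly the missing mathematical content: you need a principal connection whose horizontal distribution is $J$-invariant \emph{and} whose curvature is exactly $\pi^*\omega_N\otimes a$ (equivalently, $d\alpha=\pi^*\omega_N$ and $d\beta=0$ with $\alpha=-J\beta$), and no argument is given that these requirements can be met simultaneously. They can, but this takes real work rather than bookkeeping: first, $\CC$-linear splittings of the Atiyah sequence form a nonempty convex family (local holomorphic trivialisations plus a partition of unity), so a connection with $J$-invariant horizontal distribution exists; for such a connection one checks that $\alpha=-J\beta$ holds automatically (both sides vanish on horizontal vectors and agree on fundamental vector fields) and that the curvature, viewed as a $\CC$-valued form via $\Lt\cong\CC$, has no $(0,2)$-part. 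Second, since definiteness makes $N$ a compact \Ka\ manifold, and since this curvature is closed, lies in the class $[\omega_N]\otimes a$, and has no $(0,2)$-component, a Hodge-theoretic ($\partial\cp$-lemma) argument produces a $(1,0)$-form by which one corrects the connection so that the curvature becomes exactly $\omega_N\otimes a$ while preserving $J$-invariance. Only after this does your final verification (positivity of $\pi^*\omega_N+\alpha\wedge\beta$, and the Vaisman property via the Killing fundamental field dual to the closed form $\theta$) go through. As written, the converse is an outline with its central step unproved; either fill it in along the lines above, or do what the paper does and simply quote \cite{t99}.
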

\begin{proof}
Suppose that $M$ admits a strict LCK metric. The complex torus $\mathbf{T}$ acts holomorphically and effectively on $M$, so, by \ref{thVa}, $M$ admits a Vaisman metric $(\Omega,\theta)$. Let $B$ be the Lee vector field with $\theta(B)=1$ and $A:=JB$. By  \ref{purr}, $n=1$ and $\Lt=\Lie(\mathbf{T})$ is spanned by $A$ and $B$. Here, we identify $\Lt$ with its isomorphic image as a subalgebra of $\Gamma(TM)$.

Since the $\mathbf{T}$-invariant $1$-forms $\theta_1=J\theta$ and $\theta_2=\theta$ verify $\theta_i(X_j)=\delta_{ij}$, for $i,j=1,2$, where $X_1=A$ and $X_2=B$, there will exist some linear combination of them giving a connection form $\al\in\ce(T^*M\otimes \Lt)$ in $\pi$. More precisely, if we denote by $\xi_1$, $\xi_2$ the fundamental vector fields of the action, and let $G=(g_{ij})$ be the matrix of $\{X_1, X_2\}$ in the basis $\{\xi_1$, $\xi_2\}$ of $\Lt$, then the connection form will be given by:
\begin{equation*}
\al:=(g_{11}\theta_1+g_{21}\theta_2)\otimes\xi_1+(g_{12}\theta_1+g_{22}\theta_2)\otimes\xi_2.
\end{equation*}
Indeed, it is $\mathbf{T}$-invariant and we have $\al(\xi_i)=\xi_i$ for $i=1,2$. Moreover, since $d\theta=0$, its curvature is: 
\begin{equation*}
\Theta:=d\al=d\theta_1\otimes g_{11}\xi_1+d\theta_1\otimes g_{12}\xi_2=dJ\theta\otimes A.
\end{equation*}
It is a basic form, so given by $\Theta=\pi^*\eta\otimes A$, with $\eta\in\Omega^2(N)$, and $\eta\otimes A$ represents the Chern class $c(\pi)\in H^2(N,\Lt)$. Then clearly $c(\pi)$ is of rank 1, and moreover, it is definite since the form $-\eta$ is a \Ka\ form on $N$. The last assertion comes from the fact that, as $\Omega$ is Vaisman, we have $-dJ\theta=\Omega-\theta\wedge J\theta$, so the $(1,1)$-form $-dJ\theta$ is strictly positive on $Q:=\ker \theta\cap \ker J\theta\subset TM$. But $Q$ is exactly the horizontal distribution given by the connection $\al$, and so identifies with $TN$ via $\pi_*$.

The converse statement is a well known result, we send the reader to \cite{t99} for the detailed construction.
\end{proof}

\section{Analytic irreducibility of complex manifolds of LCK type}\label{anIrred}

It is not very difficult to see that a product metric cannot be LCK (\cite{v80}), but whether an LCK manifold must be analytically irreducible is still an open question.  Under additional hypotheses, the answer is known to be positive (\cite{t99}, \cite{opv}). In this section we wish to enlarge the list of hypotheses implying the analytic irreducibility of the manifold. 

One of the results in this direction  is due to Tsukada \cite{t99}, which we can also obtain as a direct consequence of \ref{thVa}:

\begin{proposition}(\cite{t99})\label{iredva}
Let $M_1$ and $M_2$ be two compact complex manifolds of Vaisman type. Then $M:=M_1\times M_2$ admits no LCK metric.
\end{proposition}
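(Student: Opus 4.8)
The plan is to reduce the claim to Corollary \ref{notLCK} by producing on $M = M_1 \times M_2$ a compact torus of biholomorphisms whose Lie algebra intersects its own $J$-image in dimension at least $4$.

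First I would use the Vaisman hypothesis on each factor to manufacture the required symmetries, exactly as recorded at the start of Section \ref{eVaisman}. Fixing a Vaisman metric on $M_i$ with Lee and anti-Lee vector fields $B_i$ and $A_i = J_i B_i$, these are real holomorphic and Killing, so their flows lie in the compact isometry group and their closure is a compact torus $\TT_i \subset \Aut_0(M_i, J_i)$ whose Lie algebra $\Lt_i$ contains both $A_i$ and $B_i$. Since $J_i A_i = -B_i$ and $J_i B_i = A_i$, the plane $\RR\{A_i, B_i\}$ lies inside $\Lt_i \cap J_i \Lt_i$.

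Next I would pass to the product, endowed with the product complex structure $J = J_1 \oplus J_2$. The torus $\TT := \TT_1 \times \TT_2$ then acts componentwise by biholomorphisms, so $\TT \subset \Aut(M, J)$ with Lie algebra $\Lt = \Lt_1 \oplus \Lt_2$. Because $J$ preserves the product splitting of $TM$, all four vector fields $A_1, B_1, A_2, B_2$ (regarded on $M$ via the inclusions $TM_i \hookrightarrow TM$) belong to $\Lt \cap J\Lt$, and they are $\RR$-linearly independent, since $A_1, B_1$ are tangent to the $M_1$-directions, $A_2, B_2$ to the $M_2$-directions, and within each factor $A_i = J_i B_i$ is independent of $B_i \ne 0$. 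Hence $\dim_\RR(\Lt \cap J\Lt) \ge 4 > 2$, and Corollary \ref{notLCK} immediately gives that $(M, J)$ admits no LCK metric. Alternatively, one could argue through \ref{thVa}: an LCK metric on $M$ together with the non-purely-real torus $\TT$ would force a Vaisman structure, and then \ref{purr} would pin $\Lt \cap J\Lt$ down to the two-dimensional $\RR\{A, B\}$, contradicting the bound just obtained.

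I do not expect a genuine obstacle: all the analytic substance sits in the earlier results, and the work here is purely structural. The only points demanding care are checking that $\TT_1 \times \TT_2$ is compact and acts holomorphically --- immediate from compactness of each $\TT_i$ and the product complex structure --- and the linear independence of the four generators, which is clear from their factorwise support together with $B_i \ne 0$.
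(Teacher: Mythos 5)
Your proposal is correct and follows essentially the same route as the paper: produce the non-purely-real torus $\TT_i\subset\Aut_0(M_i,J_i)$ from the Vaisman structure on each factor (as recorded at the start of Section~\ref{eVaisman}), take the product torus $\TT_1\times\TT_2$ acting on $M$ with $\dim_\RR(\Lt\cap J\Lt)=4>2$, and invoke Corollary~\ref{notLCK}. The only difference is that you spell out the linear independence of $A_1,B_1,A_2,B_2$, which the paper leaves implicit.
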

\begin{proof}
By \ref{thVa}, the groups of biholomorphisms $\Aut(M_i)$ contain tori $\TT_i$ which are not purely real, for $i=1,2$. Then the Lie algebra $\Lt$ of the torus $\TT:=\TT_1\times \TT_2\subset \Aut(M)$ verifies $\dim_\CC\Lt\cap J\Lt=2$. Hence, by \ref{notLCK}, $M$ cannot admit an LCK metric. 
\end{proof}

Tsukada obtained \ref{iredva} as a corollary to the following result: 

\begin{theorem}(\cite{t99})
Let $(M,\Omega)$ be a compact Vaisman manifold and let $\mathcal{F}$ be the canonical foliation on $M$ generated by the Lee and the anti-Lee vector fields. Then $\mathcal{F}$ has a compact leaf.
\end{theorem}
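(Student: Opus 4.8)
The plan is to realise the leaves of $\mathcal{F}$ as orbits of a torus action, reduce the statement to the existence of a single closed orbit of the anti-Lee field, and settle that via the cone structure of compact Vaisman manifolds together with a convexity argument on a compact contact slice.

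First I would normalise the Vaisman metric so that $\norm{B}=1$; then $A=JB$ and $B$ are commuting real holomorphic Killing fields, orthonormal at every point, spanning $T\mathcal{F}$. They generate an $\RR^2$-action whose closure in the compact group $\Aut_0(M,J,\Omega)$ is a torus $\TT$ of some dimension $k\geq 2$, and the leaves of $\mathcal{F}$ are exactly the $\RR^2$-orbits. Since every leaf is dense in the $\TT$-orbit containing it, a leaf is compact if and only if it coincides with that $\TT$-orbit. Now the key elementary remark: because $A$ and $B$ are nowhere vanishing and pointwise independent, the evaluation $\RR\{A,B\}\to T_xM$ is injective for each $x$, so $\RR\{A,B\}\cap\Lt_x=0$, where $\Lt_x$ denotes the isotropy subalgebra at $x$; hence every $\TT$-orbit has dimension at least $2$. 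Consequently the whole statement reduces to producing a single $\TT$-orbit of dimension exactly $2$: at such a point $x_0$ the isotropy $\Lt_{x_0}$ is a complement of $\RR\{A,B\}$ in $\Lt$, so $\exp(\RR\{A,B\})$ surjects onto $\TT/\TT_{x_0}$, and the leaf through $x_0$ then equals the compact orbit $\TT\cdot x_0$.

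To find a $2$-dimensional orbit I would pass to the minimal \Ka\ cover $p:\hat M\to M$ and invoke the structure theorem of Ornea and Verbitsky: $\hat M$ is homothetic to a \Ka\ cone $C(S)$ over a compact Sasakian manifold $S$, on which $B=r\partial_r$ is the Euler field, $A=JB$ is the Reeb field, and the deck group $\Gamma\cong\ZZ$ acts by the homothety $r\mapsto\rho r$. (Equivalently, $\theta=d\phi$ is parallel and its periods form the discrete group $\log\rho(\Gamma)\cong\ZZ$, so $\ell:=\theta|_{\Lt}$ is a rational linear form with $\ell(B)=1$, $\ell(A)=0$; thus $\Lt_0:=\ker\ell$ is a rational hyperplane and $\TT_0:=\exp(\Lt_0)$ is a codimension-one subtorus preserving $\phi$, hence acting on a level set $S=\{\phi=c\}$ as Sasakian isometries with $A\in\Lt_0$ the Reeb field.) Reading a leaf of $\mathcal{F}$ off on $M=C(S)/\Gamma$, it is the $\Gamma$-quotient of the cone over the Reeb orbit of a point of $S$; this is compact precisely when that Reeb orbit is closed. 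Hence it suffices to exhibit one closed Reeb orbit on the compact Sasakian manifold $S$.

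The main obstacle is exactly this last point, which is substantive only in the irregular case, i.e. when $\TT_A:=\overline{\exp\RR A}\subseteq\TT_0$ has dimension $m\geq 2$ (for $m=1$ the Reeb flow is already periodic and every orbit closes). I would resolve it by convexity: $\TT_A$ preserves the contact form $\alpha$ of $S$, so the contact moment map $\mu:S\to\Lt_A^*$, $\mu(s)(\zeta)=\alpha(\zeta_s)$, has compact convex image lying in the affine hyperplane $\{L(A)=1\}$, since $\mu(s)(A)=\alpha(A)=1$ for all $s$. Over a vertex of this moment polytope the isotropy is of maximal dimension, and because $A$ never vanishes the corresponding orbit reduces to a single closed curve tangent to the Reeb field; alternatively one may simply quote Rukimbira's theorem that every compact $K$-contact manifold admits a closed Reeb orbit. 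Coning this circle and quotienting by $\Gamma$ yields the desired compact leaf of $\mathcal{F}$, completing the argument. The one genuine input beyond the torus bookkeeping is this closed-orbit existence, which is where both compactness and the non-\Ka\ (strictly Vaisman) nature of $M$ are essential.
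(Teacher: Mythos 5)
First, a point of context: the paper does not prove this statement at all; it quotes it from Tsukada \cite{t99} and uses it as a black box, so your argument must stand on its own. Its first half does: the leaves of $\mathcal{F}$ are the orbits of the image of $\RR^2$ generated by $A$ and $B$, each leaf is dense in its $\TT$-orbit, every $\TT$-orbit has dimension at least $2$ because $A,B$ are pointwise independent, and a $2$-dimensional $\TT$-orbit is automatically a compact leaf. The reduction to exhibiting one $\TT$-orbit of dimension exactly $2$ is correct and clean. The gap is in the passage to the cone. The Ornea--Verbitsky structure theorem does \emph{not} say that the deck group acts on $C(S)=S\times\RR_{>0}$ by $r\mapsto\rho r$ alone: its generator is $(x,r)\mapsto(\varphi(x),\rho r)$ for a Sasakian automorphism $\varphi$ of $S$, which is nontrivial in general. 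Already for the Hopf surface defined by $(z_1,z_2)\mapsto(\al_1z_1,\al_2z_2)$ with $|\al_1|=|\al_2|=\rho$ but $\al_1\neq\al_2$, one has $\varphi=\mathrm{diag}(\al_1/\rho,\al_2/\rho)\neq\id$ on $S^3$. Consequently your key equivalence, ``a leaf is compact precisely when the corresponding Reeb orbit is closed'', is false: the leaf through $O_s\times\RR_{>0}$ descends to a compact leaf if and only if the closed Reeb orbit $O_s$ is, in addition, invariant under $\varphi^n$ for some $n\neq 0$ (in the Hopf example above all Reeb orbits are closed, but only the two coordinate circles give compact leaves). So producing \emph{some} closed Reeb orbit via Rukimbira (or via your moment-map sketch, whose convexity claim is itself unjustified at this level of generality) does not conclude. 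The failure is not cosmetic: in the quasi-regular case the statement becomes the existence of a periodic point for the isometry $\bar\varphi$ induced on the quotient K\"ahler orbifold, and holomorphic isometries of compact K\"ahler spaces can be fixed-point free (translations of an elliptic curve); what rules this out here is precisely that $\varphi$ preserves the contact form, hence the holonomy of the orbibundle $S$ --- a constraint your proof never uses.

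The gap can be closed, and in a way that makes the cone unnecessary. Work directly on $M$ with your torus $\TT=\overline{\exp(\RR A+\RR B)}\subset\Aut_0(M,J,\Omega)$. The set of $\TT$-orbits of minimal dimension is closed; let $N$ be a connected component of it. Then $N$ is a component of the fixed locus of the subtorus $H=\exp(\Lh)$, where $\Lh$ is the (constant) isotropy subalgebra along $N$; in particular $N$ is a compact complex $\TT$-invariant submanifold to which $A$ and $B$ are tangent (one may also invoke \cite[Theorem~5.1]{v80} to see that $N$ is itself Vaisman, though this is not needed). Normalize $\norm{B}=1$, so that $-dJ\theta=\Omega-\theta\wedge J\theta$; since $N$ is a complex submanifold tangent to $A,B$, the kernel of $-dJ\theta$ restricted to $TN$ is exactly $\RR\{A,B\}$ at every point. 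For each $\xi\in\Lt$ one has $d\bigl(J\theta(\xi)\bigr)=\iota_\xi(-dJ\theta)$, so at a maximum point $x_0\in N$ of the function $J\theta(\xi)|_N$ we get $\xi_{x_0}\in\RR\{A_{x_0},B_{x_0}\}$, hence $\xi-aA-bB$ vanishes at $x_0$ and therefore lies in $\Lh$. As this holds for every $\xi\in\Lt$, we obtain $\Lt=\Lh+\RR\{A,B\}$, so the orbits in $N$ have dimension at most $2$, hence exactly $2$, which by your own reduction produces the compact leaf. (Equivalently, upstairs this amounts to replacing $\overline{\exp\RR A}$ by the closure of the group generated by the Reeb flow \emph{and} $\varphi$, and finding a $1$-dimensional orbit of that larger torus.)
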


We can further exploit this and obtain the following, more general, result:

\begin{theorem}\label{ired}
Let $M_1$, $M_2$ be two compact complex manifolds and suppose that $M_1$ is of Vaisman type. Then $M:=M_1\times M_2$ admits no LCK metric. 
\end{theorem}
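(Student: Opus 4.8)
The plan is to suppose, for contradiction, that $M=M_1\times M_2$ carries an LCK metric (with $\dim_\CC M_2\geq 1$, the statement being vacuous otherwise), and to first upgrade this to a Vaisman metric whose Lee and anti-Lee fields are tangent to the first factor. Since $M_1$ is of Vaisman type, the discussion opening Section~\ref{eVaisman} (the easy direction of \ref{thVa}) produces a torus $\TT_1\subset\Aut_0(M_1,J_1)$ that is not purely real, with $\Lt_1\cap J_1\Lt_1=\RR\{A_1,B_1\}$. Viewing $\TT_1\subset\Aut_0(M,J)$ acting trivially on $M_2$, and using that the product complex structure satisfies $JX=J_1X$ for $X$ tangent to $M_1$, I would check that $\Lt_1\cap J\Lt_1=\RR\{A_1,B_1\}\neq 0$ inside $M$ as well, so $\TT_1$ is not purely real on $M$. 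After averaging the given LCK metric over $\TT_1$, Proposition~\ref{purr} applies and yields that $[\Omega]$ is Vaisman with $\Lt_1\cap J\Lt_1=\RR\{A,B\}$, where $A,B$ are the anti-Lee and Lee fields of a Vaisman metric $\Omega$ in the class. The crucial output is that $\RR\{A,B\}=\RR\{A_1,B_1\}$, so $A$ and $B$ are tangent to $M_1$.

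Next I would exploit the transverse Kähler form of this Vaisman structure. Normalising $\|B\|=1$, the Vaisman identity gives $\Omega=-dJ\theta+\theta\wedge J\theta$, so the $2$-form $\omega_0:=-dJ\theta=\Omega-\theta\wedge J\theta$ is exact and globally defined on $M$, being $d(-J\theta)$. A direct computation with $\iota_A\Omega=-\theta$ and $\iota_B\Omega=J\theta$ shows $\iota_A\omega_0=\iota_B\omega_0=0$, and since $\omega_0$ is the transverse Kähler form it is a positive semidefinite $(1,1)$-form with kernel exactly $\RR\{A,B\}$ (this is the positivity of $-dJ\theta$ on $Q=\ker\theta\cap\ker J\theta$ already exploited in the proof of \ref{principal}).

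Now comes the key geometric observation. Because $A$ and $B$ are tangent to $M_1$, the kernel $\RR\{A,B\}$ of $\omega_0$ lies in $TM_1$ and hence meets $TM_2$ only in $0$. Fix $p_1\in M_1$ and restrict $\omega_0$ to the compact complex submanifold $M_2':=\{p_1\}\times M_2\cong M_2$. Since $J$ preserves $TM_2$ and $TM_2\cap\ker\omega_0=0$, positive semidefiniteness forces $\omega_0(v,Jv)>0$ for every nonzero $v\in TM_2$; thus $\omega_0|_{M_2'}$ is a positive definite closed $(1,1)$-form, i.e.\ a Kähler form on $M_2$. But $\omega_0|_{M_2'}$ is also exact, being the restriction of $d(-J\theta)$. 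On a compact complex manifold of positive dimension a Kähler form has $\int_{M_2'}(\omega_0|_{M_2'})^{\dim_\CC M_2}>0$, so its class is nonzero in $H^2(M_2',\RR)$, contradicting exactness. This contradiction shows $M$ admits no LCK metric.

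The main obstacle is the content of the first paragraph: one must guarantee that passing to the product does not move the Lee and anti-Lee directions off the first factor. This is precisely what \ref{purr} secures, since it identifies $\Lt\cap J\Lt$ as the two–dimensional span of the Vaisman Lee and anti-Lee fields, which here is forced to coincide with $\RR\{A_1,B_1\}\subset TM_1$. Once this tangency is in hand, the remaining step is the observation that the \emph{exact} transverse Kähler form restricts to a genuine Kähler form on the second factor. I note that this route does not even require the compact leaf theorem stated above; alternatively, one could restrict to $E\times M_2$ for a compact leaf $E\subset M_1$ and run the same positivity argument there.
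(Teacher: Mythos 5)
Your proof is correct, and its first half coincides with the paper's: both produce, via the non-purely-real torus $\TT_1$ acting trivially on the $M_2$-factor and an averaging argument feeding into \ref{purr} (equivalently \ref{thVa}), a Vaisman metric on $M$ whose Lee and anti-Lee fields span $\Lt_1\cap J\Lt_1$ and are therefore tangent to the $M_1$-factor. From that point on, however, the two arguments genuinely diverge. The paper restricts the Vaisman structure to $M_1\times\{y\}$ (using Vaisman's theorem on submanifolds tangent to the Lee field, \cite[Theorem~5.1]{v80}), invokes Tsukada's compact leaf theorem \cite{t99} to obtain an elliptic curve $E\subset M_1$, and then reaches a contradiction by viewing $E\times M_2$ as a trivial $E$-principal bundle carrying an LCK metric, which is excluded by \ref{principal}. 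You instead observe that the normalized Vaisman identity makes $\omega_0:=-dJ\theta=\Omega-\theta\wedge J\theta$ an exact, semipositive $(1,1)$-form whose kernel is exactly $\RR\{A,B\}\subset TM_1$, so that its restriction to $\{p_1\}\times M_2$ is an exact \Ka\ form on a compact complex manifold of positive dimension --- impossible, since $\int_{M_2}\omega_0^{\dim_\CC M_2}$ would then be both strictly positive and zero by Stokes' theorem. Your route is more elementary and self-contained: it needs neither the compact leaf theorem, nor Vaisman's submanifold theorem, nor \ref{principal}, but only the positivity of $-dJ\theta$ on $Q=\ker\theta\cap\ker J\theta$, which the paper itself exploits in proving \ref{principal}. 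In effect you have re-proved inline exactly the instance of Tsukada's tangency theorem (\cite[Thm~3.2]{t97}, the one used in \ref{corired}) that is needed here: a compact complex submanifold everywhere transverse to the canonical foliation cannot exist. What the paper's longer route buys is thematic coherence --- Theorem \ref{ired} is presented there as an application of the bundle classification \ref{principal} --- whereas your argument is the shorter and more direct path to the same contradiction.
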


\begin{proof}
Suppose $M$ admits some LCK metric. Then, for any $x\in M_1$, this metric restricted to $\{x\}\times M_2\cong M_2$ gives an LCK metric on $M_2$.

Since $M_1$ is of Vaisman type, there exists $\TT_1\subset\Aut(M_1)$ whose Lie algebra $\Lt_1$ verifies $\dim_\CC \Lt_1\cap J\Lt_1=1$. The induced torus $\TT=\TT_1\times \{\id_{M_2}\}\subset\Aut(M)$ is still not purely real, so by  \ref{thVa},  $M$ is of Vaisman type and $\Lt:=\Lie(\TT)$ contains the corresponding Lee vector field $B$.

Let $\Omega$ be a Vaisman metric on $M$ which, after eventual averaging, is $\TT$-invariant. Then for any $y\in M_2$, $\Omega$ restricted to $M_1\times \{y\}\cong M_1$ must be Vaisman. Indeed, by construction, the Lee vector field $B$ is tangent to $M_1$, and \cite[Theorem~5.1]{v80} states that any submanifold of a Vaisman manifold that is tangent to the Lee vector field is again Vaisman with the induced metric. Let now $E\subset M_1$ be a closed leaf of the canonical foliation on the Vaisman manifold $M_1$, as in the above theorem. Clearly, after choosing $\mathit{O}\in E$, $E$ has the structure of an elliptic curve whose tangent bundle is generated by $B$ and $JB$ restricted to $E$. Hence, the submanifold $i:Y=E\times M_2 \rightarrow M$ together with $i^*\Omega$ is Vaisman. At the same time, $Y\rightarrow M_2$ is a trivial $E$-principal bundle, so we arrive at a contradiction via \ref{principal}.

\end{proof}

Also, using the result which states that a submanifold of a Vaisman manifold must contain the leaves of the canonical foliation, one has:

\begin{proposition}\label{corired}
A compact complex manifold of Vaisman type is holomorphically irreducible. 
\end{proposition}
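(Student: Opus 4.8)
The plan is to argue by contradiction, exploiting the quoted rigidity of complex submanifolds of a Vaisman manifold. Suppose $M$ is of Vaisman type but analytically reducible, so that there is a biholomorphism $M\cong M_1\times M_2$ with $\dim M_1,\dim M_2>0$. I would fix a Vaisman metric $(\Omega,\theta)$ on $M$, with Lee vector field $B$ normalised so that $\norm{B}=1$, anti-Lee field $A=JB$, and canonical foliation $\mathcal{F}$ spanned by $A$ and $B$. Since $B$ is nowhere vanishing and $A=JB$, the vectors $A_p,B_p$ are linearly independent at every $p\in M$ (otherwise $JB=\lambda B$ with $\lambda\in\RR$ would force $\lambda^2=-1$), so $\langle A_p,B_p\rangle$ is a $2$-dimensional subspace of $T_pM$ and each leaf of $\mathcal{F}$ is a $J$-invariant surface.

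Next I would introduce the two factors as complex submanifolds. Fixing a base point $(x,y)\in M_1\times M_2$, the slices $S_1:=M_1\times\{y\}$ and $S_2:=\{x\}\times M_2$ are complex submanifolds through $(x,y)$, with
\begin{equation*}
T_{(x,y)}S_1=T_xM_1\oplus 0,\qquad T_{(x,y)}S_2=0\oplus T_yM_2
\end{equation*}
inside $T_{(x,y)}M=T_xM_1\oplus T_yM_2$. Applying the cited result — that a complex submanifold of a Vaisman manifold contains the leaves of the canonical foliation through its points — to each of $S_1$ and $S_2$, I conclude that the leaf of $\mathcal{F}$ through $(x,y)$ lies inside both slices, and hence that $\langle A_{(x,y)},B_{(x,y)}\rangle$ is contained in $T_{(x,y)}S_1$ and in $T_{(x,y)}S_2$ simultaneously.

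The contradiction then follows from a direct-sum computation: since $T_{(x,y)}S_1\cap T_{(x,y)}S_2=0$, the subspace $\langle A_{(x,y)},B_{(x,y)}\rangle$ would have to be trivial, contradicting its being $2$-dimensional. Thus no nontrivial product decomposition exists, and $M$ is holomorphically irreducible.

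I expect the only genuinely delicate point to be the precise invocation of the submanifold result: one must verify that it applies to arbitrary complex submanifolds — here the two slices — and that it delivers the full tangency $\langle A,B\rangle\subseteq TS$ at each point, rather than a weaker containment. Everything else — the nonvanishing and independence of $A$ and $B$, and the triviality of the intersection $T_{(x,y)}S_1\cap T_{(x,y)}S_2$ — is immediate.
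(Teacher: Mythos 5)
Your proof is correct and follows essentially the same route as the paper: both apply Tsukada's theorem (that complex submanifolds of a compact Vaisman manifold contain the leaves of the canonical foliation) to the two slices $M_1\times\{y\}$ and $\{x\}\times M_2$ through a point, and derive a contradiction from the fact that these slices meet transversally in a single point while the leaf is positive-dimensional. Your write-up merely makes explicit, via the tangent-space computation $T_{(x,y)}S_1\cap T_{(x,y)}S_2=0$, the contradiction that the paper dismisses with ``which is impossible.''
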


\begin{proof}
Let $M=M_1\times M_2$ be the compact complex manifold with the product complex structure, and suppose it admits a Vaisman metric $\Omega$ with corresponding canonical foliation $\mathcal{F}$ generated by $B, JB$. Then, by \cite[Thm~3.2]{t97}, for any $(x_1,x_2)\in M$, both the submanifolds $M_1\times\{x_2\}$ and $\{x_1\}\times M_2$ of $M$ contain the leaves of $\mathcal{F}$, which is impossible.
\end{proof}

On the other extreme, we have the following result, also obtained in \cite{opv} in a different manner:
\begin{theorem}\label{produs}
Let $M_1$, $M_2$ be two connected complex manifolds, and suppose that $M_1$ is compact and verifies the $\partial\cp$-lemma. Moreover, if $\dim_\CC M_1=1$, then suppose its genus $g$ is $1$ and $M_2$ is compact, or that $g=0$. Then $M:=M_1\times M_2$ admits no (strict) LCK metric.
\end{theorem}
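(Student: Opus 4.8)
The plan is to assume, for contradiction, that $M=M_1\times M_2$ carries a strict LCK structure $(\Omega,\theta)$ and to show that $\theta$ must in fact be exact. The guiding observation is that for each $y\in M_2$ the restriction of $(\Omega,\theta)$ to the complex submanifold $M_1\times\{y\}$ is again an LCK structure, with Lee form $\theta|_{M_1\times\{y\}}$, since $d(\Omega|_{M_1})=(\theta\wedge\Omega)|_{M_1}=\theta|_{M_1}\wedge\Omega|_{M_1}$. I would handle the cases $\dim_\CC M_1\ge 2$ and $M_1=\PP^1$ by a single cover-and-volume argument, and treat the genus-one case separately via the existence criterion for Vaisman metrics.

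First I would establish the auxiliary statement: \emph{a compact complex manifold $X$ of dimension $m\ge 2$ satisfying the $\partial\cp$-lemma carries no strict LCK metric}. Using the $\partial\cp$-lemma one has the Hodge decomposition in degree one, so the Lee class is represented by $\al+\bar\al$ with $\al$ a holomorphic (hence closed) $1$-form; after a conformal change of $\Omega$ I may assume $\theta=\al+\bar\al$. From $d\Omega=\theta\wedge\Omega$ one gets $d(\Omega^{m-1})=(m-1)\theta\wedge\Omega^{m-1}$, whence, using $d\bar\al=0$ and $\bar\al\wedge\bar\al=0$, $d(\bar\al\wedge\Omega^{m-1})=-(m-1)\,\bar\al\wedge\al\wedge\Omega^{m-1}$. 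Integrating over $X$ gives $\int_X\mi\,\al\wedge\bar\al\wedge\Omega^{m-1}=0$; as the integrand is a nonnegative volume form, $\al\equiv 0$, so $[\theta]=0$. Consequently, when $\dim_\CC M_1\ge 2$ each slice has $[\theta|_{M_1\times\{y\}}]=0$, and the same holds trivially for $M_1=\PP^1$ since $H^1(\PP^1,\RR)=0$.

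Next I would run the cover argument. Vanishing of $[\theta|_{M_1\times\{y\}}]$ means that loops in $M_1\times\{y\}$ are sent to $1$ by the monodromy character $\rho:\pi_1(M)\to\RR_{>0}$, so $M_1\times\{y\}$ lifts to the minimal \Ka\ cover $\hat M$, producing a compact complex submanifold $L\cong M_1$ on which $\Omega_K$ restricts to a \Ka\ form; hence $\int_L\Omega_K^{\,m}>0$. For $\g\in\Gamma$, both $L$ and $\g(L)$ project to $M_1\times\{y\}$, and choosing $\delta\in\pi_1(M_2)$ whose $\rho$-image is $\g$, the lift of the cylinder $M_1\times\delta$ exhibits $\g(L)-L$ as a boundary, so $[\g(L)]=[L]$ in $H_{2m}(\hat M)$. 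Combining this with \eqref{LCK3} I obtain $\int_L\Omega_K^{\,m}=\int_{\g(L)}\Omega_K^{\,m}=\int_L\g^*\Omega_K^{\,m}=\rho(\g)^{-m}\int_L\Omega_K^{\,m}$, forcing $\rho(\g)=1$. Since $\Gamma=\rho(\pi_1(M_2))$, this gives $\rho\equiv 1$, i.e. $\theta$ exact, contradicting strictness. Note this uses only that $M_1$ is compact, not $M_2$, consistent with the hypotheses in these cases.

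For the remaining case of $M_1$ an elliptic curve, the restriction argument degenerates, because $\Omega|_{M_1}$ is of top degree and $\theta|_{M_1}$ is then unconstrained by $d\Omega=\theta\wedge\Omega$. Instead I would use that such an $M_1=E$ is a complex one-torus acting on itself by translations, so $\TT:=E\times\{\id_{M_2}\}\subset\Aut(M)$ is a compact torus whose Lie algebra is $J$-invariant and therefore \emph{not} purely real. As $M=E\times M_2$ is compact (this is where $M_2$ compact is needed) and of LCK type by assumption, \ref{thVa} produces a Vaisman metric on $M$, while \ref{corired} forces a Vaisman manifold to be holomorphically irreducible, contradicting the product decomposition. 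The main obstacle is precisely the vanishing $[\theta|_{M_1}]=0$: this is where the $\partial\cp$-lemma enters, and its failure for curves (where the Lee equation is vacuous) is exactly what dictates the separate genus hypotheses — the torus argument for genus one and the exclusion of genus $\ge 2$, for which $\Aut(M_1)$ is finite and no torus is available. A secondary technical point is the homological identification $[\g(L)]=[L]$ in the cover.
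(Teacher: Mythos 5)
Your proposal is correct, and in the main cases it takes a genuinely different route from the paper. For the step showing that the Lee class dies on the slices $M_1\times\{y\}$, you reprove from scratch the obstruction that a compact $\partial\cp$-manifold of dimension at least $2$ carries no strict LCK metric (holomorphic representative $\al+\overline{\al}$ of the Lee class plus Stokes); the paper instead cites Vaisman's result \cite{v80} after writing $\theta=p_1^*\theta_1+p_2^*\theta_2$ via K\"unneth. The substantive divergence is the passage from vanishing on slices to global exactness of $\theta$: the paper does this with \ref{thetaz}, a fiber-integration computation (decompose $\Omega$ in the product bigrading, wedge the relevant component of the LCK equation with $\Omega_1^{n-1}$, push forward along $p_2$, and obtain $d_{n\theta_2}g=0$ with $g=(p_2)_*\Omega_1^n>0$, so $\theta_2$ is exact by \ref{ldtheta}); you instead lift a slice to the minimal \Ka\ cover and play the homothety character of \eqref{LCK3} against the homological invariance of $\int_L\Omega_K^m$ over the lifted compact cylinder, forcing $\rho\equiv 1$. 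Both arguments are sound and both use only compactness of $M_1$; yours avoids K\"unneth and the bigraded computation and makes the covering-space mechanism (triviality of the monodromy character) transparent, while the paper's lemma is a reusable analytic tool --- the same push-forward technique drives \ref{iredcurve}. In the genus-one case you replace the paper's appeal to \ref{principal} (viewing $M\rightarrow M_2$ as a trivial elliptic principal bundle) by the translation torus $E\subset\Aut(M)$, which is not purely real, so that \ref{thVa} produces a Vaisman metric, contradicting the irreducibility statement \ref{corired}; this is valid and non-circular, since \ref{corired} is established before \ref{produs}, and it is essentially the same torus-action mechanism that powers the paper's \ref{principal}, only packaged through Tsukada's compact-leaf theorem rather than through the Chern-class computation.
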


\begin{proof}
Suppose $M$ admits an LCK form $\Omega$ with corresponding Lee form $\theta$. Denote by $p_i:M\rightarrow M_i$, $i=1,2$ the canonical projections. We have, by the K\"unneth formula, an isomorphism $p_1^*\oplus p_2^*:H^1(M_1,\RR)\oplus H^1(M_2,\RR)\rightarrow H^1(M,\RR)$, meaning that there exist two closed forms $\theta_i\in\ce(T^*M_i)$, $i=1,2$, such that $\theta$ is cohomologuous to $p_1^*\theta_1+p_2^*\theta_2$. After a conformal change of $\Omega$, we can suppose that $\theta=p_1^*\theta_1+p_2^*\theta_2$. 

Since an LCK metric on $M$ induces one on $M_1$, by a result of Vaisman \cite{v80}, the induced metric is globally conformal to a \Ka\ metric. Suppose moreover that $n:=\dim_\CC M_1>1$. Then this implies that $\theta_1$ is exact on $M_1$. Again, by a global conformal change of $\Omega$, we can suppose that $\theta_1=0$.
Then the conclusion follows from  \ref{thetaz} bellow. 

If $\dim_\CC M_1=1$, then the induced LCK form on $M_1$ is automatically \Ka, so we know nothing of $\theta_1$. However, if $g=1$, then $M\rightarrow M_2$ is a trivial principal elliptic bundle, so by \ref{principal}, $M$ cannot admit any strict LCK metric. If $g=0$ then $M_1$ is simply connected, so we can again apply \ref{thetaz}.
\end{proof}

\begin{lemma}\label{thetaz}
Let $M_1$ and $M_2$ be two connected complex manifolds, with $M_1$ compact. Suppose that $(\Omega,\theta)$ is an LCK form on the manifold $M=M_1\times M_2$ such that $i^*[\theta]=0$ in $H^1(M_1,\RR)$, where $i:M_1\rightarrow M$ is the inclusion $x\mapsto (x,y)$ for some $y\in M_2$. Then $M$ (and thus also both $M_1$ and $M_2$) are of \Ka\ type.
\end{lemma}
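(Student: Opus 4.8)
The plan is to reduce to the case where the Lee form is pulled back from the second factor, then to integrate the LCK equation along the compact fibre $M_1$ against a power of a \Ka\ form, and finally to invoke \ref{ldtheta} to see that the resulting Lee form is exact. Let $p_i\colon M\to M_i$ be the projections. Since $M_1$ is compact, $H^*(M_1,\RR)$ is finite-dimensional and the K\"unneth map $p_1^*\oplus p_2^*\colon H^1(M_1,\RR)\oplus H^1(M_2,\RR)\to H^1(M,\RR)$ is an isomorphism. Writing $[\theta]=p_1^*\al_1+p_2^*\al_2$ and applying $i^*$, which is the identity on the first summand and kills the second, the hypothesis $i^*[\theta]=0$ forces $\al_1=0$, so $[\theta]=p_2^*[\theta_2]$ for a closed form $\theta_2$ on $M_2$. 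Hence $\theta-p_2^*\theta_2=dh$ for some $h\in\ce(M,\RR)$, and after the conformal change $\Omega\mapsto\e^{-h}\Omega$ I may assume from the outset that $\theta=p_2^*\theta_2$. In particular $\theta$ vanishes identically on each slice $M_1\times\{y\}$, so restricting $d\Omega=\theta\wedge\Omega$ there shows that $\omega_y:=i_y^*\Omega$ is a closed positive $(1,1)$-form, i.e.\ a \Ka\ form on $M_1$; thus $M_1$ is of \Ka\ type.

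The main step is a fibre-integration argument over the compact factor. Let $\pi_!$ denote integration along the fibre $M_1$, which lowers the degree by $\dim_\RR M_1$, satisfies $\pi_!\circ d=\epsilon\, d\circ\pi_!$ with $\epsilon=\pm1$, and obeys the projection formula $\pi_!(p_2^*\al\wedge\cdot)=\pm\,\al\wedge\pi_!(\cdot)$. Fix a slice $y_0$, set $m=\dim_\CC M_1$, and let $\Phi:=\omega_{y_0}^{m-1}$, a closed form on $M_1$. Wedging $d\Omega=\theta\wedge\Omega$ with $p_1^*\Phi$ and applying $\pi_!$, the left-hand side becomes $\epsilon\, d\,\pi_!(\Omega\wedge p_1^*\Phi)$, while the right-hand side becomes $\pm\,\theta_2\wedge\pi_!(\Omega\wedge p_1^*\Phi)$, since $\theta=p_2^*\theta_2$ pulls out of the fibre integral. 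Writing $F:=\pi_!(\Omega\wedge p_1^*\Phi)$, a function on $M_2$, only the component of $\Omega$ with both legs tangent to $M_1$ contributes to the top fibre degree, so $F(y)=\int_{M_1}\omega_y\wedge\omega_{y_0}^{m-1}$. As $\omega_y$ and $\omega_{y_0}$ are positive $(1,1)$-forms on $M_1$, the integrand is a pointwise positive multiple of the volume form, whence $F>0$ everywhere on $M_2$. The identity above now reads $dF=\pm F\,\theta_2$, that is $d_{\pm\theta_2}F=0$.

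Since $M_2$ is connected and $F$ is nowhere zero, \ref{ldtheta} applied to $\pm\theta_2$ forces $\theta_2$ to be exact; concretely $\theta_2=\pm d\ln F$. Therefore $\theta=p_2^*\theta_2$ is exact on $M$, say $\theta=dG$ with $G=\pm p_2^*\ln F$, and then $\e^{-G}\Omega$ is a closed positive $(1,1)$-form, i.e.\ a \Ka\ form on $M$; restricting it to a slice $\{x\}\times M_2$ produces a \Ka\ form on $M_2$, so all of $M$, $M_1$, $M_2$ are of \Ka\ type. The routine points are the K\"unneth reduction and the conformal normalisation; the only place requiring care is the bidegree bookkeeping in the fibre integration, where I expect the minor obstacle to be confirming that (i) among $\Omega\wedge p_1^*\Phi$ only the purely $M_1$-directional part of $\Omega$ survives $\pi_!$, producing the positive function $F$, and (ii) with $\theta$ pulled back from $M_2$ the right-hand side is exactly $\pm F\theta_2$, with no contribution from the mixed component of $\Omega$. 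The precise signs $\epsilon$ in $\pi_!\circ d$ and in the projection formula are immaterial, since the conclusion $d_{\pm\theta_2}F=0$ with $F>0$ yields exactness of $\theta_2$ either way.
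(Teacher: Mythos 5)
Your proof is correct, and its engine is the same as the paper's: after the identical K\"unneth-plus-conformal-change reduction to $\theta=p_2^*\theta_2$, you fibre-integrate the LCK equation over the compact factor $M_1$ against a $(2m-2)$-form, obtain a strictly positive function $F$ on $M_2$ satisfying $d_{c\theta_2}F=0$ for a nonzero constant $c$, and invoke \ref{ldtheta} to conclude $\theta_2$ is exact. The one genuine difference lies in the form you wedge with before integrating: the paper decomposes $\Omega=\Omega_1+\Omega_{12}+\Omega_2$ and $d=d_1+d_2$ in the product bigrading, extracts the homogeneous components of $d\Omega=\theta\wedge\Omega$ (in particular $d_1\Omega_1=0$), and wedges with the \emph{varying} power $\Omega_1^{n-1}$, using $d_2\Omega_1\wedge\Omega_1^{n-1}=\tfrac1n d_2(\Omega_1^n)$ before pushing forward; you instead wedge with the pullback of the \emph{fixed} closed form $\omega_{y_0}^{m-1}$ from a reference slice, so that closedness of $p_1^*\Phi$ together with $d\circ\pi_!=\pm\,\pi_!\circ d$ and the projection formula absorbs all the bookkeeping, and no component identities are ever needed. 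What each buys: your version is structurally leaner and the only nontrivial verification is the mixed-positivity fact that $\omega_y\wedge\omega_{y_0}^{m-1}$ is a positive volume form for two positive $(1,1)$-forms (true, by simultaneous diagonalization at a point), whereas the paper's version needs only the simpler positivity $\omega_y^n>0$ but pays for it with the explicit bigraded component calculus. Both arguments are sound and yield the same conclusion.
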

\begin{proof} 
As before, after an eventual conformal change of $\Omega$, we have $\theta=p_1^*\theta_1+p_2^*\theta_2$ with $\theta_i\in\ce(T^*M_i), i=1,2$. By hypotheses, $\theta_1=df$, with $f\in\ce(M_1)$, so again, by replacing $\Omega$ with $\e^{-p_1^*f}\Omega$, we can suppose that $\theta=p_2^*\theta_2$.  

The algebra of differential forms on $M$, $\ce(\bigwedge T^*M)$, has two compatible gradings: one given by the degree of the forms, and the second one induced by the splitting $T^*M=p^*_1T^*M_1\oplus p_2^*T^*M_2$. With respect to this second splitting, write the differential $d=d_1+d_2$, and write $\Omega=\Omega_1+\Omega_{12}+\Omega_2\in\ce(\bigwedge^2 T^*M)$, where: 
\begin{equation*}
\textstyle\bigwedge^2 T^*M=\textstyle\bigwedge^2 p_1^*T^*M_1\oplus p_1^* T^*M_1\otimes p_2^* T^*M_2\oplus \textstyle\bigwedge^2 p_2^* T^*M_2.
\end{equation*}
Then the equation $d\Omega=p^*_2\theta_2 \wedge \Omega$ gives, in the homogeneous parts $\bigwedge^2 p_1^*T^*M_1\otimes\bigwedge^1 p_2^*T^* M_2$ and $\bigwedge^3 p_1^* T^*M_1$:
\begin{equation}\label{we1}
\begin{split}
d_2\Omega_1+d_1\Omega_{12}&=p_2^*\theta_2\wedge \Omega_1\\
d_1\Omega_1&=0.
\end{split}
\end{equation} 
Let $n$ be the complex dimension of $M_1$. If we take the wedge of the first equation in \eqref{we1} with $\Omega_1^{n-1}$, we obtain:

\begin{equation}\label{we2}
\frac{1}{n}d_2(\Omega_1^n)+d_1(\Omega_{12}\wedge\Omega_1^{n-1})=p_2^*\theta_2\wedge\Omega_1^n.
\end{equation}

On the other hand, the compactness of $M_1$ implies that $p_2$ is a proper submersion, so it induces a push forward map on forms given by fiberwise integration:
\begin{align*}
(p_2)_*:\ce(\textstyle\bigwedge^{2n}p_1^*T^*M_1\otimes \textstyle\bigwedge^kp_2^*T^*M_2)&\rightarrow \ce( \textstyle\bigwedge^kT^*M_2)\\
((p_2)_*\al)_y&:= \int_{M_1\times\{y\}}\al,\ \  y\in M_2.
\end{align*}

Applying this map to \eqref{we2} and using Stokes' theorem, we obtain the following relation on $M_2$:

\begin{equation*}
 dg=n\theta_2g, \ \text{ where } \ g:=(p_2)_*\Omega_1^n\in\ce(M_2). 
\end{equation*}

This relation also reads $d_{n\theta_2}g=0$, with $g\neq 0$ since it is in fact everywhere positive. By \ref{ldtheta}  we obtain that $n\theta_2$ is exact, so also $\theta$ is and $\Omega$ is globally conformal to a \Ka\ metric.
\end{proof}

\begin{remark}
In \cite{opv}, the authors claim a proof of \ref{produs} also for the case when $M_1$ is a Riemann surface of genus $\geq 2$, but we believe that their argument does not hold. However, we are only able to find restrictions on the manifold $M_2$ under the hypothesis that $M_1\times M_2$ admits an LCK metric: \end{remark}

\begin{proposition}\label{iredcurve}
Let $M_1$ be a compact complex curve, let $M_2$ be a complex manifold and suppose that $M:=M_1\times M_2$ admits an LCK metric. Then $M_2$ admits an LCK metric with positive potential. 
\end{proposition}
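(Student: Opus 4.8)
The plan is to reduce the problem to the maximal-torus machinery of Theorem~\ref{crPotential} by exploiting the low-dimensional structure of $M_1$. Since $M_1$ is a compact complex curve, its genus $g$ dictates the available automorphisms. If $g=0$ then $M_1\cong\Pn{1}\CC$ is simply connected, and the product would fall under \ref{thetaz}, forcing $M$ to be of \Ka\ type, contradicting strictness; so one immediately reduces to the case where $M_1$ carries a nontrivial holomorphic vector field only when $g\le 1$. The crucial case is $g=1$, where $M_1$ is an elliptic curve $E=\CC/\Lambda$ and $\Aut_0(M_1)$ is itself the two-torus $E$ acting by translations. This gives a canonical compact torus $\TT_1=E\subset\Aut(M_1)$, and its Lie algebra $\Lt_1$ is a complex line, i.e. $\Lt_1\cap J\Lt_1=\Lt_1$ has real dimension $2$.

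The key step is to transport this torus to $M$ and run the positive-potential construction fiberwise. First I would observe that $\TT=\TT_1\times\{\id_{M_2}\}\subset\Aut(M)$ is a non-purely-real torus, since $\Lt\cap J\Lt\supseteq\Lt_1$ has real dimension $2$. After averaging the given LCK metric $(\Omega,\theta)$ over $\TT$ we may assume it is $\TT$-invariant; then for any invariant representative $\theta$ we have $\Lt_1$-components of $\theta$ that are constant along the orbits. The dichotomy is now exactly the one appearing in the proof of \ref{maxTor}: either $\Lt_1\subset\ker\theta$, in which case the orbit-translation action is $[\theta]$-horizontal, or some generator $\xi\in\Lt_1$ satisfies $\theta(\xi)\ne 0$, giving a vertical $\Ss^1$-action. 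In the vertical case, \ref{crPotential} applies directly on $M$ to produce an LCK metric with positive potential and Lee form in $[\theta]$; restricting this to a slice $\{x\}\times M_2\cong M_2$ should then yield the desired positive-potential metric on $M_2$, provided the potential restricts nontrivially.

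The horizontal case is where I expect the real work. Here $\Lt_1\subset\ker\theta$ means the elliptic fibers are isotropic (by \ref{isotropic}), and by \ref{thVa} the non-purely-real torus forces $M$ to be of Vaisman type; but a Vaisman structure on a product $M_1\times M_2$ with Lee field tangent to the $E$-factor would, via \cite[Theorem~5.1]{v80} and the closed-leaf theorem, make $E\times M_2$ a trivial elliptic principal bundle over $M_2$, contradicting \ref{principal} unless the structure degenerates. So I would argue that the horizontal case cannot occur under the hypothesis that $M$ admits an LCK metric, and hence one is always in the vertical situation. The cleanest route is: pick any $y\in M_2$; the restriction of $\theta$ to $M_1\times\{y\}\cong E$ is a class in $H^1(E,\RR)\cong\RR^2$, and if $M$ were strict LCK this restriction must be nonzero, because a vanishing restriction would place us in the hypotheses of \ref{thetaz} and force the \Ka\ type. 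Nonvanishing of $i^*[\theta]$ is precisely what guarantees that some translation generator is vertical.

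The main obstacle will be passing the positive-potential metric from $M$ down to the factor $M_2$. The construction in \ref{crPotential} produces a potential $g$ on the minimal \Ka\ cover $\hat M$ and a Lee form $\theta'=d\ln g\in\tau$; I must check that restricting the descended data to $\{x\}\times M_2$ keeps the positivity of the potential and the strictness of the Lee class. The delicate point is that the vertical direction generating the potential lives along the $E$-factor, whereas I need the potential on $M_2$; so I expect to instead use the $E$-invariance of the averaged metric to realize $M_2$ as a quotient or slice on which the averaged form $\Omega'=g^{-1}dd^c g$ restricts to an LCK form, and then verify that its potential $g|_{M_2}$ remains positive. Handling the $\Gamma$-equivariance carefully so that the restricted objects genuinely descend to $M_2$, rather than merely to a cover, is the step I would scrutinize most.
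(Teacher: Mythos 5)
Your proposal has a decisive gap: the entire strategy runs through a compact torus $\TT_1\subset\Aut(M_1)$, and such a torus exists only when $M_1$ has genus $1$. For genus $g\ge 2$ the tangent bundle of $M_1$ has negative degree, so $M_1$ carries no nonzero holomorphic vector field and $\Aut(M_1)$ is finite; there is literally nothing for \ref{thVa}, \ref{crPotential} or \ref{maxTor} to act on, and since $H^1(M_1,\RR)\neq 0$ the restriction $i^*[\theta]$ need not vanish, so \ref{thetaz} is unavailable as well. But $g\ge 2$ is precisely the case the proposition exists for (see the remark preceding it concerning \cite{opv}): for $g=0$, and for $g=1$ with $M_2$ compact, \ref{produs} already shows $M_1\times M_2$ admits no strict LCK metric at all, so in those cases the statement is vacuous. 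In the single remaining case your argument targets, namely $g=1$ with $M_2$ non-compact, the results you invoke are stated for compact manifolds: \ref{thVa} and \ref{crPotential} require $M$ compact (which fails when $M_2$ is non-compact), and \ref{principal} requires a compact base, so your elimination of the ``horizontal case'' and your production of a potential in the ``vertical case'' both rest on theorems that do not apply. The net effect is that the proposal establishes the proposition only where it is empty of content.

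The paper's proof is of an entirely different, symmetry-free nature, and this is what makes it work for every genus: write $\theta=\theta_1+\theta_2$ via K\"unneth, choose $\theta_1$ to be the real part of a holomorphic one-form on $M_1$ (this uses only that a compact curve is \Ka, so $dJ\theta_1=0$), decompose $d\Omega=\theta\wedge\Omega$ and its image under $J$ into bigraded pieces with respect to $p_1^*T^*M_1\oplus p_2^*T^*M_2$, and apply the fiber integration $(p_2)_*$ together with Stokes' theorem. This produces the strictly positive function $g=(p_2)_*\Omega_1$ on $M_2$ and the strictly positive $(1,1)$-form $\eta=(p_2)_*(\Omega_2\wedge\theta_1\wedge J\theta_1)$ satisfying $\eta=d_{\theta_2}d^c_{\theta_2}g$, which is exactly an LCK metric with positive potential on $M_2$. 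Any repair of your approach would have to replace the torus machinery by an argument that survives the absence of continuous symmetries of $M_1$, which is essentially what this integration-over-the-fiber computation accomplishes.
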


\begin{proof}
We keep the same notations as before. If $(\Omega,\theta)$ is the LCK form on $M$, we can suppose that $\theta=\theta_1+\theta_2$ with each $\theta_i$ being the pullback of a closed one form from $M_i$, $i=1,2$. Moreover, up to a conformal change of $\Omega$, as $M_1$ is \Ka ian, we can choose $\theta_1$ to be the real part of a holomorphic one form, so that $dJ\theta_1=0$, where $J$ is the product complex structure on $M$.

As before, on the $\bigwedge^2 T^*M_1\otimes \bigwedge^1 T^*M_2$-part, $d\Omega=\theta\wedge\Omega$ gives:
\begin{equation}\label{jeq1}
d_1\Omega_{12}+d_2\Omega_1=\theta_2\wedge\Omega_1+\theta_1\wedge\Omega_{12}.
\end{equation}

Extend $J$ as a derivation acting on forms, and let $d^c=i(\cp-\partial)$. Then, on $M$ we have the commutation relation: 
\begin{equation}\label{com}
[J,d]=d^c.
\end{equation}

The formula $Jd\Omega=J(\theta\wedge\Omega)$, together with $J\Omega=0$ and \eqref{com} gives, on the $\bigwedge^1 T^*M_1\otimes \bigwedge^2 T^*M_2$-part:
\begin{equation}\label{jeq2}
d_2^c\Omega_{12}+d_1^c\Omega_2=J\theta_1\wedge\Omega_2+J\theta_2\wedge\Omega_{12}.
\end{equation} 

Now we apply the push forward map $(p_2)_*$ to equation \eqref{jeq1} and Stoke's theorem in order to obtain:
\begin{align*}
(p_2)_*d_2\Omega_1=(p_2)_*(\theta_2\wedge\Omega_1+\theta_1\wedge\Omega_{12}).
\end{align*}
If we denote by $g$ the strictly positive function on $M_2$ given by $(p_2)_*\Omega_1$, this also reads:
\begin{equation}\label{jeq13}
d_{\theta_2}g=(p_2)_*(\theta_1\wedge\Omega_{12}).
\end{equation} 
We apply $d^c$ to this identity and use equation \eqref{jeq2} together with \eqref{jeq13} to get:
\begin{align*}
d^cd_{\theta_2}g&=-(p_2)_*(\theta_1\wedge d_2^c\Omega_{12})\\
&=-(p_2)_*(\theta_1\wedge J\theta_1\wedge\Omega_2)+J\theta_2\wedge d_{\theta_2}g+(p_2)_*(\theta_1\wedge d_1^c\Omega_2).
\end{align*}
Since we chose $\theta_1$ so that $dJ\theta_1=0$, equation \eqref{com} implies that $d_1^c\theta_1=0$, hence the above simply gives:
\begin{equation*}
d^cd_{\theta_2}g-J\theta_2\wedge d_{\theta_2}g=-(p_2)_*(\Omega_2\wedge \theta_1\wedge J\theta_1).
\end{equation*}

Note that $\al:=\Omega_2\wedge\theta_1\wedge J\theta_1$ is a semipositive 
$(2,2)$-form on $M$ which is strictly positive on a non-empty open subset of $M$ of the form $U\times M_2$,  where $U\subset M_1$ is the open set where $\theta_1$ does not vanish. Then $\eta:=(p_2)_*\al$ is a strictly positive $(1,1)$-form on $M_2$ verifying:
\begin{equation}\label{jeq14}
\eta=d_{\theta_2}d_{\theta_2}^cg.
\end{equation}
Finally, this implies that $\eta=-dJ\omega+\omega\wedge J\omega$, where $\omega:=\theta_2-d\ln g$, so $(\eta,\omega)$ is an LCK metric with positive potential on $M_2$. \end{proof}

\end{document}